\documentclass[10pt,reqno]{amsart}
\usepackage{hyperref}
\usepackage{latexsym}
\usepackage{amsmath}
\usepackage{enumerate}
\usepackage{amsfonts}
\usepackage{amssymb}
\usepackage{latexsym}
\usepackage{fixmath}
\usepackage[usenames,dvipsnames]{color}
\usepackage{mathdots}
\usepackage{multicol}
\usepackage{graphicx}
\usepackage{xcolor}
\usepackage{subcaption} 
\usepackage[boxed, linesnumbered, noend, noline]{algorithm2e}
\usepackage{comment}
\usepackage{float}
\usepackage{mathtools}
\usepackage{amsmath}
\usepackage{dsfont}

\usepackage{todonotes}

\usepackage[T1]{fontenc}
\usepackage{fourier}
\usepackage{bbm}
\usepackage{color}
\usepackage{fullpage}

\usepackage{cleveref}

\numberwithin{equation}{section}

\newcommand\T{\vec T}

\newcommand\Var{\mathrm{Var}}

\newcommand{\Erdos}{Erd\"os}
\newcommand{\Renyi}{R\'enyi}

\renewcommand\Pr{\pr}

\newtheorem{definition}{Definition}[section]
\newtheorem{claim}[definition]{Claim}

\newtheorem{remark}[definition]{Remark}
\newtheorem{theorem}[definition]{Theorem}
\newtheorem{lemma}[definition]{Lemma}
\newtheorem{proposition}[definition]{Proposition}
\newtheorem{problem}[definition]{Problem}
\newtheorem{corollary}[definition]{Corollary}

\newtheorem{observation}[definition]{Observation}

\newcommand{\supp}{{\textsc{Supp}}}

\def\I{{\mathcal I}}

\def\L{{\mathcal L}}

\def\T{{\mathcal T}}

\newcommand{\trp}[2]{{#1}^{\!\top}{\!#2}}
\newcommand{\CF}[1]{\varphi_{#1}(\mathbf{t})}

\newcommand{\CFground}[1]{\varphi_{0}(\mathbf{t})}

\renewcommand{\d}{\mathrm{d}}

\newcommand{\gauss}[1]{\mathcal{N}(0, #1)}

\newcommand{\dtv}[2]{\mathrm{d}_{\text{TV}}\left(#1, #2\right)}
\newcommand{\dtvnop}[2]{\mathrm{d}_{\text{TV}}(#1, #2)}
\newcommand{\dchisq}[2]{\chi^2(#1, #2)}
\renewcommand{\Pr}[1]{\mathbb{P}\left( #1 \right)}
\newcommand{\Prnop}[1]{\mathbb{P}( #1 )}

\newcommand{\Expected}[1]{\mathbb{E}\left[ #1 \right]}
\newcommand{\Expectednop}[1]{\mathbb{E}[ #1 ]}
\newcommand{\Expectedsub}[2]{\mathbb{E}_{#1}\left[ #2 \right]}
\newcommand{\Expectedsubnop}[2]{\mathbb{E}_{#1}[ #2 ]}
\newcommand{\Expectedsubb}[2]{\underset{#1}{\mathbb{E}}\left[ #2 \right]}

\newcommand{\erdos}{Erd\H{o}s}
\newcommand{\renyi}{R\'enyi}
\newcommand{\ER}{\erdos--\renyi}

\newcommand{\SW}[1]{ \textsc{Sw}(#1) }

\newcommand{\He}{\mathsf{He}}

\newcommand{\Bern}[1]{\mathsf{Bern}(#1)}

\newcommand{\wishart}{\mathbb{W}(n, m, p, d)}
\newcommand{\wishartmasked}{\mathbb{W}_{\mathbf{M}}(n, m, p, d)}

\newcommand{\matnoiseedge}{\mathbb{W}(n, m, q, p, d)}
\newcommand{\matnoiseknown}{ \mathbb{W}_{\textbf{M}}(n, m, q, p, d)}

\newcommand{\gaussmat}{\mathbb{M}(n, m, p)}
\newcommand{\gaussmatarg}[1]{\mathbb{M}(#1)}

\newcommand{\bound}{nmq^4\log(n)}
\newcommand{\boundwedges}{m\sqrt{n}q^2 \log(n)}

\newcommand{\boundknown}{nmq^2\log(n)}
\newcommand{\boundwedgesknown}{m\sqrt{n}q \log(n)}

\newcommand{\boundlower}{nmq^4}
\newcommand{\boundwedgeslower}{m\sqrt{n}q^2  }

\newcommand{\boundknownlower}{nmq^2}
\newcommand{\boundwedgesknownlower}{m\sqrt{n}q }

\newcommand{\sighat}{\widehat{\sigma}}

\newcommand{\xr}{ \mathbf{X}_{\text{R}} }
\newcommand{\xl}{ \mathbf{X}_{\text{L}} }
\newcommand{\masknum}[1]{ \mathbf{M}^{(#1)} }
\newcommand{\xrnum}[1]{ \mathbf{X}_{\text{R}}^{(#1)} }

\newcommand{\leading}{ \Lambda_\alpha(\xr) }
\newcommand{\rest}{ r_\alpha(\xr) }
\newcommand{\leadingnum}[1]{ \Lambda_\alpha(\xrnum{#1}) }
\newcommand{\leadingnumalpha}[2]{ \Lambda_{#2}(\xrnum{#1}) }
\newcommand{\restnum}[1]{ r_\alpha(\xrnum{#1}) }

\newcommand{\latentstuff}{\xrnum{1}, \xrnum{2} \sim \Seventshort}
\newcommand{\latentstuffind}{\xrnum{1}, \xrnum{2} }
\newcommand{\SWnum}[2]{\textsc{Sw}^{(#2)}(#1)}

\newcommand{\onebutnotactuallyone}{\mathbf{1}_{h}(\mathbf{t}(e))}

\newcommand{\phiterm}[1]{\Phi_{#1}^{(r_1, \ldots, r_k)}(\mathbf{t})}
\newcommand{\phitermell}[1]{\Phi_{#1}^{(r_1, \ldots, r_\ell)}(\mathbf{t})}

\newcommand{\Sevent}{S_\rho}
\newcommand{\Seventshort}{S_\rho}
\newcommand{\Seventdef}{\left\{ \ \mathbf{X} \in \mathbb{R}^{n \times d} \text{ with columns } (\mathbf{x}_u)_{u \in [n]} \text{ such that } \bigg| \tfrac{1}{d} \langle \mathbf{x}_u, \mathbf{x}_v \rangle - \mathbf{I}_{u,v} \bigg| \le \frac{\rho}{\sqrt{d}}  \ \text{ for all } u, v \in [n] \ \right\}}

\newcommand{\leadinghat}{\widehat{\Lambda}_\alpha( \xr, \mathbf{t})}
\newcommand{\resthat}{\widehat{R}_\alpha(\xr, \mathbf{t})}

\newcommand{\Wshort}{\mathbb{W}}
\newcommand{\WshortM}{\mathbb{W}_{\textbf{M}}}
\newcommand{\Mshort}{\mathbb{M}}

\usepackage{thmtools} 
\usepackage{thm-restate}
\usepackage{graphicx}

\makeatletter
\newcommand{\definetitlefootnote}[1]{%
	\newcommand\addtitlefootnote{%
		\makebox[0pt][l]{$^{*}$}%
		\footnote{\protect\@titlefootnotetext}
	}
    \newcommand\@titlefootnotetext{\spaceskip=\z@skip $^{*}$#1}%
}
\makeatother

\begin{document}
\bibliographystyle{plainurl}
\definetitlefootnote{blah}
\title{Information-Theoretic Thresholds for Bipartite Latent-Space Graphs Under Noisy Observations}
\author{Andreas G\"obel, Marcus Pappik, Leon Schiller}

\address{Andreas G\"obel, {\tt andreas.goebel@hpi.de}, Hasso Plattner Insitute, University of Potsdam, Prof.-Dr.-Helmert-Str. 2-3, 14482 Potsdam, Germany.}
\address{Marcus Pappik, {\tt marcus.pappik@hpi.de}, Hasso Plattner Insitute, University of Potsdam, Prof.-Dr.-Helmert-Str. 2-3, 14482 Potsdam, Germany.}
\address{Leon Schiller, {\tt leon.schiller@hpi.de}, Hasso Plattner Insitute, University of Potsdam, Prof.-Dr.-Helmert-Str. 2-3, 14482 Potsdam, Germany.}
\maketitle
\begin{abstract}
    We study information-theoretic phase transitions for the detectability of latent geometry in bipartite random geometric graphs (RGGs) with Gaussian, $d$-dimensional latent vectors, while only a subset of edges carries latent information, determined by a random mask with i.i.d. $\Bern{q}$ entries. For any fixed edge density $p \in (0,1)$, we determine essentially tight thresholds for this problem as a function of $d$ and $q$. Our results show that the detection problem is substantially easier if the mask is known up-front, compared to the case where the mask is hidden.

    Our analysis is built upon a novel Fourier-analytic framework for bounding signed subgraph counts in Gaussian random geometric graphs that exploits cancellations which arise after approximating characteristic functions by an appropriate power series. The resulting bounds are applicable to much larger sub-graphs than considered in previous work, which enables tight information-theoretic bounds, while the bounds considered in previous works only lead to lower bounds from the lens of low-degree polynomials. As a consequence, we identify the optimal information-theoretic thresholds and rule out computational–statistical gaps. Our bounds further improve upon the bounds on Fourier coefficients of random geometric graphs recently given by Bangachev and Bresler [STOC'24] in the dense bipartite case. The techniques extend to sparser and non-bipartite settings as well, at least if the considered sub-graphs are sufficiently small. We further believe that they might help resolve open questions for related detection problems.
\end{abstract}
\section{Introduction }\label{sec:intro}

Latent geometric structure is a common feature in large-scale datasets appearing in various domains including data science~\cite{erba2020random}, statistical physics \cite{gorban2018blessing} or biological sciences~\cite{10002010map}. Random geometric graphs (RGGs) where vertices are represented by points randomly distributed in some latent geometric space and connected as a function of their distance, provide a natural way of modeling such high-dimensional datasets. When the latent geometric space incorporates a natural measure of dimensionality and suitable symmetries, RGGs have been shown to converge (in total variation distance) to \ER{} graphs\footnote{In the \ER{} model edges are drawn independently with probability $p\in(0,1)$.} as the dimension $d$ tends to infinity~\cite{Devorye2011HD}. This prompts the fundamental question of finding the precise regimes of parameters $d, n,$ and edge-density $p$ in which RGGs are distinguishable from \ER{} random graphs of the same edge-density, or as Duchemin and De Castro \cite{duchemin2023random} state it: ``determine the point where the geometry is lost in dimension''.

In this direction, one of the most canonical and widely-studied models is that of \emph{Gaussian RGGs}, which is obtained by associating a latent vector $\mathbf{x}_v$ to every vertex $v$ which is independently drawn from the standard $d$-dimensionals Gaussian distribution $\mathcal{N}(0, \mathbf{I}_d)$. Edges are inserted whenever the inner product $\langle \mathbf{x}_u, \mathbf{x}_v \rangle$ exceeds (or undershoots) a certain threshold $\tau$. 
The resulting adjacency matrix can also be seen as a discretized Wishart matrix, and the model further shares strong similarities with spherical RGGs where vertices are represented by points on $\mathbb{S}^{d-1}$, i.e. the surface of the $d$-dimensional sphere, drawn from the Haar measure. 

In the dense regime, where the edge-density $p\in(0,1)$ is fixed, it is known that Gaussian RGGs become information-theoretically indistinguishable from \ER{} once $d\gg n^3$, while $d\ll n^3$ ensures existence of a simple, efficient test, formed by \emph{signed triangles} ~\cite{bubeck2016testing,jiang2015approximation,racz2019smooth}.
However, understanding variations of this problem that involve some kind of \emph{sparsity} or additional \emph{noise} have turned out to be much more challenging. In this direction, a series of recent works was concerned with the sparse case (where $p = o(1)$), and showed that the thresholds for detectability shift as a function of $p$ \cite{liu2022testing, brennan2020phase, bangachev2024fourier}. Nonetheless, obtaining tight information-theoretic thresholds for all $p$ is still a striking open problem. In a different direction, the works \cite{liu2023noisy, liuracz, brennan2021finetti} have considered other notions of sparsity where $p$ is a fixed constant, while \emph{only a fraction of edges} carries latent information and the other entries are i.i.d. $\Bern{p}$ random variables\footnote{This exact setting is studied in \cite{liu2023noisy}, while the work \cite{brennan2021finetti} studies a related question for wishart matrices, and \cite{liuracz} considers different notions of noise, parameterized by the ``smoothness'' of the connection function}. However, like in the case of $p=o(1)$, finding the precise information-theoretic thresholds for all levels of sparsity/noise is still an open problem, as the bounds given in \cite{liu2023noisy} leave some gaps open \footnote{The bounds given in \cite{brennan2021finetti} are tight, but the model is continuous and assumes that the ``masked'' edges are known up-front, which is in contrast to \cite{liu2023noisy}.}.

In this work, we make progress along the latter line of research, by giving \emph{tight information theoretic thresholds} in an important special case of the models considered in previous work, formed by \emph{bipartite Gaussian RGGs}. In this setting, we close the gaps left open in \cite{liu2023noisy}, and our bounds further reveal striking differences between the setting where edges that carry latent information are revealed up-front (as considered in a similar form in \cite{brennan2021finetti}, also for the bipartite case), and the case where this information is concealed, i.e., not accessible for the testing procedure (like in \cite{liu2023noisy}). Our results further allow us to rule out the existence of computational statistical gaps for all ranges of parameters in our model. Moreover, we hope that the techniques introduced here extend beyond the scope of this work, and believe that they might also yield a better understanding of other variations of our detection problem, such as the case of $p = o(1)$.


\subsection{The model and associated testing problems}
We proceed by introducing the concrete testing problems we will be working with. 
To this end, denote by $\wishart$ the following distribution over $n \times m$ matrices with entries in $\{0,1\}$, where we refer to the set of rows as $R$ and to the set of columns as $L$, with $|R| = n$ and $|L| = m$. Without loss of generality, we further assume throughout that $m \ge n$. 
\begin{definition}[The distributions $\wishart$ and $\gaussmat$]
    We let $\wishart$ be the distribution over $n \times m$ matrices with entries in $\{0,1\}$ obtained as follows. Sample independent random vectors $\xr = (\mathbf{x}_{u})_{u \in R}$ and $\xl = (\mathbf{x}_{u})_{u \in L}$ from the standard $d$-dimensional Gaussian distribution $\mathcal{N}(0, \mathbf{I}_d)$.
    Define further the threshold $\tau = \tau(d, p)$ such that for any $u \in L, v \in R$, we have $
        \Prnop{d^{-1/2}\langle \mathbf{x}_u, \mathbf{x}_v \rangle \le \tau} = p.
    $
    Then, after drawing $(\mathbf{x}_{u})_{u \in R}$ and $(\mathbf{x}_{u})_{u \in L}$,  determine each entry of $W\sim \wishart$ by thresholding\footnote{Instead of thresholding using the indicator $d^{-1/2}\langle \mathbf{x}_u, \mathbf{x}_v \rangle \le \tau$, we could also use $d^{-1/2}\langle \mathbf{x}_u, \mathbf{x}_v \rangle \ge \tau$ instead and our results would remain valid, up to some flipped signs. } $$
        W_{u, v} \coloneqq \mathds{1}( d^{-1/2} \langle \mathbf{x}_u, \mathbf{x}_v \rangle \le \tau).
    $$
    Moreover, we define $\ \gaussmat$ as as the distribution over $n \times m$ matrices with entries in $\{0,1\}$ where each entry is an i.i.d. sample from the $\Bern{p}$ distribution.
\end{definition} 
Note that in $W \sim \wishart$, each entry is marginally a $\Bern{p}$ random variable just like in $\gaussmat$, however the difference is that there are intricate dependencies within $W \sim \wishart$ as all the entries depend on the same underlying latent information, whose strength crucially depends on the dimension $d$. A matrix $W \sim \wishart$ can also be seen as the adjacency matrix of a \emph{bipartite Gaussian RGGs} with edge-density $p$, while $M \sim \gaussmat$ is distributed like the adjacency matrix of a bipartite \ER{} random graph.

\begin{figure}
    \includegraphics[width=.95\linewidth]{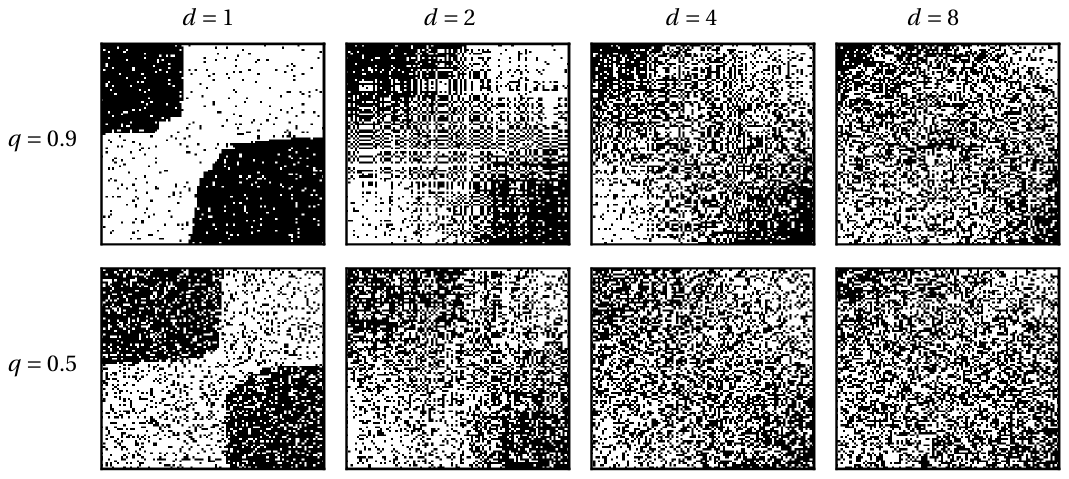}
    \caption{Illustration of the matrices sampled from $\matnoiseedge$ for different $d$ and $q$. The rows and columns are ordered by the first coordinate of the latent vectors. Note that in our problem, an algorithm would not have access to this information, instead the rows and columns would be given in a random permutation of the matrices shown above. We sorted the rows and columns only for the sake of visualization.}
    \label{fig:matrix}
\end{figure}

As briefly mentioned earlier, previous works (see for example \cite{bubeck2016testing, brennan2021finetti}) have been concerned with understanding for which values of $d$ (as a function of $n$), a matrix $W$ sampled from $\wishart$ and closely related distributions can be distinguished from a typical sample $M \sim \gaussmat$. In this regard, it follows from the work of Brennan, Bresler and Huang \cite[Corollary 4.3]{brennan2021finetti} that the total variation distance $\dtv{\wishart}{\gaussmat} \rightarrow 0$ if $d \gg nm$ while $\dtv{\wishart}{\gaussmat} \rightarrow 1$ if $d \ll nm$ and $p \in (0,1)$ is any fixed constant. 

\textbf{Introducing sparsity via random masks of density $q$.}
While this result fully characterizes the problem in terms of its feasibility as a function of $d$, it is much more challenging to find the right thresholds if the considered matrix additionally exhibits some form of \emph{sparsity} in the sense that only a small fraction of entries actually carries information about the underlying latent vectors.
One of the most natural ways of introducing sparsity to the above problem is the so called \emph{masked model}, as also introduced in \cite{brennan2021finetti}. 
Here, we we are concerned with detecting dependence within a matrix $M$ while being given as input a \emph{mask} $\ \mathbf{M} \sim \gaussmatarg{n, m, q}$ and the matrix $\mathbf{M} \odot M$ (where $\odot$ denotes the Hadamard product). The goal is to distinguish the following two hypotheses.
$$
    H_0: M  \sim \gaussmat \hspace{.5cm} \text{ and } \hspace{.5cm} H_1: M \sim \wishart.
$$
Accordingly, any testing procedure can only observe the subset of entries in $M$ that is not ``hidden'' by the mask $\mathbf{M}$ (i.e. those entries that correspond to $\mathbf{M}_{u,v} = 1$). Therefore, the masked model introduces a new parameter $q$ that controls the number of observable entries (or the sparsity of the resulting masked matrix $\mathbf{M} \odot M$) and heavily influences the resulting thresholds for distinguishability. Tight information-theoretic thresholds for testing in this masked setting as a function of $d$ and $q$ were given in \cite{brennan2021finetti} for a \emph{continuous} version of the above problem, where a Wishart matrix is to be distinguished from a matrix with i.i.d. Gaussian entries.   

\textbf{Testing with known and unknown masks.}
The masked setting considered so far assumes that the mask $\bf{M}$ is given explicitly, which means that a testing procedure ``knows'' which entries in the input $M$ carry latent information and which do not. It is very natural to ask what happens if this is not the case, i.e., if the mask $\mathbf{M}$ is not given explicitly such that one cannot clearly separate the masked and non-masked edges a-priori. To introduce this new setting formally and compare it to the original masked model, we define the following two distributions. 
\begin{definition}[The distributions $\matnoiseedge$ and $\wishartmasked$]
    Given a matrix $\mathbf{M} \in \{0,1\}^{ n \times m }$,
    define the distribution $\wishartmasked$ as the distribution over matrices $M\in \{0,1\}^{n \times m}$ obtained by sampling $W \sim \wishart$, $B \sim \gaussmat$ and then setting $$
        M \coloneqq W \odot \mathbf{M} + B \odot(1- \mathbf{M}),
    $$
    where $\odot$ denotes the Hadamard product.
    Moreover, define $\matnoiseedge$ as the distribution $\wishartmasked$ resulting after drawing $\mathbf{M}$ from $\gaussmatarg{n,m,q}$, i.e., $$
        \matnoiseedge = \Expectedsub{\mathbf{M} \sim \gaussmatarg{n,m,q}}{\wishartmasked}.
    $$
\end{definition}
Accordingly, a sample $M \sim \matnoiseedge$ is obtained by choosing a matrix $W \sim \wishart$ and a mask $\mathbf{M} \sim \gaussmatarg{n, m, q}$, and subsequently re-randomizing each entry $M_{u,v}$ corresponding to $\mathbf{M}_{u,v} = 0$ by replacing it with an independent sample from $\Bern{p}$. This re-randomization step serves the purpose of ``hiding'' the mask $\mathbf{M}$ by ensuring that every entry in $M$ has the same marginal distribution. This can equivalently be seen as a form of introducing noise, and it reproduces the setting considered in \cite{liu2023noisy} for (non-bipartite) RGGs on the sphere.

With this, our testing problems for known and unknown masks can be described as follows.
\begin{problem}[Distinguishing $\wishart$ and $\gaussmat$ for unknown masks]\label{prob:testingunknown}
    Given a matrix $M$, distinguish the following two hypotheses. 
    \begin{align*}
        H_0: M \sim \gaussmat \hspace{.3cm} \text{ and } \hspace{.3cm} H_1: M \sim \matnoiseedge.
    \end{align*}
\end{problem}
\begin{problem}[Distinguishing $\wishart$ and $\gaussmat$ for known masks]\label{prob:testingknown}
    Given a matrix $M$ and a mask $\mathbf{M} \sim \gaussmat$, distinguish the following two hypotheses. 
    \begin{align*}
        H_0: M \sim \gaussmat \hspace{.3cm} \text{ and } \hspace{.3cm} H_1: M \sim \wishartmasked.
    \end{align*}
\end{problem}
Note that the above testing problem for known masks is introduced slightly differently than in the previous paragraph in the sense that the entries ``hidden'' by the mask are independent $\Bern{p}$ random variables in both hypotheses, while previously they were set to zero under both $H_0$ and $H_1$. However, it is not hard to see that from the lens of distinguishability, the two settings are equivalent, assuming that the mask is part of the input. Our way of stating \Cref{prob:testingknown} and \Cref{prob:testingunknown} has the advantage that it allows us to to compare the the two settings more easily.
To get an intuition for how the dependencies within a sample from $\matnoiseedge$ behave and in particular become weaker as $d$ grows and $q$ tends to zero, we refer to \Cref{fig:matrix}. 

\subsection{Results}

\begin{figure}
\centering
\begin{subfigure}[c]{0.35\textwidth}
\centering
\includegraphics[width=1\textwidth]{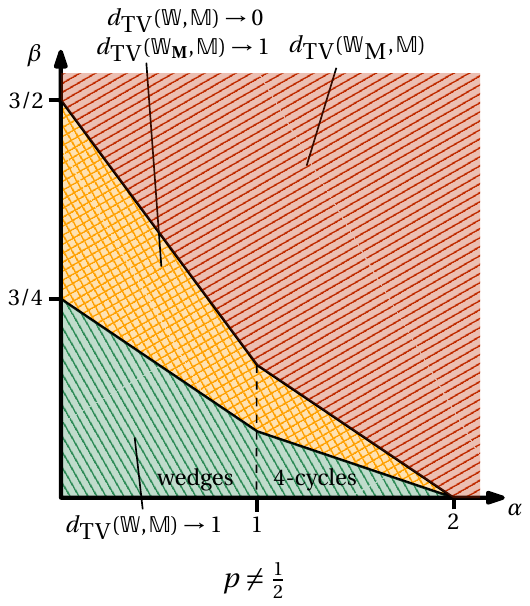}
\end{subfigure}
\begin{subfigure}[c]{0.05\textwidth}
\centering
\hspace{1.3cm}
\end{subfigure}
\begin{subfigure}[c]{0.35\textwidth}
\centering
\includegraphics[width=1\textwidth]{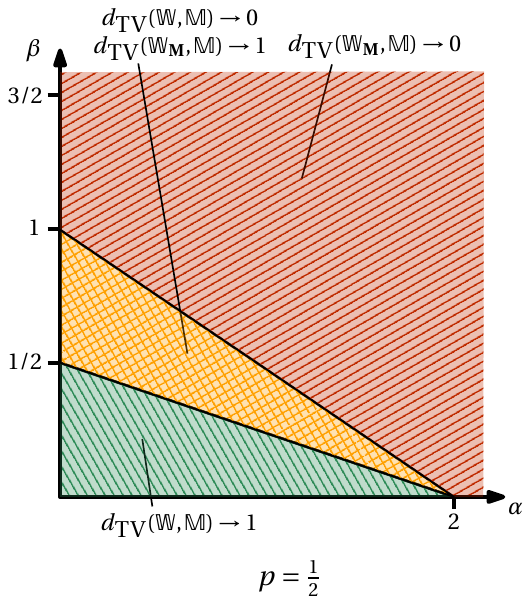}
\end{subfigure}
\captionsetup{singlelinecheck=off}
\caption[foo bar]{Phase diagrams for $q = n^{-\beta}$, $d = n^{\alpha}$, and $n = m$. 
The colors represent the following regimes:
\begin{itemize}
    \item[{\includegraphics[width=0.9em]{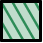}}] In the green regime (falling pattern), $\matnoiseedge$ and $\gaussmat$ are distinguishable, even if the mask is not given. Efficient tests are given by counting signed wedges and signed 4-cycles.
    \item[{\includegraphics[width=0.9em]{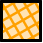}}] In the yellow regime (rising and falling pattern), the two models are indistinguishable if the mask is unknown but distinguishable if it is known. Efficient tests are again given by counting signed wedges and signed 4-cycles, but restricted to the mask.
    \item[{\includegraphics[width=0.9em]{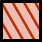}}] In the red regime (rising pattern), the models are indistinguishable even if the mask is given.
\end{itemize} 
For $p \neq \frac{1}{2}$, counting signed wedges supersedes counting signed $4$-cycles as an optimal test statistic for $\alpha \le 1$ (vertical dashed line).
In contrast, for $p=\frac{1}{2}$, counting signed wedges has no statistical power, and the testable regime is purely determined by the statistical power of signed $4$-cycles.
} \label{fig:diagram}
\end{figure}

The main result of this work is the establishment of precise, information-theoretic thresholds for the feasibility of \Cref{prob:testingknown} and \Cref{prob:testingunknown} that are tight up to logarithmic factors.
Before stating them formally, we introduce some further notation. To this end, we remark that we write $f \gg g$ for functions $f(n), g(n)$ whenever $\lim_{n \to \infty} f(n)/g(n) = \infty$. Furthermore, we use standard Landau-notation to characterize the asymptotic behavior of functions, and we denote graphs by lower-case Greek letters. A graph $\alpha$ is represented as a set of edges such that $|\alpha|$ denotes the number of edges, while $V(\alpha)$ is the corresponding set of vertices and $|V(\alpha)|$ is its cardinality.

With this, our first result concerns the feasibility of \Cref{prob:testingunknown} and reads as follows. Note that we assume $d \gg \log(n)^3$ throughout, simply because it simplifies some of our arguments, in particular related to the algorithmic upper bounds. However, we strongly believe that this assumption can be removed at the expense of a higher technical complexity.
\begin{theorem}[Information-theoretic thresholds for unknown masks]\label{thm:resultunknownmask}
    Consider any fixed $p \in (0,1)$. Then, the following holds whenever $d \gg \log(n)^3$.
    
    \vspace{0.5em}
    If $p \neq \frac{1}{2}$ then:\hspace{1em} $\begin{aligned}[t]
            \dtv{\matnoiseedge}{ \gaussmat} &= \begin{cases} 1 - o(1) & \text{if } d \ll \boundlower  \hspace{1.3cm} \text{ or } \hspace{.1cm} d \ll \boundwedgeslower \\
            o(1) & \text{if } d \gg \bound \hspace{.1cm} \text{ and } \hspace{.1cm} d \gg \boundwedges
            \end{cases}
        \end{aligned}$
        
    \vspace{0.5em}
    If $p = \frac{1}{2}$ then:\hspace{1em} $\begin{aligned}[t]
            \dtv{\matnoiseedge}{ \gaussmat} &= \begin{cases} 1 - o(1) & \text{if } d \ll \boundlower \\
            o(1) & \text{if } d \gg \bound
            \end{cases}
        \end{aligned}$

    \vspace{0.5em}
    \noindent Here, $o(1)$ denotes a function that tends to zero as $n \rightarrow \infty$. Recall further that $m \ge n$.
\end{theorem}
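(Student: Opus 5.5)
The proof splits into a distinguishability part (upper bounds via explicit tests) and an indistinguishability part (lower bounds via a second-moment argument). I would use centered entries $\bar M_{u,v} = M_{u,v} - p$.

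\textbf{Upper bounds.} I would use two statistics: signed 4-cycles
\[
T_4 = \sum_{u\neq u' \in R,\, v\neq v'\in L} \bar M_{u,v}\bar M_{u,v'}\bar M_{u',v}\bar M_{u',v'},
\]
and, for $p\neq\tfrac12$, signed wedges centered at the smaller side $R$,
\[
T_2=\sum_{u\in R}\sum_{v\neq v'}\bar M_{u,v}\bar M_{u,v'}.
\]
Under $H_0$ both statistics have mean zero, and $\mathrm{Var}(T_4)\asymp n^2m^2$ and $\mathrm{Var}(T_2)\asymp nm^2$.

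Under $H_1$ each edge survives the mask with probability $q$. A 4-cycle therefore contributes $q^4$ times the signed-cycle expectation of the geometric graph. Using a Hermite expansion of the threshold indicator, with leading coefficient $c_1(p)\neq 0$, the 4-cycle gives roughly $\mathbb{E}\,[\langle x_u,x_v\rangle\cdots]/d^2 \asymp d^{-1}$. Hence $\mathbb{E}T_4\asymp n^2m^2q^4/d$. A wedge involves the second Hermite coefficient, which is nonzero iff $p\neq\tfrac12$; by the symmetry of the inner product it vanishes at $p=\tfrac12$. This gives $\mathbb{E}T_2\asymp nm^2q^2/d$.

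Chebyshev's inequality then gives a test that succeeds when $n^2m^2q^4/d\gg nm$, i.e. $d\ll nmq^4$, and when $nm^2q^2/d\gg m\sqrt n$, i.e. $d\ll m\sqrt n q^2$. This requires bounding $\mathrm{Var}_{H_1}$, which I would do by expanding over pairs of cycles/wedges and noting that overlapping pairs contribute at most the null variance plus lower-order terms. The assumption $d\gg\log^3 n$ keeps the Hermite remainders controlled.

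\textbf{Lower bounds.} Since $\mathrm{d_{TV}}\le\tfrac12\sqrt{\chi^2}$, I would bound $\chi^2(\matnoiseedge,\gaussmat)$. I would work in the Fourier–Walsh basis of $\{0,1\}^{n\times m}$ under $\mathrm{Bern}(p)$: writing $\chi_\alpha=\prod_{e\in\alpha}\bar M_e/\sqrt{p(1-p)}$, we have
\[
1+\chi^2=\sum_\alpha \mathbb{E}_{H_1}[\chi_\alpha]^2 .
\]
Masking multiplies each coefficient by $q^{|\alpha|}$, so
\[
\chi^2=\sum_{\alpha\neq\emptyset} q^{2|\alpha|}\,\widehat{W}(\alpha)^2,
\]
where $\widehat W(\alpha)$ is the signed subgraph count expectation of the unmasked bipartite Gaussian RGG. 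The key estimate is a bound of the form
\[
|\widehat W(\alpha)|\le \bigl(C|\alpha|/\sqrt d\bigr)^{|\alpha|-|V(\alpha)|+\kappa(\alpha)}\cdot(\text{extra } d^{-1/2}\text{ factors for leaves when } p=\tfrac12),
\]
where $\kappa$ accounts for vertices of degree one. Vertices of degree one kill the coefficient entirely when $p=\tfrac12$, and give a factor $d^{-1/2}$ per pair otherwise.

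I would prove this by conditioning on $\xr$ on the typical event $\Sevent$. Given $\xr$, the column vectors are independent, so $\widehat W(\alpha)$ factorizes over the $L$-vertices into products of Gaussian orthant-type probabilities. Each factor is written through its characteristic function and expanded as a power series in the off-diagonal Gram entries, which are $O(\rho/\sqrt d)$. Cancellations (centering) remove all low-order terms, leaving one $d^{-1/2}$ per independent cycle/edge excess. Summing $q^{2|\alpha|}\widehat W(\alpha)^2$ over all $\alpha$, grouped by $(|\alpha|,|V(\alpha)|)$ and using at most $n^{a}m^{b}$ embeddings, gives a geometric series. Its ratio is $nmq^4\log n/d$ for cycle-rich parts and $m\sqrt n q^2\log n/d$ for wedge/star parts, so it is $o(1)$ under the stated conditions. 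The logarithms come from the $|\alpha|^{|\alpha|}$ combinatorial factors, with sizes truncated at about $\log n$ via tail bounds. The contribution from outside $\Sevent$ is negligible via a conditioning argument.

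\textbf{Main obstacle.} The hard step is the uniform coefficient bound for large $\alpha$ (size up to polylog or beyond). Naive Hermite/Wick expansions lose factorials, so I would first try the characteristic-function power series with explicit cancellation bookkeeping, which is exactly the Fourier-analytic framework the abstract announces.
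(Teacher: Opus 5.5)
Your distinguishability half (signed $4$-cycles, plus signed wedges centred in $R$ when $p\neq\tfrac12$, with Chebyshev) is the same as the paper's. The indistinguishability half has a genuine gap, in two places.

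\textbf{The coefficient estimate has the wrong exponent.} Your key estimate $|\widehat W(\alpha)|\le (C|\alpha|/\sqrt d)^{|\alpha|-|V(\alpha)|+\kappa(\alpha)}$ fails in both directions.

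For $\alpha=C_4$ the exponent is $0$, so you get no decay at all. But the $nmq^4$ threshold comes precisely from the true order $1/d$: the $4$-cycle contribution is $(nmq^4/d)^2$.

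For dense graphs the estimate is false. Take $K_{a,b}$ with $a$ even vertices in $R$, and let $g(\mathbf{X}_{\mathrm R})=\mathbb{E}_{\mathbf{x}}[\mathrm{SW}(K_{1,\alpha})\mid \mathbf{X}_{\mathrm R}]$. For $b$ even, Jensen gives $\mathbb{E}[\mathrm{SW}(K_{a,b})]=\mathbb{E}[g^b]\ge(\mathbb{E}g^2)^{b/2}\ge c_a^{b/2}d^{-ab/4}$, since the perfect-matching leading term has second moment of order $d^{-a/2}$. This beats your claimed $d^{-(ab-a-b)/2}$ once $ab>2(a+b)$, e.g.\ for $K_{6,6}$.

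The right mechanism is the coverage constraint $\mathrm{supp}(r_1,\dots,r_k)=\alpha$, produced by the alternating sum over $\beta$. Conditionally on $\mathbf{X}_{\mathrm R}\in S_\rho$, each left vertex with neighbourhood $\alpha_v$ contributes $(C\rho|\alpha_v|/\sqrt d)^{\lceil|\alpha_v|/2\rceil}$. So the decay is $d^{-1/4}$ per \emph{edge}, not one factor per unit of cycle excess.

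\textbf{The conditioning step hides the main difficulty.} Your phrase \emph{the contribution from outside $S_\rho$ is negligible via a conditioning argument} covers exactly what the paper is organised around.

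Handled per coefficient, the bad event costs $\mathbb{P}(\overline{S_\rho})=n^{-O(1)}$ for $\rho\asymp\sqrt{\log n}$. This cannot be absorbed into $d^{-\Omega(|\alpha|)}$ once $|\alpha|\gg\log n$. Letting $\rho$ grow with $|\alpha|$ only gives the lossy Corollary, which is useless for $|\alpha|$ up to $nm$. You also cannot truncate a sum over all $\alpha\subseteq K_{n,m}$ at size $\log n$; the $\log n$ in the thresholds is $\rho^2$, not a truncation artefact.

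So you must, as the paper does, run the second moment method on $\mathcal{L}(\mu\mid S_\rho)$ and pay $\mathbb{P}(\overline{S_\rho})$ once in total variation. Then the coefficients become conditional, the right vectors are coupled, and single-edge components no longer have zero weight.
\begin{itemize}
\item For $p=\tfrac12$ this is repaired by the invariance of $S_\rho$ under $\mathbf{x}_u\mapsto\pm\mathbf{x}_u$, which still kills every $\alpha$ with a leaf.
\item For $p\neq\tfrac12$ it is not repaired. With only Proposition-type bounds, $k$-edge matchings contribute about $\sum_k (nm)^k q^{2k}(C\rho^2/d)^k/k!$. Even after discarding leading terms they still contribute $\sum_k (nmq^2\rho^4/d^2)^k/k!$. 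These need $d\gg nmq^2\log n$, respectively $d\gg\sqrt{nm}\,q\log n$, which is far above the stated thresholds. So your global geometric series does not close.
\end{itemize}

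The missing idea is the paper's treatment for $p\neq\tfrac12$:
\begin{itemize}
\item bound $(1+x)^m\le e^{mx}$ so as to keep the per-left-vertex star structure;
\item truncate only star sizes, at $\log m$;
\item drop remainder--remainder products;
\item separate star sizes by Cauchy--Schwarz;
\item integrate the exponentials of the polynomial leading terms by Gaussian hypercontractivity, using the deterministic bound available on $S_\rho$;
\item treat the single-edge exponential separately: remove the conditioning via non-negativity, then Taylor-expand into unconditional weights of disjoint stars, bounded by $(C\ell/d)^{\ell/2}$.
\end{itemize}
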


\noindent We highlight some of the implications of the above theorem in the following. See also \Cref{fig:diagram} for a visualization of the thresholds.
 
\textbf{Efficient tests and absence of computational-statistical gaps.} The above upper and lower bounds on the total variation distance between the two matrix ensembles are almost tight and differ only by a factor of $\log(n)$. In the regime where $\lim \dtv{\mathbb{W}}{\mathbb{M}} = 1$, there are two different regimes characterized by distinct, optimal tests. If $d \ll \boundlower$, then counting \emph{signed 4-cycles} distinguishes $W$ and $M$, and if $d \ll \boundwedgeslower$, then this holds for counting \emph{signed wedges}. Formally, for a matrix $M \in \{0, 1\}^{n \times m}$ these are given by 
\begin{align*}
    C_4(M) &\coloneqq \sum_{ \substack{i < j \\ i,j \in[n]}}\sum_{ \substack{k < \ell \\ k,\ell \in[m]}} (M_{i,k} - p)(M_{i,\ell} - p)(M_{j,k} - p)(M_{j,\ell} - p) \text{ and } 
    P_2(M) \coloneq \sum_{ \substack{i \in[n]}}\sum_{ \substack{k < \ell \\ k,\ell \in[m]}} (M_{i,k} - p)(M_{i,\ell} - p),
\end{align*}
respectively. \Cref{thm:resultunknownmask} shows that once we are in the regime where none of these statistics succeeds\footnote{or strictly speaking, if we exceed this regime by a factor of $\log(n)$}, then \emph{no algorithm} can distinguish the two distributions. Moreover, since $C_4(M)$ and $P_2(M)$ are clearly efficiently computable, this implies in particular that there are \emph{no computational statistical gaps}.

\textbf{The case $p = \frac{1}{2}$ vs. $p \neq \frac{1}{2}$.}
Moreover, \Cref{thm:resultunknownmask} highlights a somewhat surprising effect arising from the choice of $p$: for $p = \frac{1}{2}$, the regime in which $\lim \dtv{\mathbb{W}}{\mathbb{M}} = 1$ is significantly smaller than for the case where $p$ is strictly larger or smaller than $\frac{1}{2}$. This is a consequence of the symmetry of the Gaussian distribution. In particular, if $p = \frac{1}{2}$, then $\tau = 0$ which implies that counting wedges has no statistical power since the two entries $M_{i,k}$ and $M_{i,\ell}$ will be i.i.d. $\Bern{\frac{1}{2}}$ random variables, even conditional on a concrete latent vector $\mathbf{x}_i$. \Cref{thm:resultunknownmask} shows that this effect is not specific to signed wedges, but that it is inherent in the sense that no algorithm can distinguish $W$ and $M$ in the regime where signed four-cycles fail, even if signed wedges would succeed if $p \neq \frac{1}{2}$. Hence, choosing $p = \frac{1}{2}$ makes \Cref{prob:testingunknown} strictly harder compared to any other fixed choice of $p$.

\textbf{Known vs. unknown masks.}
In the setting of \Cref{thm:resultunknownmask}, we assume that the random mask determining which entries are to be re-randomized is not part of the input. The following theorem shows that the testing thresholds established in \Cref{thm:resultunknownmask} shift significantly if this is not the case, i.e., if we instead consider \Cref{prob:testingknown}.
\begin{theorem}[Information-theoretic hardness for known masks]\label{thm:resultknownmask}
    Consider any fixed $p \in (0,1)$. 
    Then, the following holds with probability $1 - o(n)$ over $\ \mathbf{M} \sim \gaussmatarg{n, m, q}$ whenever $d \gg \log(n)^3$.
    
    \vspace{0.5em}
    If $p \neq \frac{1}{2}$ then:\hspace{1em} $\begin{aligned}[t]
            \hspace{0.8cm}\dtv{\wishartmasked}{ \gaussmat} &= \begin{cases} 1 - o(1) & \text{if } d \ll \boundknownlower  \hspace{1.3cm} \text{ or } \hspace{.1cm} d \ll \boundwedgesknownlower \\
            o(1) & \text{if } d \gg \boundknown \hspace{.1cm} \text{ and } \hspace{.1cm} d \gg \boundwedgesknown
            \end{cases}
        \end{aligned}$
        
    \vspace{0.5em}
    If $p = \frac{1}{2}$ then:\hspace{1em} $\begin{aligned}[t]
        \hspace{0.8cm}\dtv{\wishartmasked}{ \gaussmat} &= \begin{cases} 1 - o(1) & \text{if } d \ll \boundknownlower \\
            o(1) & \text{if } d \gg \boundknown
            \end{cases}
        \end{aligned}$

    \vspace{0.5em}
    \noindent Here, $o(1)$ denotes a function that tends to zero as $n \rightarrow \infty$. Recall further that $m \ge n$.

    
\end{theorem}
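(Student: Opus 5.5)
The plan has two parts: an upper bound (indistinguishability) via the second moment / $\chi^2$ method, and a lower bound (distinguishability) via signed statistics restricted to the mask. Throughout I condition on a typical mask $\mathbf{M}\sim\gaussmatarg{n,m,q}$. With probability $1-o(1)$ every row has $\Theta(mq)$ ones, every column has $\Theta(nq)$ ones, and the number of $4$-cycles in the bipartite graph $\mathbf{M}$ is $\Theta(n^2m^2q^4)$.

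\textbf{Distinguishability.} When $d\ll nmq^2$, I use the masked signed 4-cycle count $C_4^{\mathbf{M}}(M)$. This is $C_4$ with the sum restricted to quadruples $(i,j,k,\ell)$ whose four entries all lie in the mask. Under $H_0$ it has mean $0$ and variance $\Theta(n^2m^2q^4)$. Under $H_1$, each masked 4-cycle has expectation $\Theta(1/d)$ by the standard Gaussian-RGG computation (the same one used for the unmasked count in \cite{brennan2021finetti}). So the mean is $\Theta(n^2m^2q^4/d)$, and this dominates the standard deviation $nmq^2$ exactly when $d\ll nmq^2$. It remains to bound the variance under $H_1$ by the same order, which I do by summing covariances over overlapping pairs of cycles. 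Chebyshev then finishes.

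For $p\ne\frac12$ and $d\ll m\sqrt n q$, I use the masked signed wedges $P_2^{\mathbf{M}}$. Each masked wedge has $H_1$-mean $\Theta(1/d)$ because $\tau=\Theta(1)\cdot d^{-1/2}$-corrections appear: conditional on $\mathbf{x}_i$, the edge probability is $p+c(\|\mathbf{x}_i\|^2/d-1)+\dots$ with $c\ne0$. There are $\Theta(nm^2q^2)$ masked wedges, so the mean is $\Theta(nm^2q^2/d)$ against a null standard deviation of $\sqrt n\,mq$. This gives the threshold $d\ll m\sqrt nq$, and again the $H_1$ variance must be checked.

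\textbf{Indistinguishability.} Here I bound $\chi^2(\wishartmasked,\gaussmat)$. Because entries off the mask are i.i.d.\ under both hypotheses, this reduces to the masked entries alone, with only $|\mathbf{M}|$ observed coordinates. I expand the likelihood ratio in the orthonormal Fourier basis $\prod_{e\in\alpha}(M_e-p)/\sqrt{p(1-p)}$ over edge sets $\alpha\subseteq\mathbf{M}$. This gives
$$1+\chi^2=\sum_{\alpha\subseteq \mathbf{M}} \Expected{\textstyle\prod_{e\in\alpha}\frac{W_e-p}{\sqrt{p(1-p)}}}^2 .$$
The key input is the bound on signed subgraph counts (the Fourier coefficients) of the Gaussian bipartite RGG, which the paper develops via characteristic functions and power-series cancellations. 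The shape I aim for is that, for $\alpha$ with no isolated edges (leaves are handled separately), $|\widehat W(\alpha)|\le (C\log n/\sqrt d)^{|\alpha|}\cdot\prod_{v}(\cdots)$ with an extra gain at every vertex of degree~$1$ when $p=\frac12$. The coefficient vanishes if any vertex has degree $1$ and $p=\frac12$. For $p\ne\frac12$, a degree-1 vertex contributes through the $\|\mathbf{x}_i\|^2$ term at cost $d^{-1/2}$ per leaf pair.

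I then count subgraphs of $\mathbf{M}$ with $a$ row-vertices, $b$ column-vertices and $k$ edges. There are at most $n^am^bq^k$ of them, times combinatorial factors. Squaring the coefficient bound gives roughly $d^{-k}$-type decay, and I need
$$\sum_{\alpha\ne\emptyset} n^{a}m^{b}q^{k}\,\widehat W(\alpha)^2=o(1).$$
For cycle-like (2-regular-ish) graphs $k\ge a+b$, so each vertex pair costs $nmq^2/d$, which is small when $d\gg nmq^2\log n$. For star/wedge-like pieces, a row vertex with its two leaves costs $n m^2q^2/d^2$, which is small when $d\gg m\sqrt n q\log n$. The main obstacle is proving the Fourier-coefficient bound uniformly for subgraphs as large as $|\alpha|\approx \log n$ or bigger, with constants that don't blow up combinatorially. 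Large $\alpha$ must be handled separately by a truncation: first condition on the latent vectors lying in the typical event $\Sevent$ (near-orthogonality), which costs $o(1)$ in total variation, and then use the power-series approximation of the characteristic function only up to a polynomial order. Finally, a union over masks is unnecessary, since the statement is for typical $\mathbf{M}$.
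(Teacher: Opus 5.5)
Your distinguishability half is the paper's argument: mask-restricted signed $4$-cycle and wedge counts with Chebyshev, with the $H_1$ variance only sketched in both. Your $p=\frac12$ indistinguishability plan is also essentially the paper's. You need to add one point there. After conditioning on $S_\rho$, the vanishing of coefficients with a degree-one $R$-vertex is no longer automatic; it uses that $S_\rho$ is invariant under independent sign flips $\mathbf{x}_v\mapsto\pm\mathbf{x}_v$. (Degree-one $L$-vertices give $\Phi(0)-\frac12=0$ pointwise.) The coefficient shape $(C\log n/\sqrt d)^{|\alpha|}$ you aim for is false, since a $4$-cycle has signed weight $\Theta(1/d)$. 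The star proposition gives $\prod_{u\in L}(C\deg(u)\rho/\sqrt d)^{\lceil \deg(u)/2\rceil}$, roughly $d^{-|\alpha|/4}$, which is what your cycle count actually uses.

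\textbf{The gap: indistinguishability for $p\neq\frac12$.} The available unconditional bound is useless once $|\alpha|\gtrsim d^{1/3}$, so you cannot sum squared unconditional coefficients over all $\alpha\subseteq\mathbf{M}$. After conditioning on $S_\rho$, your outline does not control the coefficients $\mathbb{E}_{\mathbf{X}_{\text{L}}}\mathbb{E}_{\mathbf{X}_{\text{R}}\mid S_\rho}[\cdot]$, for two reasons.

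\emph{(a) Leading terms cancel only in expectation.} An $L$-vertex with neighbours $v,v'$ contributes the leading term $-\phi(\tau)^2\langle\mathbf{x}_v,\mathbf{x}_{v'}\rangle/d$. This has size $\rho/\sqrt d$ pointwise. Bounding absolute values already gives $n\cdot nmq^2\rho^2/d\neq o(1)$ for the single path $R$--$L$--$R$. Using the cancellation coefficient by coefficient means tracking how every term splits by degree pattern.

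\emph{(b) Single edges.} A degree-one $L$-vertex at $v$ contributes $\Phi_{s_v}(\tau)-p\approx c\,(s_v^2-\widehat{\sigma}^2)$, where $s_v^2=\|\mathbf{x}_v\|^2/d$ and $c\neq0$ because $\tau\neq0$. This is of order $\rho/\sqrt d$ pointwise, and its mean vanishes only without conditioning; $S_\rho$ couples disjoint components. There are about $(nmq)^j/j!$ unions of $j$ disjoint single edges. The natural estimate via $\|F\|_2\,\mathbb{P}(\overline{S_\rho})^{1/2}$ sums to $\mathbb{P}(\overline{S_\rho})\,e^{Cnmq/d}$, which needs $d\gtrsim nmq/\log n$, far above both thresholds. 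No sign symmetry removes these terms, because they come from fluctuations of norms, not directions. This is exactly the obstruction the paper points out.

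\textbf{The missing idea: abandon the full Fourier sum.}
\begin{enumerate}
\item Condition only $\mathbf{X}_{\text{R}}$ on $S_\rho$. The i.i.d.\ $L$-vectors then give $\mathbb{E}_{\mathbf{M}}[1+\chi^2]=\mathbb{E}\big[\big(1+\sum_{\alpha\subseteq R}q^{|\alpha|}(\cdots)\big)^m\big]\le\mathbb{E}\big[\exp\big(m\sum_{\alpha\subseteq R}q^{|\alpha|}(\cdots)\big)\big]$. Only conditional weights of stars appear in the exponent.
\item Discard $|\alpha|\ge\log m$ and the remainder--remainder products.
\item Separate the levels $|\alpha|=k$ by Cauchy--Schwarz.
\item Integrate each exponent by hypercontractivity. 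It is a mean-zero polynomial in Gaussian inputs, bounded on $S_\rho$, whose variance comes from the explicit form of $\Lambda_\alpha$. So each factor is $1+O(\sigma)$, and the cancellations in (a) are absorbed into a variance computation.
\item Handle the single-edge exponential by dropping the conditioning (the integrand is nonnegative) and Taylor-expanding. This leaves only unconditional star weights, which vanish for one leaf and are at most $(C\ell/d)^{\ell/2}$ otherwise. This disposes of (b), and it is one place where $d\gg m\sqrt n\,q$ enters.
\end{enumerate}
Finally, ``at most $n^am^bq^k$ subgraphs of $\mathbf{M}$'' is an expectation over $\mathbf{M}$. Average the $\chi^2$ bound over $\mathbf{M}$, which is what yields $q^{|\alpha|}$ rather than $q^{2|\alpha|}$, and finish with Markov's inequality.
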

\noindent Accordingly, comparing the thresholds given in \Cref{thm:hardnessknown} and \Cref{thm:hardnessunknown}, we can see that switching from known to unknown masks amounts to replacing $q$ by $q^2$. 

\textbf{Discrete vs. continuous models.} Note that while the setting of known masks was already considered in \cite{brennan2021finetti} for the continuous analogue of our model, the above theorem shows that the discrete version of the problem considered here exhibits \emph{different} thresholds once $q$ is sufficiently small, even if $p \neq \frac{1}{2}$. This is because in the continuous model, there are subtle differences in the marginal distribution of every entry $M_{u,v}$ when comparing $H_0$ and $H_1$. Testing for these differences enables an efficient test whenever $d \ll \sqrt{nmq}$. In our discrete version, the marginals match exactly and convergence in total variation therefore occurs earlier. More details on the differences of the discrete and the continuous setting can be found in \Cref{sec:challengespreviouswork}.

\textbf{Convergence in total variation, but  only for averaged masks.}
Abbreviating for now $\Wshort = \matnoiseedge$, $\WshortM = \wishartmasked$, and $\Mshort = \gaussmat$, then the bounds from \Cref{thm:resultunknownmask} and \Cref{thm:resultknownmask} lead to the remarkable phenomenon that for all fixed $p$, there is a regime of the parameters $d, q$ where $\dtv{\Wshort}{\Mshort} = o(1)$, while for the same $d$ and $q$, we have $\dtvnop{\WshortM}{\Mshort} = 1 - o(1)$, with high probability over the choice of $\mathbf{M}$. This means that for most fixed masks $\mathbf{M}$ of density $q$ (i.e. $\mathbf{M} \sim \gaussmatarg{n, m, q}$), the resulting conditional distribution $\WshortM$ is efficiently distinguishable from $\Mshort$, while in expectation over $\mathbf{M}$, convergence in total variation occurs. In particular, with high probability over $\mathbf{M}$, we have  
\begin{align*}
    \dtv{\WshortM}{\Mshort} = 1  - o(1) \text{ while } \dtv{\Expectedsub{\mathbf{M}}{\WshortM} }{\Mshort} = o(1).
\end{align*}
In other words, the distribution $\WshortM$ is very far from $M$ for most masks $\mathbf{M}$, while it is asymptotically indistinguishable from $M$ when considering the averaged distribution $\Wshort = \Expectedsubnop{\mathbf{M}}{\WshortM}$. Another point of view on this phenomenon is that every algorithm that distinguishes $\Wshort$ from $M$ in the regime where $\dtv{\Wshort}{\Mshort} = o(1)$ must depend on $\mathbf{M}$.

\section{Technical Contributions}

\subsection{Second moment method}
Like many related results from the literature, our information-theoretic lower bounds are essentially based on applying a version of the \emph{second moment method}. Concretely, this means that given two probability distributions $\mu, \nu$, we wish to relate total variation distance $\dtv{\mu}{\nu}$ to the so-called $\chi^2$-divergence $\dchisq{\mu}{\nu}$. These divergences are given by
$$
    \dtv{\mu}{\nu} \coloneqq \frac{1}{2}\Expectedsubb{\xi \sim \nu}{ \left| \frac{\d \mu}{\d \nu}(\xi)-1\right| } \text{ and } \dchisq{\mu}{\nu} \coloneqq \Expectedsubb{\xi \sim \nu}{ \left( \frac{\d \mu}{\d \nu}(\xi)\right)^2 } - 1,
$$
where $\frac{\d \mu}{\d \nu}(\xi)$ denotes the probability of sampling the matrix $\xi$ under distribution $\mu$ divided by the probability of sampling $\xi$ under $\nu$.
Now, we bound $\dtv{\mu}{\nu}$ by means of Pinsker's inequality and by passing from relative entropy to $\chi^2$-divergence as
$
    2\dtv{\mu}{\nu}^2 \le \dchisq{\mu}{\nu}.  
$
The distribution $\mu$ can be expressed as a mixture $\mu = \Expectedsub{\phi}{\mu_\phi}$ over some latent randomness $\phi$ where $\mu_\phi$ denotes the distribution $\mu$ conditional on $\phi$. 
We then expand the squared expectation into two expectations over independent copies $\phi^{(1)}, \phi^{(2)}$ of the latent randomness and exchange the inner and outer expectation to get 
$$
    \dchisq{\mu}{\nu} = \Expectedsubb{\xi \sim \nu}{ \left( \frac{\d \mu}{\d \nu}(\xi)\right)^2 } - 1 = \Expectedsubb{\phi^{(1)}, \phi^{(2)} \sim \varrho \otimes \varrho}{ \Expectedsubb{\xi \sim \nu}{ \frac{\mathrm{d} \mu_{\phi^{(1)}}}{\mathrm{d} \nu}(\xi) \frac{\mathrm{d} \mu_{\phi^{(2)}}}{\mathrm{d} \nu}(\xi) }  } - 1,
$$
where $\varrho$ is the law of the latent randomness.
The advantage of this re-formulation is that the inner expectation can now be computed exactly since $\nu$ is a simple product distribution.

\subsection{Challenges arising from previous work}\label{sec:challengespreviouswork}
The starting point for this work is the work of Brennan, Bresler, and Huang \cite{brennan2021finetti} who studied a similar problem for fixed and known masks $\mathbf{M}$ of density $q$. The main difference to our setting is that they study a continuous version of our problem where the goal is to distinguish a (masked) Wishart matrix $W = \frac{1}{\sqrt{d}}(\mathbf{X}^\top\mathbf{X} - d\mathbf{I})$ from a matrix with i.i.d. standard Gaussian entries, and that they only consider the setting of known masks.

\textbf{Different thresholds in the continuous vs. discrete model.}
While information-theoretic lower bounds for the continuous model considered in \cite{brennan2021finetti} directly imply the same lower bounds for the discrete model studied here (for the case of known masks), it turns out that in the presence of noise, these lower bounds are not strong enough to tightly characterize testing in the discrete setting, even if $p$ is a constant (this is in contrast to the noiseless case). This can already be seen by comparing our bounds from \Cref{thm:resultknownmask} with the bounds in \cite[Theorem 4.2]{brennan2021finetti}. Concretely, in the case of relatively small $d$, the continuous model is much \emph{less} noise sensitive than its discrete counterpart as it remains efficiently distinguishable from the i.i.d. Gaussian ensemble as long as $d \ll \sqrt{nmq}$, while our model becomes \emph{indistinguishable} from $M$ if $d \gg \boundwedgesknown$, i.e., at much lower levels of noise, provided that $d$ is sufficiently small. Consider for example the case where $n = m$ and $d$ scales as some power of $\log(n)$, then the discrete model converges in total variation to $M$ if $q \ll n^{-3/2}\log(n)$, while for the continuous case, convergence to the i.i.d. Gaussian ensemble only occurs for $q \ll n^{-2}$. If $p = \frac{1}{2}$, the difference is even bigger, here convergence occurs if $q \ll n^{-1}\log(n)$ in the discrete setting, while the threshold stays at $q \ll n^{-2}$ for the continuous model.

\textbf{Handling unknown masks.}
In addition to the fact that the two models provably behave differently in the presence of noise, the techniques given in \cite{brennan2021finetti} do not fully extend to the technically much more challenging case of unknown masks. This is because not knowing the mask results in convergence in total variation at much lower levels of noise $q$, which means that for a given $q$, the proofs need to yield convergence for asymptotically much smaller values of $d$, which requires much more careful arguments. In particular, for any fixed $q$, every column of $W \sim \matnoiseedge$ will contain roughly $nq$ entries that have not been re-randomized. Conditional on the random vectors $\xr$ in $R$, the distribution of these $k \approx nq$ entries is jointly Gaussian with a covariance matrix $\boldsymbol{\Sigma}(\xr) \in \mathbb{R}^{k\times k}$ dictated by the inner products of the vectors in $R$. A crucial fact exploited in the proofs in \cite[Section 9]{brennan2021finetti} is that $\boldsymbol{\Sigma}(\xr)$ is positive definite, which ensures that the resulting distribution has a well-behaved probability density such that we can bound total variation by means of the Radon-Nikodym derivative of (two copies of) $\mathcal{N}(0, \boldsymbol{\Sigma}(\xr))$ and the standard Gaussian $\mathcal{N}(0, \mathbf{I})$, which arises from $\dchisq{\mu}{\nu}$ when applying the second moment method.

However, to ensure a that $\mathcal{N}(0, \boldsymbol{\Sigma}(\xr))$ has a well-behaved density, we have to require $d \gg nq$ as for $d \ll nq$, it is not hard to see that $\boldsymbol{\Sigma}(\xr)$ is a matrix of rank much less than $k \approx nq$, so it has a non-trivial kernel and is \emph{not} positive definite. This makes the resulting Radon-Nikodym derivative \emph{infinite} and, thus, results in failure of the second moment method. For our setting, however, we cannot assume that $d \gg nq$ as we wish to obtain convergence already when $d \gg \bound + \boundwedges$.

\subsection{Bounding total variation in terms of signed subgraph counts}
While the proofs in \cite{brennan2021finetti} bound a continuous version of $\dchisq{\mu}{\nu}$, we proceed with discrete distributions $\mu, \nu$ and use a calculation similar to the one in \cite{liuracz} to get a sum over the \emph{expected signed weight} $\Expected{\SW{\alpha}}$ of all sub-graphs $\alpha \subseteq K_{n,m}$, defined as follows.
\begin{definition}[Signed weights]
Let $\sigma(\langle \mathbf{x}_u, \mathbf{x}_v \rangle)$ be the indicator function of the event $\frac{1}{\sqrt{d}} \langle \mathbf{x}_u, \mathbf{x}_v \rangle \le \tau$. For any $\alpha \subseteq K_{n, m}$, and latent vectors $(\mathbf{x})_{u \in L \cup R}$ given by the columns of $\ \xl, \xr$, we define the \emph{signed weight} of $\alpha$ as
    $$
        \SW{\alpha} \coloneqq {\prod}_{\{u,v\} \in \alpha} ( \sigma(\langle \mathbf{x}_u, \mathbf{x}_v \rangle) - p  ).
    $$
\end{definition}
\noindent In the setting of an unknown mask, our expression for $\dchisq{\mu}{\nu}$ admits an explicit representation in terms of expected signed weights:
\begin{align}\label{eq:sumsignedweights}
    1 + \dchisq{\mu}{\nu} = \Expectedsubb{\phi^{(1)}, \phi^{(2)} \sim \rho \otimes \rho}{ \Expectedsubb{\xi \sim \nu}{ \frac{\mathrm{d} \mu_{\phi^{(1)}}}{\mathrm{d} \nu}(\xi) \frac{\mathrm{d} \mu_{\phi^{(2)}}}{\mathrm{d} \nu}(\xi) }  }= 1 + \sum_{\emptyset \neq \alpha \subseteq K_{n, m}} \frac{q^{2|\alpha|}}{(p(1-p))^{|\alpha|}}\ \Big( \  \underset{\xr, \xl}{\mathbb{E}}{\big[\SW{\alpha}\big] } \ \Big)^2
\end{align}
and our problem reduces to showing that the above sum is $o(1)$ once $d \gg \bound + \boundwedges$. For the details in deriving the above expression, we refer to \Cref{sec:info}


\subsection{Bounding expected signed weights}
The main technical challenge is now to find good bounds on the expected signed weights appearing in the above sum. We outline the main challenges arising from this approach.

\textbf{Bounds given in previous work.}
While signed subgraph counts for random geometric graphs have been studied in several recent works, see for example \cite{bangachev2024fourier, bangachev2024detection, baguley2025testing}, meaningful bounds have so far only been obtained for very small patterns $\alpha$ with at most $\text{polylog}(n)$ edges. For this work, however, we require bounds that stay meaningful for \emph{all} $\alpha \subseteq K_{n, m}$, with a number of edges of order up to $\sim nm$.

While an extension of the bounds given in the recent work of Bangachev and Bresler \cite{bangachev2024fourier} to large $\alpha$ would be no easy task on its own, it turns out that even if the bounds given in \cite{bangachev2024fourier} would extend to larger $\alpha$ as they are, they would nonetheless be too weak for our purposes, even in the noiseless case. The reason for this is that (as described in \cite[Remark 1]{bangachev2024fourier}) the bounds given in \cite{bangachev2024fourier} are of order at least $(1/\sqrt{d})^{|V(\alpha)|}$, while we would need a decay of order $(1/\sqrt{d})^{|\alpha|}$ (i.e. a bound that decays exponentially in the number of \emph{edges} instead of vertices) to make the sum in \eqref{eq:sumsignedweights} small enough. This is because a decay of order $(1/\sqrt{d})^{|V(\alpha)|}$ fails to counterbalance the number of possible $\alpha \subseteq K_{n , m}$ already if $\alpha$ has only $n^\varepsilon$  vertices. 

One possible approach for dealing with this issue (as also suggested in \cite[Remark 1]{bangachev2024fourier}) is to condition on a subset of the latent information given by one side of the bipartition. While this is indeed an important aspect of our approach, the arguments given in \cite{bangachev2024fourier} would still not yield an exponent that is strong enough for our purposes. Moreover, as we will see later, we do not only need a bound on $\Expected{\SW{\alpha}}$, but we also need to explicitly control how exactly this quantity \emph{depends} on the latent vectors we condition on, which seems to be out of reach of the techniques considered so far.

\textbf{Our approach.}
Our main technical contribution is therefore a novel, Fourier-theoretic way of bounding expected signed weights for bipartite Gaussian RGGs that seemlessly extends to large $\alpha$ while also improving on the known bounds for small patterns. It makes a statement on $\Expected{\SW{\alpha}}$ in the conditional probability space obtained by conditioning on the high-probability event that the inner products of all right-sided random vectors are not much larger than we would expect. Concretely, we define $\Sevent \in \sigma(\xr)$ as $$
    \Sevent \coloneqq \Seventdef.
$$
Our main technical theorem now expresses the expected signed weight of an arbitrary star $K_{1, \alpha}$ with center in $L$ and leaves given by $\alpha \subseteq R$ as a polynomial in the latent vectors in $\alpha$ plus a lower-order correction term. 

\begin{restatable}[Bound on conditional signed weights]{prp}{fourierstuff}
\label{prop:fourierexpressionthing}
    Define $\sighat$ such that $\Phi_{\sighat}(\tau) = p$, where $\Phi_{\sighat}$ is the CDF of the distribution $\mathcal{N}(0, \sighat^2)$. For any given set $\alpha \subseteq R$ define the random variable
    $
        \SW{K_{1,\alpha}} \coloneqq \prod_{u \in \alpha} (\sigma(\langle \mathbf{x}_{u}, \mathbf{x} \rangle) - p)
    $
    where $(\mathbf{x}_u)_{u \in \text{R}}$ are the columns of $\ \xr$.
    Then, there are functions $\leading, \rest$ such that for all $\alpha \subseteq R$, we have
    $
        \Expectedsub{\mathbf{x}}{ \SW{K_{1,\alpha}} \mid \xr } = \leading + \rest
    $
    whenever $\xr \in \Seventshort$.
    Defining $\ell \coloneqq \lceil |\alpha| / 2 \rceil$, we further have 
    \begin{align*}
        \leading \coloneqq \frac{ (-1)^\ell}{2^\ell \ell! } \sum_{ \substack{ r_1, \ldots r_{\ell} \in \alpha\times \alpha \\ \supp(r_1, \ldots, r_\ell) = \alpha }} \ \Bigg( \prod_{j = 1}^k \Big( \tfrac{1}{d} \big\langle \mathbf{x}_{r_j(1)} ,\mathbf{x}_{r_j(2)} \big\rangle - \sighat^2 \mathbf{I}_{r_j(1),r_j(2)}\Big) \Bigg) \ \Bigg( \prod_{e \in \alpha}\phi^{(s_e -1)}(\tau) \Bigg)
    \end{align*} where {\normalfont  $\supp(r_1, \ldots, r_k) \coloneqq \{ e \in \alpha \mid r_j(1) = e \text{ or } r_j(2) = e \text{ for some }j \}$} is the set of edges covered by the 2-tuples $r_1, \ldots, r_k$, where $s_e$ is the number of tuples $r_j$ that cover edge $e$, i.e., 
    $
        s_e \coloneqq | \{ j \in [k] \mid r_j(1) = e \} | + | \{ j \in [k] \mid r_j(2) = e \} |,
    $
    and where $\phi^{(s)}$ is the $s$-th derivative of the standard Gaussian density. Moreover, there is an absolute constant $C > 0$ such that 
    $$
        |\leading| \le \Bigg( \frac{C\rho  |\alpha| }{\sqrt{d}} \Bigg)^{\ell} \text{ and } |\rest| \le \Bigg( \frac{C \rho |\alpha| }{\sqrt{d}} \Bigg)^{\ell + 1}.
    $$
\end{restatable}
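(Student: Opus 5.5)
Conditional on $\xr$, the vector $\mathbf{g} = (d^{-1/2}\langle \mathbf{x}_u, \mathbf{x}\rangle)_{u \in \alpha}$ with $\mathbf{x} \sim \mathcal{N}(0,\mathbf{I}_d)$ is exactly centered Gaussian with covariance $\boldsymbol{\Sigma} = (\tfrac1d\langle \mathbf{x}_u,\mathbf{x}_v\rangle)_{u,v\in\alpha}$. So the plan is to write $\Expectedsub{\mathbf{x}}{\SW{K_{1,\alpha}} \mid \xr}$ as a Gaussian integral of the product $\prod_{u\in\alpha}(\mathds{1}(g_u\le\tau)-p)$ and expand it around the product measure $\mathcal{N}(0,\sighat^2\mathbf{I})$. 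The choice of $\sighat$ with $\Phi_{\sighat}(\tau)=p$ makes every factor centered under the reference measure. (After rescaling, the constant in the statement is understood to absorb $\sighat$ via $\phi^{(s)}$ evaluated at the normalized threshold.)

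\textbf{Step 1: Fourier expansion.} Set $\mathbf{\Delta} = \boldsymbol{\Sigma} - \sighat^2\mathbf{I}$. On $\Seventshort$ every entry of $\mathbf{\Delta}$ has size $O(\rho/\sqrt d)$, taking $\sighat^2$ to be $1$ up to an $O(\rho/\sqrt d)$ shift. Write the characteristic function as
\[
\varphi(\mathbf{t}) = e^{-\sighat^2\|\mathbf{t}\|^2/2}\, e^{-\mathbf{t}^\top\mathbf{\Delta}\mathbf{t}/2}
\]
and expand the second factor in its power series
\[
\sum_{k\ge0}\frac{(-1)^k}{2^k k!}\Big(\sum_{r\in\alpha\times\alpha}\Delta_{r}\,t_{r(1)}t_{r(2)}\Big)^k = \sum_k \frac{(-1)^k}{2^k k!}\sum_{r_1,\dots,r_k}\prod_j \Delta_{r_j}\prod_{e}t_e^{s_e}.
\]
Inverting term by term, each monomial $\prod_e t_e^{s_e}$ times the product Gaussian corresponds to the product of $s_e$-th derivatives of the one-dimensional density. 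Integrating $\mathds{1}(g_e\le\tau)-p$ against such a density gives $0$ if $s_e=0$, by centering, and $\pm\phi^{(s_e-1)}(\tau)$ if $s_e\ge1$, since a derivative of the density integrated up to $\tau$ is the previous derivative evaluated at $\tau$. Hence only tuples with $\supp(r_1,\dots,r_k)=\alpha$ survive.

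\textbf{Step 2: identify the leading term.} Covering $|\alpha|$ coordinates with pairs forces $k\ge\lceil|\alpha|/2\rceil=\ell$. The $k=\ell$ term is exactly $\leading$. The remainder $\rest$ collects all $k\ge\ell+1$. For the bound on $|\leading|$:
\begin{itemize}
\item there are at most $|\alpha|^{2\ell}$ tuples;
\item each tuple contributes $(\rho/\sqrt d)^\ell$ from the $\Delta$ factors;
\item Hermite bounds give $|\phi^{(s)}(\tau)|\le C^s\sqrt{s!}$ with $\sum_e s_e=2\ell$.
\end{itemize}
Dividing by $2^\ell\ell!$ then gives $(C\rho|\alpha|/\sqrt d)^\ell$. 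The estimate is crude, since $|\alpha|^{2\ell}/\ell!\approx(e|\alpha|)^\ell$ and the Hermite growth $\prod_e\sqrt{s_e!}$ must be balanced against $\ell!$.

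\textbf{Step 3 (main obstacle): control the tail.} The series must be summed over all $k\ge\ell+1$ with geometric decay of ratio $C\rho|\alpha|/\sqrt d$. Termwise interchange of sum and integral is delicate because $|\alpha|$ may be large, even larger than $d$; in that regime the claimed bound exceeds $1$ and is trivial, so we may assume $C\rho|\alpha|\le\sqrt d$.

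Rather than bounding the coefficients entry by entry, I would group the $k$-th term as $\frac{(-1)^k}{2^kk!}\langle \mathbf{\Delta}^{\otimes k}, \mathbf{T}_k\rangle$, where $\mathbf{T}_k$ is a tensor built from products of $\phi^{(s_e-1)}(\tau)$. A crude bound then gives roughly $(|\alpha|\cdot\|\mathbf{\Delta}\|_{\max})^k\,(2k)!^{1/2}/k!$ per term, which is summable once $|\alpha|\rho/\sqrt d$ is small. If the Hermite growth spoils this, I would instead truncate at $k=\ell$ and write the remainder exactly via Taylor's theorem with integral remainder along $\boldsymbol{\Sigma}_s=\sighat^2\mathbf{I}+s\mathbf{\Delta}$. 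Using the heat-equation identity $\partial_{\Sigma_{uv}}\mathbb{E}f=\tfrac12\,\mathbb{E}\,\partial_u\partial_v f$ (applied to smoothed indicators, then taking the limit), the remainder becomes an $(\ell+1)$-fold derivative evaluated at some $\boldsymbol{\Sigma}_s$ that is still near-diagonal on $\Seventshort$. That derivative can be bounded by the same counting argument, applied now with the density of $\mathcal{N}(0,\boldsymbol{\Sigma}_s)$, which is well conditioned because $|\alpha|\rho/\sqrt d$ is small.
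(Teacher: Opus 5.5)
Your route is correct. It reaches the same expansion as the paper by a different path and controls the remainder differently.

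\textbf{Comparison with the paper.} The paper works on the Fourier side:
\begin{itemize}
\item it writes the signed weight as an alternating sum over auxiliary Gaussian vectors $\mathbf z_\beta$, $\beta\subseteq\alpha$;
\item it inverts each orthant probability with the kernel $\prod_e(1-e^{ih\mathbf t(e)})/(i\mathbf t(e))$ and a limit $h\to\infty$;
\item it expands $e^{-\frac12\mathbf t^\top(\boldsymbol\Sigma_\beta-\widehat\sigma^2\mathbf I)\mathbf t}$ and lets the alternating sum over $\beta$ kill every tuple whose support is not $\alpha$;
\item it bounds the tail using $\|\boldsymbol\Sigma_\beta-\widehat\sigma^2\mathbf I\|_{\mathrm{op}}\le C|\alpha|\rho/\sqrt d$ and radial Gaussian moments, which gives the $|\alpha|$-per-factor rate with no combinatorics.
\end{itemize}
You instead expand the density $p_{\boldsymbol\Sigma}=\sum_k\frac{1}{2^kk!}(\boldsymbol\Delta:\nabla^2)^kp_0$ around $p_0=\mathcal N(0,\widehat\sigma^2\mathbf I)$ and integrate against the product of centred indicators. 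The full-support constraint then comes straight from $\int(\mathbf 1\{x\le\tau\}-p)\,\phi_{\widehat\sigma}(x)\,\mathrm dx=0$; the paper's inclusion--exclusion over $\beta$ is just an encoding of this centring. Every surviving term evaluates in closed form, with no auxiliary states and no $h$-limit. Your Step~2 bound on $|\Lambda_\alpha|$ is the paper's argument.

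Your route also avoids a weak point in the paper. The paper discards the kernel by asserting $|\prod_e(1-e^{ih\mathbf t(e)})/(i\mathbf t(e))|\le 1$, which fails near $\mathbf t(e)=0$, where the modulus is about $h$. An $h$-uniform bound there needs the integrand to vanish on the coordinate hyperplanes. The full-support monomials have this property, but the triangle inequality over $\beta$ in the paper's bound on $\widehat R_\alpha$ loses it. So a rigorous version of the paper's tail estimate would also have to go monomial by monomial.

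Carried out your way, the $k=\ell$ term is $+\frac{1}{2^\ell\ell!}\sum\prod_j\Delta_{r_j}\prod_e\phi^{(s_e-1)}_{\widehat\sigma}(\tau)$. This differs from the displayed $\Lambda_\alpha$ by the factor $(-1)^\ell$ and the replacement $\phi\to\phi_{\widehat\sigma}$; for $|\alpha|=2$, positive correlation must increase the signed weight, which confirms the sign. This is harmless downstream, where only $|\Lambda_\alpha|$, squares, and products of two replicas enter.

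\textbf{The step that needs an argument.} The estimate you call crude in Step~3 is the crux. Counting the $|\alpha|^{2k}$ tuples naively against Hermite factors as large as $\sqrt{(2k-|\alpha|)!}$ gives only about $(C|\alpha|^2\|\boldsymbol\Delta\|_{\max})^k$ for large $k$. That is not summable in the range $|\alpha|^2\rho^2\le\varepsilon d$ that the statement needs.

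The bound you assert is nonetheless true once you group tuples by their multiplicity profile $(s_e)_{e\in\alpha}$. Three facts combine:
\begin{itemize}
\item a profile is realized by exactly $(2k)!/\prod_e s_e!$ tuples;
\item $|\phi^{(s-1)}_{\widehat\sigma}(\tau)|\le C^s\sqrt{(s-1)!}$ by the Hermite bound;
\item $\prod_e s_e!\ge(2k)!/|\alpha|^{2k}$ by the multinomial theorem.
\end{itemize}
Hence, with the middle sum over compositions of $2k$ into $|\alpha|$ positive parts,
\[
\frac{1}{2^k k!}\sum_{\substack{r_1,\dots,r_k\\ \text{full support}}}\ \prod_{e\in\alpha}\bigl|\phi^{(s_e-1)}_{\widehat\sigma}(\tau)\bigr|
\;\le\;\frac{C^{2k}(2k)!}{2^k k!}\sum_{(s_e)}\ \prod_{e\in\alpha}\frac{1}{\sqrt{s_e!}}
\;\le\;\frac{C^{2k}\sqrt{(2k)!}}{2^k k!}\binom{2k-1}{|\alpha|-1}\,|\alpha|^{k}
\;\le\;\bigl(4C^{2}|\alpha|\bigr)^{k}.
\]
So the $k$-th term is at most $(C'|\alpha|\rho/\sqrt d)^k$, and the tail $k\ge\ell+1$ is geometric.

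The same count without the support constraint, using $\|\phi^{(s)}_{\widehat\sigma}\|_{L^1}\le C^s\sqrt{s!}$, gives $\sum_k\frac{1}{2^kk!}\|(\boldsymbol\Delta:\nabla^2)^kp_0\|_{L^1}<\infty$ when $|\alpha|\rho/\sqrt d$ is small. This justifies integrating termwise against the bounded test function. With these two facts your primary route is a complete proof.

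Drop the Taylor-remainder fallback. At $\boldsymbol\Sigma_s$ with $s>0$ the measure is no longer a product, so neither the centring cancellation nor the factorization into one-dimensional $\phi^{(s_e-1)}(\tau)$ is available. ``The same counting argument'' there would need new derivative bounds for correlated Gaussians.
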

 Note that while we only make a statement about sub-graphs of the form $K_{1,\alpha}$, the above immediately yields an analogous bound for arbitrary $\alpha \subseteq K_{n, m}$ by exploiting independence over the latent vectors in $L$. Looking at the given explicit bounds on $\leading$ and $\rest$, it is easy to see that the conditional signed weights decay exponentially in $|\alpha|$, i.e., in the number of \emph{edges}, instead of the number of vertices. For small $|\alpha|$, it is easy to further remove the conditioning on $\Seventshort$ and get an explicit bound on the low-degree Fourier coefficients that, in particular, improve upon the bounds given in~\cite{bangachev2024fourier} if $\alpha$ is bipartite.

\begin{corollary}[Bound on unconditional signed weights]\label{cor:sw}
    For any $\alpha \subseteq K_{n, m}$, we have 
    \begin{align*}
        \underset{\xl, \xr}{\mathbb{E}}{\big[\SW{\alpha}\big] } \le \left( \frac{C|\alpha|^{3/2} \log(d)^{1/2}}{\sqrt{d}} \right)^{|\alpha|/2}.
    \end{align*}
\end{corollary}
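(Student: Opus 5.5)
We derive the bound from \Cref{prop:fourierexpressionthing} by conditioning on $\xr$ and using independence over the vertices in $L$. For $v \in L$, let $\alpha_v \subseteq R$ be the neighbourhood of $v$ in $\alpha$. Conditional on $\xr$, the vectors $(\mathbf{x}_v)_{v \in L}$ are independent. Hence
\[
\Expectednop{\SW{\alpha} \mid \xr} = \prod_{v \in L,\, \alpha_v \neq \emptyset} \Expectedsubnop{\mathbf{x}}{\SW{K_{1,\alpha_v}} \mid \xr}.
\]
Fix $\rho = C_0\sqrt{\log(d)}$ for a large constant $C_0$; this choice is tuned at the end. We split according to the event $\Sevent$:
\[
\Expectednop{\SW{\alpha}} = \Expectednop{\SW{\alpha}\mathbf{1}_{\Sevent}} + \Expectednop{\SW{\alpha}\mathbf{1}_{\Sevent^c}}.
\]

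\textbf{The main term.} On $\Sevent$, \Cref{prop:fourierexpressionthing} gives for each star
\[
\big|\Expectedsubnop{\mathbf{x}}{\SW{K_{1,\alpha_v}} \mid \xr}\big| \le |\Lambda_{\alpha_v}| + |r_{\alpha_v}| \le 2\Big(\frac{C\rho|\alpha_v|}{\sqrt d}\Big)^{\lceil|\alpha_v|/2\rceil}.
\]
This assumes $C\rho|\alpha_v|/\sqrt d \le 1$; otherwise the claimed bound exceeds $1$ and is trivial, since $|\SW{\cdot}| \le 1$. Each factor is at most $(C'\rho|\alpha|/\sqrt d)^{|\alpha_v|/2}$, after absorbing the factor $2$ into the constant using $\lceil|\alpha_v|/2\rceil \ge 1$. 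Multiplying over $v$ and using $\sum_v |\alpha_v| = |\alpha|$ gives
\[
\big|\Expectednop{\SW{\alpha}\mathbf{1}_{\Sevent}}\big| \le \Big(\frac{C'\sqrt{\log d}\,|\alpha|}{\sqrt d}\Big)^{|\alpha|/2}.
\]
This is already within the claimed bound, since $|\alpha| \le |\alpha|^{3/2}$.

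\textbf{The tail term.} Here $|\Expectednop{\SW{\alpha}\mathbf{1}_{\Sevent^c}}| \le \Prnop{\Sevent^c}$. Only the vectors $\mathbf{x}_u$ with $u \in V(\alpha)\cap R$ matter, so we may define $\Sevent$ over at most $|\alpha|$ vectors. Standard concentration for $\frac1d\langle\mathbf{x}_u,\mathbf{x}_v\rangle$ (Bernstein for products of Gaussians, or chi-square tails on the diagonal) together with a union bound over at most $|\alpha|^2$ pairs gives
\[
\Prnop{\Sevent^c} \le |\alpha|^2 e^{-c\rho^2}
\]
for $\rho \le \sqrt d$. This tail is the main obstacle: it must be at most $(|\alpha|^{3/2}\sqrt{\log d}/\sqrt d)^{|\alpha|/2}$. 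That requires $\rho^2 \gtrsim |\alpha|\log(d)$, which rules out $\rho$ of order $\sqrt{\log d}$ uniformly in $|\alpha|$.

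We therefore take $\rho = C_0\sqrt{|\alpha|\log d}$, which gives
\[
\Prnop{\Sevent^c} \le |\alpha|^2 d^{-cC_0^2|\alpha|} \le d^{-|\alpha|/2}
\]
(for $|\alpha| \le d$; otherwise the bound is trivial). Substituting into the main term gives
\[
\Big(\frac{C\rho|\alpha|}{\sqrt d}\Big)^{|\alpha|/2} = \Big(\frac{C''|\alpha|^{3/2}\sqrt{\log d}}{\sqrt d}\Big)^{|\alpha|/2}.
\]
This explains the exponent $3/2$ in the statement. The bounded-difference regime needed for the sub-exponential tail, $\rho \le \sqrt d$, holds whenever the claimed bound is nontrivial. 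Summing the two terms and adjusting $C$ completes the proof. The one delicate point is the bookkeeping that keeps the proposition's constant $C$ uniform when $\rho$ depends on $|\alpha|$, which the proposition allows since $C$ is absolute.
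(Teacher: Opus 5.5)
Your proof is correct and follows the paper's own argument. The paper likewise sets $\rho = C\sqrt{|\alpha|\log d}$, bounds $\mathbb{P}(\overline{S_\rho}) \le 2|V(\alpha)|^2 e^{-c\rho^2} \le (C/\sqrt{d})^{|\alpha|/2}$ by a Chernoff and union bound, and concludes via the conditional star bound (factorized over $L$) and the law of total expectation. Your write-up only adds the explicit star factorization and the trivial regimes, and your exploratory first choice $\rho = C_0\sqrt{\log d}$ should simply be dropped from the final version.
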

\begin{proof}
    We set $\rho = C\sqrt{|\alpha|\log(d)}$ for a sufficiently large constant $C > 0$. Then, using that each inner product is the sum of $d$ i.i.d. random variables, we get from a Chernoff bound and a union bound over all of at most $|V(\alpha)|^2$ pairs of latent vectors that $
        \Prnop{ \overline{\Seventshort} } \le 2 |V(\alpha)|^2 \exp(-c\rho^2 ) \le (C/\sqrt{d})^{|\alpha|/2}
    $ for some constant $c > 0$ and our choice of $\rho$, provided that the constant $C$ is chosen large enough. Then, applying the bounds from \Cref{prop:fourierexpressionthing} and the law of total expectation yields the corollary.
\end{proof}

\subsection{Information-theoretic bounds via a conditional second moment method}

With \Cref{prop:fourierexpressionthing} at hand, we wish to derive an upper bound on the sum in \eqref{eq:sumsignedweights}. However, as already mentioned, simply summing over unconditional squared expected signed weights using \Cref{cor:sw} does not yield the desired bounds. Instead, we condition the right-sided latent vectors to be in line with $\Sevent$ while setting $\rho \approx \sqrt{\log(n)}$. Then, we apply the second moment method to $\dtvnop{\L(\mu\mid \Seventshort)}{\nu}$ where $\L(\mu\mid \Seventshort)$ is the distribution $\mu$ conditional on $\Seventshort$. 

\textbf{The case of $p=\frac{1}{2}$.}
After conditioning, the stronger bounds from \Cref{prop:fourierexpressionthing} are applicable. If $p=\frac{1}{2}$, this is essentially all we need. 
However, naively plugging the bounds from \Cref{prop:fourierexpressionthing} into \eqref{eq:sumsignedweights} would still not give the right thresholds.
Nonetheless, it turns out that the only obstacle is given by sub-graphs $\alpha$ that have leaves, i.e., vertices of degree one. If we omit the conditioning, it is easy to see that by the symmetries of our model arising for $p = \frac{1}{2}$, the expected signed weight of such $\alpha$ is $0$, since the single edge incident to a leaf is always independent of all the rest.
While conditioning on $\Seventshort$ generally breaks independence between latent vectors, it is not too hard to argue that certain symmetries are preserved even after conditioning. Specifically, since $\Seventshort$ only makes a statement about the absolute value of inner products, we can still independently re-randomize the sign in front of every latent vector. Applying this ``noise operator'' preserves all of the inner products in absolute value, but still zeroes out signed weights of sub-graphs with leaves. After accounting for this fact, we can show that the remaining sum is small enough once we carefully account for all $\alpha \subseteq K_{n, m}$. 

\textbf{The case of arbitrary $p$.} For the more general case $p \neq \frac{1}{2}$, 
the picture is quite a lot more complex. The most important difference to before is that sub-graphs with leaves actually have \emph{non-zero} expected signed weight, even unconditionally. This should be no surprise, as there is a regime where counting \emph{signed wedges} is actually a test that outperforms signed four-cycles. As a consequence, we can work under the stronger (for our purposes less restrictive) assumption that $d \gg \bound + \boundwedges$ and we might hope that under this new assumption, the sum converges, even if including sub-graphs with leaves.

Unfortunately, this is not the case when given only the bounds from \Cref{prop:fourierexpressionthing}, because the leading terms $\leading$ are too large for this purpose. One approach to deal with this situation would be to use similar symmetries as used for $p = \frac{1}{2}$ to eliminate at least some of the leading terms we obtain from \Cref{prop:fourierexpressionthing} (instead of the entire expectation) whenever we encounter a graph with leaves. However, there are two important problems with this idea. First, summing explicitly over all squared expected signed weights would be significantly more involved as before because every individual term in the sum fractures into a sum of multiple individual terms that depend on the exact degree sequence of $\alpha$ and out of which some terms ``zero-out'' while others do not. Secondly (and more importantly), even if we could blindly omit all the leading terms from \Cref{prop:fourierexpressionthing}, then the resulting sum would still not be small enough, simply due to the presence of \emph{single edges} which actually carry a non-negligible weight, even after ignoring the associated leading terms $\leading$. While this problem arises purely from conditioning, it can not simply be solved by arguing about preserved symmetries like we did for leaved graphs in case of $p=\frac{1}{2}$. This is because this time, the problem arises from fluctuations in the \emph{length} of the latent vectors, and not their \emph{orientation}.

However, there is a quite different way of bounding our sum that avoids both problems. To this end, we abandon the last equality in \Cref{eq:sumsignedweights} and instead derive an upper bound on $\dchisq{\mu}{\nu}$ in terms of an \emph{exponential sum}. Concretely, we obtain
\begin{align*} 
    1 + \dchisq{\L{(\mu\mid \Seventshort)}}{\nu} &\le \Expectedsubb{\latentstuff}{ \exp\left( m \sum_{\emptyset \neq \alpha \subseteq R} q^{2|\alpha| } \frac{\Expected{ \SWnum{K_{1,\alpha}}{1} } \Expected{ \SWnum{K_{1,\alpha}}{2} }}{(p(1-p))^{|\alpha|}} \right) }
\end{align*}
where for $k \in \{0, 1\}$ we define $
    \Expectednop{\SWnum{K_{1,\alpha}}{k}} \coloneqq \Expectedsubnop{\mathbf{x}}{ \prod_{j \in \alpha} (\sigma(\langle \mathbf{x}_{j}^{(k)}, \mathbf{x} \rangle) - p) \mid \xrnum{k}}
$. In other words, we express $\dchisq{\mu}{\nu}$ as an exponential sum over \emph{conditional expected signed weights of stars}. Plugging in the bounds from \Cref{prop:fourierexpressionthing}, some fortunate simplifications occur already in the exponent.

\textbf{Omitting large stars and higher order terms.}
First of all, it turns out the sum in the exponent is dominated only by relatively few of its leading terms. Concretely our bounds on the \emph{conditional expectation} $\Expectednop{\SWnum{K_{1,\alpha}}{k}}$ already allow us to omit all terms that correspond to $|\alpha| \ge \log(m)$. Moreover, after splitting each term $\Expectednop{\SWnum{K_{1,\alpha}}{k}}$ into leading and remainder terms as in \Cref{prop:fourierexpressionthing}, we obtain the expression $\Expectednop{ \SWnum{K_{1,\alpha}}{1} } \Expectednop{ \SWnum{K_{1,\alpha}}{2} } = \leadingnum{1}\leadingnum{2} + \leadingnum{1}\restnum{2} + \leadingnum{2}\restnum{1} + \restnum{1}\restnum{2}$, and we can observe that the sum over all remainder terms of the form $\restnum{1}\restnum{2}$ can be omitted as well. 

\textbf{Bounding the remaining exponential terms via hypercontractivity} After these simplifications, each term in the exponent can explicitly be expressed as a polynomial in the right-sided latent vectors $\xrnum{1}, \xrnum{2}$ we conditioned on. 
After we split the exponential sum into a product via Cauchy-Schwarz, the exponent appearing in each term $\leadingnum{1}$ or $\leadingnum{2}$ from \Cref{prop:fourierexpressionthing} for some small set $\alpha \subseteq R$. Using the explicit representation of $\leading$ from \Cref{prop:fourierexpressionthing}, we can explicitly control the variance of the polynomials in the exponents over the randomness of the latent vectors $\xr \sim \mathcal{N}(0, \mathbf{I})$. 
Since the integrand (hidden in the expectation over $\xrnum{1}, \xrnum{2}$) is non-negative due to the exponential, we can essentially remove the conditioning on $\Seventshort$ while still taking advantage of the fact that requiring $\xr \in \Seventshort$ implies an explicit (deterministic) upper bound on the integrand. Since the $\xr$ are standard Gaussian, we can use tail bounds obtained from Gaussian hypercontractivity to integrate each term over the randomness of $\xrnum{1}, \xrnum{2}$ up to the explicit upper bound obtained from conditioning on $\Seventshort$. 
This argument shows that each exponential is essentially of order $1 + C\sigma$, where $\sigma$ denotes the variance of the polynomial in the exponent. 
We remark that this approach forms an extension of the ideas first considered in \cite[Section 9]{brennan2021finetti}. The difference is that we work with different, and at the same time higher-degree polynomials than considered before.

\textbf{Bounding the exponential sum over all edges}
The sum resulting from this is almost small enough. The only obstacle left is formed by the exponential sum over humble \emph{single edges} for which even the higher order terms $\restnum{1}, \restnum{2}$ carry non-negligible contributions. Fortunately, due to the non-negativity of the exponential, we can remove the conditioning on $\Seventshort$ and expand the result as a Taylor series. Once again, we now end up with an (infinite) sum over expected signed weights, similar to the one considered in the beginning. However, since we already eliminated all patterns except for single edges from the exponent, we can now ensure that all expected signed weights that still appear in the sum correspond only to \emph{disjoint unions of stars}. For such simple structures, the expected signed weight can be bounded explicitly, even unconditionally. 

What is important to note about this final step is that the purpose of switching from a sum to an exponential only to get back at a (even larger) sum is helpful because it allows us to remove the conditioning on $\Sevent$ which previously had caused the problems related to single edges. As explained before, this would not be as easy if we summed over all edges before switching to the exponential.

\textbf{Known vs unknown masks.}
So far we assumed that we compare $\matnoiseedge$ to $\gaussmat$, i.e., that we are in the case of a unknown mask. Elegantly, the exact same proof also applies to the setting of a known mask $\mathbf{M}$. To see how, we note that we have so far shown that 
$$
    \dtv{\L(\mu \mid \Seventshort) } {\nu} =  \dtv{\Expectedsub{\mathbf{M}}{\mathcal{L}(\mu \mid \mathbf{M}, \Seventshort)}}{\nu}  = o(1),
$$
where $\Expectedsub{\mathbf{M}}{\mathcal{L}(\mu \mid \mathbf{M}, \Seventshort)}$ is the distribution $\mu$ conditioned on a fixed mask $\mathbf{M}$ and on the event $\Seventshort$, so $\L(\mu \mid \Seventshort) = \Expectedsub{\mathbf{M}}{\mathcal{L}(\mu \mid \mathbf{M}, \Seventshort)}$. On the other hand, if we pulled the expectation over $\mathbf{M}$ out of the expression and could instead show
$$
    \Expectedsub{\mathbf{M}}{ \dtv{\mathcal{L}(\mu \mid \mathbf{M}, \Seventshort)}{\nu} } = o(1),
$$ then $\dtv{\matnoiseknown}{\gaussmat}= o(1)$ would follow with probability $1 - o(1)$ over the choice of $\mathbf{M}$, by Markov's inequality. Remarkably, if we apply the second moment method to $\dtv{\mathcal{L}(\mu \mid \mathbf{M}, \Seventshort)}{\nu}$ and then take the expectation over $\mathbf{M}$, all that changes is that we only apply \emph{one expectation} over $\mathbf{M}$ instead of the expectation over \emph{two independent replicas} of $\mathbf{M}$ as before. This reflects precisely in replacing the factor of $q^{2|\alpha|}$ in front of every term by a factor of $q^{|\alpha|}$ while leaving the remaining proof untouched. 

\subsection{Bounding signed weights after cancellations in Fourier space}

It remains to outline the essential core ideas underlying all of our arguments: the proof of \Cref{prop:fourierexpressionthing}. Inspired by the recent ideas presented in \cite{baguley2025testing} for bounding the expected signed weight of certain pattern graphs in terms of joint cumulants via modified Edgeworth expansions, we adopt a similar Fourier-theoretic viewpoint. 

\newcommand{\zbeta}[1]{\mathbf{z}_{#1}}
\newcommand{\covmatbeta}[1]{\boldsymbol{\Sigma}_{#1}}
To this end, we wish to bound the expected signed weight $\Expectedsub{\mathbf{x}}{ \SW{K_{1,\alpha}} \mid \xr } $ of a star $K_{1,\alpha}$ with leaves given by $\alpha \subseteq R$ conditional on some fixed, right-sided latent vectors given by $\xr$. For this, we essentially wish to understand how the joint distribution of the edges in $\alpha$ deviates from independence. To understand the dependencies that are present, we start from the observation that the inner products 
$
    \zbeta{\alpha} \coloneqq \tfrac{1}{\sqrt{d}}(
        \langle \mathbf{x}_1, \mathbf{x} \rangle,
        \langle \mathbf{x}_2, \mathbf{x} \rangle,
        \ldots,
        \langle \mathbf{x}_{\ell}, \mathbf{x} \rangle)^\top
$ for $\alpha = \{1, \ldots, \ell\}$ and fixed latent vectors $(\mathbf{x}_u)_{u \in \alpha}$ are jointly Gaussian with covariance matrix $\covmatbeta{\alpha}$ given as $$
    (\covmatbeta{\alpha})_{u,v} = \tfrac{1}{d} \langle\mathbf{x}_u, \mathbf{x}_v \rangle.
$$
To bound $\Expectedsub{\mathbf{x}}{ \SW{K_{1,\alpha}} \mid \xr } $ we would like to integrate the above distribution component-wise up to the connection threshold $\tau$ and then study its deviations from a random vector with i.i.d. $\Bern{p}$ entries by evaluating the alternating sum
\begin{align}\label{eq:swexpansion}
    \Expectedsub{\mathbf{x}}{ \SW{K_{1,\alpha}} \mid \xr } = \mathbb{E}_{\mathbf{x}} \bigg[ {\prod_{\ell \in \alpha} ( \sigma(\langle \mathbf{x}_\ell, \mathbf{x} \rangle) - p ) } \bigg]  = \sum_{\beta \subseteq \alpha} (-1)^{|\alpha \setminus \beta|} p^{|\alpha \setminus \beta|} \mathbb{E}_{\mathbf{x}} \bigg[  {\prod_{\ell \in \beta} \sigma(\langle \mathbf{x}_\ell, \mathbf{x} \rangle)  } \bigg].
\end{align}
To this end, we essentially wish to understand the probability $\mathbb{E}_{\mathbf{x}} \big[  {\prod_{\ell \in \beta} \sigma(\langle \mathbf{x}_\ell, \mathbf{x} \rangle)  } \big]$ for all edge sub-graphs $\beta \subseteq \alpha$. However, to get a good bound on the entire sum, it does not nearly suffice to compute each $\mathbb{E}_{\mathbf{x}} \big[  {\prod_{\ell \in \beta} \sigma(\langle \mathbf{x}_\ell, \mathbf{x} \rangle)  } \big]$ up to the asymptotically correct order. The reason for this is that \emph{cancellations} occur in the above expression that render the entire sum \emph{much smaller} than the absolute value of its largest term. Because of this, it is actually very important to precisely understand many of the \emph{higher-order terms} that make up $\mathbb{E}_{\mathbf{x}} \big[  {\prod_{\ell \in \beta} \sigma(\langle \mathbf{x}_\ell, \mathbf{x} \rangle)  } \big]$. This is even more true if we even wish to get an explicit representation of $\mathbb{E}_{\mathbf{x}} \big[  {\prod_{\ell \in \beta} \sigma(\langle \mathbf{x}_\ell, \mathbf{x} \rangle)  } \big]$ as a polynomial in the fixed latent vectors like in \Cref{prop:fourierexpressionthing}.

\textbf{Expressing perturbations from a ground state.}
To start understanding dependencies, we wish to compare $\zbeta{\alpha}$ to an independent ground state.
As the intuition would suggest, the appropriate choice for this ground state is essentially a standard Gaussian vector $\zbeta{\emptyset} \in \mathbb{R}^{|\alpha|}$. We say `essentially', because we actually need to slightly correct the variance along each dimension, simply to ensure that $\Pr{\zbeta{\emptyset}(j) \le \tau} = p$ for all $j$ such that we match the ground-truth density $p$ after integrating. Therefore, we set our ground state as $\zbeta{\emptyset} \sim \mathcal{N}(0, \sighat^2\mathbf{I})$ where $\sighat$ is the \emph{reference variance} chosen as a function of $p$.

\textbf{Intermediate states.}
To better represent every single term in \eqref{eq:swexpansion}, it is convenient to define some more random vectors with varying levels of dependence in between the very dependent setting given by $\zbeta{\alpha}$ and the independent ground state $\zbeta{\emptyset}$. Concretely, for every $\beta \subseteq \alpha$, we define an \emph{intermediate state} $\zbeta{\beta}$ as a Gaussian vector in $\mathbb{R}^{|\alpha|}$ with mean $0$ and covariance $\covmatbeta{\beta}$ given by 
$$
    (\covmatbeta{\beta})_{u, v} = \begin{cases}
        \frac{1}{d}\langle \mathbf{x}_{u}, \mathbf{x}_{u}\rangle & \text{if } u, v \in \beta \\
        \sighat^2 & \text{if } u = v \text{ and } u, v \notin \beta \\
        0 & \text{otherwise}.
    \end{cases}
$$ Accordingly, $\zbeta{\beta}$ behaves like $\zbeta{\alpha}$ when restricted to the the entries in $\beta$, but has independent components like $\zbeta{\emptyset}$ otherwise. 
The big advantage defining these intermediate states is that now, 
$
    p^{|\alpha \setminus \beta|} \ \Expectedsub{\mathbf{x}}{{\prod}_{\ell \in \beta} \sigma(\langle \mathbf{x}_\ell, \mathbf{x} \rangle) } = \Pr{ \mathlarger\cap_{e \in \alpha} \{ \mathbf{z}_\beta(e)  \le \tau \} }
$, so our expression for $\Expectedsub{\mathbf{x}}{ \SW{K_{1,\alpha}} \mid \xr }$ turns into the succinct expression 
\begin{align*}
    \Expectedsub{\mathbf{x}}{ \SW{K_{1,\alpha}} \mid \xr } = \sum_{\beta \subseteq \alpha} (-1)^{|\alpha \setminus \beta|} \ \Pr{ \mathlarger\cap_{e \in \alpha} \{ \mathbf{z}_\beta(e)  \le \tau \} }.
\end{align*}

\textbf{Switching to Fourier space.}
It turns out that it is much easier to understand the cancellations happening in the above sum after applying a Fourier transform all our states $\zbeta{\beta}$. The reason for this is that the Gaussian distribution has a succinct Fourier representation (a.k.a. characteristic function) that explicitly allows us to express the dependencies between the entries of $\zbeta{\alpha}$ and relate them to the ground state.

The first step towards this is to express $\Expectedsub{\mathbf{x}}{ \SW{K_{1,\alpha}} \mid \xr }$ in terms of the \emph{characteristic functions} $\CF{\beta}$ (i.e. the Fourier transforms) of each of the states $\zbeta{\beta}$. The inversion theorem allows us to express
\begin{align*}
    \Expectedsub{\mathbf{x}}{ \SW{K_{1,\alpha}} \mid \xr } &= \frac{1}{(2\pi)^{|\alpha|}} \int_{\T} e^{-\trp{i\mathbf{t}}{\mathbf{x}}_{\tau}} \left( \sum_{\beta \subseteq \alpha} (-1)^{|\alpha \setminus \beta|}  \hspace{.1cm}\CF{\beta} \right) \prod_{e \in \mathbf{t}} \frac{\onebutnotactuallyone}{i\mathbf{t}(e)} \d \mathbf{t},
\end{align*} where $\onebutnotactuallyone$ is a limiting object in $h \rightarrow \infty$ that mostly behaves like 1 for our purposes, where $\mathbf{x}_{\tau}$ is a vector that contains $\tau$ in every coordinate, and where $\T = [-\infty, \infty]^{|\alpha|}$. 

\textbf{Cancellations after expanding characteristic functions.}
Now, we switched from an alternating sum of probabilities to an alternating sum of characteristic functions. 
The advantage of this is that it is now much easier to compare every term to the ground state given by $\CF{\emptyset}$ by considering the ratio $\CF{\beta}/\CF{\emptyset}$ in the sense that
\begin{align*}
    \left( \sum_{\beta \subseteq \alpha} (-1)^{|\alpha \setminus \beta|}  \hspace{.1cm}\CF{\beta} \right) = \CF{\emptyset} \left( \sum_{\beta \subseteq \alpha} (-1)^{|\alpha \setminus \beta|}  \hspace{.1cm}\frac{\CF{\beta}}{\CF{\emptyset}} \right) = \CF{\emptyset} \left( \sum_{\beta \subseteq \alpha} (-1)^{|\alpha \setminus \beta|}  \hspace{.1cm}\exp\left( - \frac{1}{2}\mathbf{t}^\top \! (\boldsymbol{\Sigma}_\beta - \sighat^2\mathbf{I}) \ \mathbf{t} \right) \right).
\end{align*}
Then, applying a Taylor expansion to the exponential in the sum, we get a sum of the form 
\begin{align*}
    \sum_{\beta \subseteq \alpha} (-1)^{|\alpha \setminus \beta|}  \hspace{.1cm}\exp\left( - \frac{1}{2}\mathbf{t}^\top \! (\boldsymbol{\Sigma}_\beta - \sighat^2\mathbf{I}) \ \mathbf{t} \right) &= \sum_{k=0}^\infty  \frac{(-1)^k}{2^k k!} \sum_{r_1, \ldots r_k \in \alpha \times \alpha} \sum_{\beta \subseteq \alpha} (-1)^{|\alpha \setminus \beta|} \phiterm{\beta}\\
    \text{with } \phiterm{\beta} &= \prod_{j = 1}^k (\boldsymbol{\Sigma}_\beta - \sighat^2\mathbf{I})_{r_j(1), r_j(2)} \mathbf{t}(r_j(1))\mathbf{t}(r_j(2)).
\end{align*} Now, if we recall the definition of the support $\supp(r_1, \ldots, r_k)$ as the set of elements $\gamma \subseteq \alpha$ covered by at least one of the $2$-tuples $r_1, \ldots, r_k$, then it turns out that every term corresponding to $\supp(r_1, \ldots, r_k) \neq \alpha$ cancels out. This phenomenon arises due to the special structure of the $\phiterm{\beta}$ resulting from our definition of the intermediate states $\zbeta{\beta}$. 

We refer to \Cref{sec:proofofprop} for the details underlying the above phenomena. The upshot is that the cancellation phenomena add the so-called \emph{coverage constraint} $\supp(r_1, \ldots, r_k) = \alpha$ to the sum over $r_1, \ldots, r_k$. In particular, this has the consequence that \emph{all terms corresponding to $k < \lceil |\alpha|/2 \rceil$ vanish} because satisfying the coverage constraint $\supp(r_1, \ldots, r_k) = \alpha$ is impossible.

\textbf{Reversing the Fourier transform.}
After our cancellations, the alternating sum inside of the integral evaluates as 
\begin{align*}
    \left( \sum_{\beta \subseteq \alpha} (-1)^{|\alpha \setminus \beta|}  \hspace{.1cm}\CF{\beta} \right) = \CF{\emptyset} \left( \frac{(-1)^
    \ell}{2^\ell \ell!} \sum_{ \substack{ r_1, \ldots r_\ell \in \alpha \times \alpha \\ \supp(r_1, \ldots, r_k ) = \alpha}} (-1)^{|\alpha \setminus \beta|} \phitermell{\alpha} + \resthat \right)
\end{align*} for $\ell \coloneqq \lceil |\alpha|/2 \rceil$ and remainder terms grouped together in $\resthat$. Due to the simple expression for $\CF{\emptyset}$ and the explicit expression of the $\phiterm{\beta}$ the integral over the first term above evaluates explicitly as a polynomial involving derivatives of the PDF of our ground state $\zbeta{\emptyset}$, which is equal to $\leading$ from \Cref{prop:fourierexpressionthing}. After carefully controlling the integral over the absolute value of all the remainder terms in $\resthat$, we further obtain an explicit bound on $|\rest|$ which is of lower order. For this step, we can avoid complicated sums over Hermite polynomials by exploiting some spectral properties of the matrix $\boldsymbol{\Sigma}_\beta - \sighat^2\mathbf{I}$ implied by conditioning on $\Seventshort$.

\section{Outlook}

Let us now give some ideas for possible extensions of this work. The most important thing to mention is that our main technical result \Cref{prop:fourierexpressionthing} can be extended to the non-bipartite case to give meaningful bounds at least for small $\alpha$ of size poly-logarithmic in $n$. On a technical level, this requires some further work in the sense that we cannot limit ourselves to work with characteristic functions of the Gaussian distribution anymore. However, it turns out that similar cancellations in Fourier space as described above occur if we split the characteristic function into the product of a ground state corresponding to independent edges and a remainder, and then apply a series expansions to this remainder but not to the ground state. The terms in these expansions can be expressed in terms of \emph{joint cumulants} whose properties enable similar cancellation phenomena as observed here, while the remainder terms and the resulting errors can be bounded explicitly at the expense of poly-logarithmic factors as long as $|\alpha| \le \text{polylog}(n)$. The resulting bounds on $\Expected{\SW{\alpha}}$ decay in powers of $1/\sqrt{d}$ while the exponent is proportional to $|\alpha|$ instead of $|V(\alpha)|$. This, in particular, improves the bounds given in \cite{bangachev2024fourier} and might enable new insights into statistical testing and estimation involving RGGs from the lens of \emph{low-degree polynomials}.
Moreover, the techniques underlying our \Cref{prop:fourierexpressionthing} are not limited to Gaussian latent vectors, but can be expected to continue to work on other product spaces like the torus or in case of an anisotropic Gaussian distribution of latent vectors.

Finally, it would be very interesting to see whether the results can be extended to the sparse case where $p=o(1)$. After all, finding the right information-theoretic thresholds for distinguishing RGGs from \Erdos-\Renyi~ graphs for all $p = o(1)$ is a prominent open problem that has attracted quite some attention over the last years \cite{bangachev2024fourier, liu2022testing}. The bipartite setting is thereby an important special case. Since our techniques yield tight testing thresholds for a different notion of sparsity ($q = o(1)$ rather than $p = o(1)$), it would be interesting to see whether they also help us understand the case of $p=o(1)$.


\section{Preliminaries}

\noindent We use the following auxiliary statements.

\begin{lemma}[Bound on Hermite polynomials, Inequality (1.2) in \cite{van1990new}]\label{lem:hermite}
    Let $\He_k(x) \coloneqq  (-1)^k e^{\frac{x^2}{2}} \frac{\d^k}{\d x^k} e^{-\frac{x^2}{2}}$. Then for every $k \in \mathbb{N}$ and $x \in \mathbb{C}$, 
    $$
        |\He_k(x)| \le 2^{k/2} \sqrt{k!} e^{\sqrt{2k}|x|}.
    $$
\end{lemma}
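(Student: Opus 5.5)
The plan is to use the probabilistic (Gaussian integral) representation of the probabilists' Hermite polynomials, which turns the bound into a moment estimate for a standard Gaussian:
$$
\He_k(x) = \mathbb{E}_{Z\sim\mathcal N(0,1)}\big[(x+iZ)^k\big].
$$

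\textbf{Step 1: the representation.} For real $x$ and $t$, the generating function of the $\He_k$ is $\sum_k \He_k(x)t^k/k! = e^{xt-t^2/2}$. This follows from the Rodrigues-type definition in the statement by a Taylor expansion of $e^{-(x-t)^2/2}$ in $t$. On the other side, $\mathbb{E}[e^{t(x+iZ)}]=e^{xt}e^{-t^2/2}$. Both sides are entire in $t$, so we may compare coefficients of $t^k$; the interchange of sum and expectation is justified by $\mathbb{E}[e^{|t|(|x|+|Z|)}]<\infty$. This gives the identity for real $x$. Both sides are polynomials in $x$, so the identity extends to all $x\in\mathbb{C}$.

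\textbf{Step 2: triangle inequality and moments.} For complex $x$ we have $|x+iZ|\le |x|+|Z|$, hence
$$
|\He_k(x)|\le \sum_{j=0}^k \binom{k}{j}|x|^j\,\mathbb{E}|Z|^{k-j}.
$$
By Cauchy--Schwarz, $\mathbb{E}|Z|^m\le (\mathbb{E}Z^{2m})^{1/2}=\sqrt{(2m-1)!!}$. Since $(2m-1)!! \le 2^m m!$, this gives $\mathbb{E}|Z|^m\le 2^{m/2}\sqrt{m!}$.

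\textbf{Step 3: combinatorial simplification.} Write
$$
\binom{k}{j}\sqrt{(k-j)!}=\frac{\sqrt{k!}}{j!}\sqrt{\frac{k!}{(k-j)!}}\le \frac{\sqrt{k!}}{j!}\,k^{j/2}.
$$
Substituting this and the moment bound from Step 2, the $j$-th term of the sum is at most $2^{k/2}\sqrt{k!}\,\big(\sqrt{k/2}\,|x|\big)^j/j!$. Summing over $j$ gives
$$
|\He_k(x)|\le 2^{k/2}\sqrt{k!}\,e^{\sqrt{k/2}\,|x|}\le 2^{k/2}\sqrt{k!}\,e^{\sqrt{2k}\,|x|}.
$$

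The only step requiring any care is Step 1: justifying the interchange of sum and expectation, and extending the identity from real to complex $x$. Both are routine. The remaining estimates are elementary, and the argument in fact has slack in the exponent.
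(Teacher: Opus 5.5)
Your proof is correct. It also takes a genuinely different route from the paper, which gives no argument at all and simply imports the inequality from van Eijndhoven and Meyers.

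You derive the bound from the Gaussian representation $\He_k(x)=\mathbb{E}[(x+iZ)^k]$. After that, complex arguments cost nothing, since only $|x+iZ|\le|x|+|Z|$ is used. Everything then reduces to absolute Gaussian moments plus the estimate $\binom{k}{j}\sqrt{(k-j)!}\le \sqrt{k!}\,k^{j/2}/j!$. The Fubini justification, the coefficient comparison, and the extension from real to complex $x$ are all fine.

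The citation buys brevity. Your route buys a self-contained proof in exactly the probabilists' normalization used in the statement, which matters a little here. The quoted bound looks like the physicists'-normalization inequality $|H_k(z)|\le 2^{k/2}\sqrt{k!}\,e^{\sqrt{2k}|z|}$. Under $\He_k(x)=2^{-k/2}H_k(x/\sqrt{2})$, that inequality becomes the stronger $|\He_k(x)|\le\sqrt{k!}\,e^{\sqrt{k}|x|}$, which explains the slack you noticed.

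Your method recovers this sharper form as well. Replace the Cauchy--Schwarz moment bound by the exact value $\mathbb{E}|Z|^m=2^{m/2}\Gamma(\frac{m+1}{2})/\sqrt{\pi}$. The duplication formula then gives $(\mathbb{E}|Z|^m)^2/m!=\Gamma(\frac{m+1}{2})/(\sqrt{\pi}\,\Gamma(\frac{m+2}{2}))\le 1$, and the same summation yields $\sqrt{k!}\,e^{\sqrt{k}|x|}$.

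For the paper's only use of the lemma (fixed real $x$, to get $|\phi^{(2k-1)}(x)|\le(Ck)^k$), any of these versions suffices.
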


\begin{lemma}[Strirling's approximation for the Gamma function]\label{lem:stirling}
    Let $\Gamma(s) \coloneqq \int_{0}^{\infty} x^{s-1}e^{-x} \d x$. Then, there is a constant $C > 0$ such that for every $s \in \mathbb{R}, s \ge 1$, $
        \Gamma(s) \le (C s)^{s}.
    $
\end{lemma}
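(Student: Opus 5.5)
The bound can be proved directly from the integral definition, without Stirling's asymptotic formula. The idea is to split the factor $e^{-x}$ as $e^{-x/2}\cdot e^{-x/2}$. One half is used to bound the polynomial factor $x^{s-1}$ uniformly, and the other half leaves an integrable tail.

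\emph{Step 1 (uniform bound on the polynomial factor).} For $s \ge 1$ consider $g(x) \coloneqq x^{s-1} e^{-x/2}$ on $[0,\infty)$. If $s = 1$ then $g(x) = e^{-x/2} \le 1$. If $s > 1$, then $\log g(x) = (s-1)\log x - x/2$ is concave, and its derivative $(s-1)/x - 1/2$ vanishes at $x^* = 2(s-1)$. Hence
$$
\sup_{x \ge 0} g(x) = \bigl(2(s-1)\bigr)^{s-1} e^{-(s-1)} \le (2s)^{s-1}.
$$
With the convention $0^0 = 1$, the bound $\sup_{x\ge 0} g(x) \le (2s)^{s-1}$ therefore holds for every $s \ge 1$.

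\emph{Step 2 (integrate the remaining factor).} Using Step 1,
$$
\Gamma(s) = \int_0^\infty g(x)\, e^{-x/2}\, \d x \le (2s)^{s-1} \int_0^\infty e^{-x/2}\, \d x = 2\,(2s)^{s-1} = \frac{(2s)^s}{s}.
$$

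\emph{Step 3 (conclude).} Since $s \ge 1$, we have $(2s)^s / s \le (2s)^s$. The lemma therefore holds with $C = 2$.

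There is no real obstacle in this argument. The only point needing care is the degenerate case $s = 1$, where the maximizer $x^* = 2(s-1)$ sits at the boundary $x = 0$; this is handled by the convention $0^0 = 1$, or equivalently by the direct computation $\Gamma(1) = 1 \le 2$.
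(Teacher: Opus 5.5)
Your proof is correct, and it takes a different route from the paper. The paper gives no argument at all: it simply cites this as Stirling's approximation.

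Your argument is elementary and self-contained. You split $e^{-x} = e^{-x/2}\cdot e^{-x/2}$, bound $\sup_{x\ge 0} x^{s-1}e^{-x/2} \le (2s)^{s-1}$ using concavity of its logarithm, and integrate the remaining $e^{-x/2}$. This gives $\Gamma(s) \le (2s)^s/s$, so the lemma holds with the explicit constant $C=2$. You also handle the boundary case $s=1$ correctly.

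Citing Stirling gives the sharp growth rate $\Gamma(s) \sim \sqrt{2\pi/s}\,(s/e)^s$ and hence a better constant. However, an asymptotic statement alone does not give a bound for \emph{all} $s \ge 1$. For that one needs a non-asymptotic form such as $\Gamma(s) \le \sqrt{2\pi/s}\,(s/e)^s e^{1/(12s)}$, which the paper leaves implicit.

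Your bound loses roughly a factor $(2e)^s$ against Stirling, which does not matter here. Every application in the paper (bounding $\Gamma(\ell/2)$, $\Gamma(k/2)$, or $\Gamma((2k+|\alpha|)/2)$, always with argument at least $1$) only needs some absolute constant $C$. So your proof fully suffices and has the advantage of being explicit.
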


\begin{lemma}[Gaussian Hypercontractivity, \cite{o2014analysis}, Theorem 9.21]\label{lem:hypercont}
    Let $q \ge 2$ and let $f$ be an arbitrary polynomial of degree $k \in \mathbb{N}$ in i.i.d. standard Gaussian inputs $\mathbf{x}$. Then,
    $$
        \Expectedsub{\mathbf{x}}{|f(\mathbf{x})|^q} \le (q-1)^{qk/2} \Expectedsub{\mathbf{x}}{f(\mathbf{x})^2}^{q/2}.
    $$
\end{lemma}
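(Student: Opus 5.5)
The plan is to deduce the stated moment bound from Nelson's hypercontractivity of the Ornstein--Uhlenbeck semigroup, combined with the Hermite (Wiener chaos) decomposition of $f$.

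\textbf{Step 1: the semigroup and the chaos decomposition.} Let $\mathbf{x} \in \mathbb{R}^N$ be standard Gaussian. For $\rho \in [0,1]$ define the noise operator
$$
T_\rho f(\mathbf{x}) \coloneqq \Expectedsub{\mathbf{y}}{ f\big(\rho \mathbf{x} + \sqrt{1-\rho^2}\,\mathbf{y}\big) },
$$
where $\mathbf{y}$ is an independent standard Gaussian vector. Expand $f$ in the multivariate Hermite basis $\prod_i \He_{a_i}(x_i)$. Since $f$ has degree $k$, this gives
$$
f = \sum_{j=0}^k f_j,
$$
where $f_j$ collects the basis terms of total degree $j$. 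The $f_j$ are orthogonal in $L^2$, and $T_\rho f_j = \rho^j f_j$ because the Hermite polynomials are eigenfunctions of the semigroup.

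\textbf{Step 2: reduction to Nelson's inequality.} Nelson's theorem states that for $q \ge 2$ and $\rho \le 1/\sqrt{q-1}$,
$$
\|T_\rho g\|_q \le \|g\|_2 .
$$
Fix $\rho = (q-1)^{-1/2}$ and put $g \coloneqq \sum_{j \le k} \rho^{-j} f_j$, which is again a polynomial of degree at most $k$. Then $T_\rho g = f$, so by orthogonality
$$
\|f\|_q \le \|g\|_2 = \Big( \sum_{j} \rho^{-2j}\, \|f_j\|_2^2 \Big)^{1/2} \le \rho^{-k}\, \|f\|_2 = (q-1)^{k/2}\, \|f\|_2 .
$$
Raising both sides to the $q$-th power gives exactly
$$
\Expectedsub{\mathbf{x}}{|f(\mathbf{x})|^q} \le (q-1)^{qk/2}\, \Expectedsub{\mathbf{x}}{f(\mathbf{x})^2}^{q/2}.
$$

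\textbf{Step 3: proving Nelson's inequality (the main obstacle).} Everything above is bookkeeping; the real content is Nelson's inequality. I would prove it in three parts.

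First, prove the Boolean analogue for uniform $\pm1$ bits: $\|T_\rho g\|_q \le \|g\|_2$ for $\rho \le 1/\sqrt{q-1}$. The one-bit case is the Bonami--Beckner two-point inequality, which reduces to an elementary inequality in one real variable.

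Second, tensorize the one-bit inequality to $n$ bits by induction on the number of coordinates, using Minkowski's integral inequality (valid since $q \ge 2$).

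Third, pass to the Gaussian case. Approximate each $x_i$ by $m^{-1/2}\sum_{s \le m} b_{i,s}$ with i.i.d. uniform bits $b_{i,s}$. A polynomial $g$ in these normalized sums is a multilinear-plus-lower-order polynomial in the bits, and the Boolean noise operator acting on it converges to $T_\rho$ acting on $g$. By the central limit theorem, together with uniform integrability (all moments of a fixed-degree polynomial are bounded), the $L^q$ and $L^2$ norms converge. This gives the inequality for polynomial $g$, which is all that Step 2 needs.

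The delicate points are verifying the two-point inequality and justifying the limit. If the limit becomes messy, I would instead go through the Gaussian log-Sobolev inequality together with Gross's argument: differentiate $t \mapsto \|T_{e^{-t}} g\|_{q(t)}$ with $q(t) = 1 + e^{2t}$ and show it is non-increasing.
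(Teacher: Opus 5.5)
Your proposal is correct and reconstructs the standard argument behind the result the paper cites; the paper gives no proof of its own. Step~2 is exactly the textbook derivation of the degree-$k$ moment bound from $(2,q)$-hypercontractivity, applied to $T_{\sqrt{q-1}}f$, and Step~3 (the two-point inequality, tensorization via Minkowski, and transfer to Gaussian space by the CLT) is the usual route to Gaussian hypercontractivity in that reference. The one step you leave compressed, convergence of the Boolean noise operator to $T_\rho$ in $L^q$, is routine: conditionally on the bits, the noise in each block has moments that converge to those of $\mathcal{N}(0,1-\rho^2)$ up to $o(1)\cdot\mathrm{poly}(S)$ corrections, where $S$ is the vector of block sums.
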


\begin{theorem}{Berry-Esseen theorem, \cite{klenke2008probability}}\label{thm:berryesseen}
    Let $X_1, \ldots, X_d$ be i.i.d. random variables with $\Expected{X_1} = 0$, $\Expected{X_1^2} = \sigma^2$ and $\Expected{X_1^3} < \infty$. Let $X = \frac{1}{\sqrt{d\sigma^2}} \sum_{i = 1}^d X_i$ and let $\Phi$ denote the CDF of the standard Gaussian distribution. 
    Then, 
    \begin{align*}
        \sup_x | \Pr{X \le x} - \Phi(x) | \le \frac{0.8\Expected{X_1^3}}{\sigma^3 \sqrt{d}}.
    \end{align*}
\end{theorem}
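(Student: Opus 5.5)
This statement is quoted from \cite{klenke2008probability}, so in the paper it may be taken as given. If I had to prove it, I would use the classical Fourier-analytic route via Esseen's smoothing inequality. I read the third-moment quantity as the absolute moment $\rho_3 := \Expected{|X_1|^3}$, since the signed moment $\Expected{X_1^3}$ can vanish, and the bound cannot hold for it in general. By scaling I would assume $\sigma = 1$. Let $\varphi$ be the characteristic function of $X_1$, so that $X$ has characteristic function $\varphi(t/\sqrt d)^d$.

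The first step is the smoothing inequality. For any distribution function $F$ and any $G$ with $G' = g$, $|g| \le m$, and matching limits at $\pm\infty$, and for every $T>0$,
$$\sup_x |F(x)-G(x)| \le \frac1\pi \int_{-T}^{T} \left|\frac{\hat F(t)-\hat G(t)}{t}\right| \d t + \frac{24 m}{\pi T}.$$
I would prove this by convolving both $F$ and $G$ with a Fejér-type kernel whose Fourier transform is supported on $[-T,T]$. The inversion formula bounds the smoothed difference by the integral term. The unsmoothing error is controlled by $m$ via a standard extremal argument: one looks at the point where $|F-G|$ is nearly maximal and uses the monotonicity of $F$. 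I would apply this with $F$ the CDF of $X$, $G = \Phi$, $m = (2\pi)^{-1/2}$, and $T = c\sqrt d/\rho_3$.

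The second step is the characteristic-function estimate: for $|t| \le T$,
$$\bigl|\varphi(t/\sqrt d)^d - e^{-t^2/2}\bigr| \le C\frac{\rho_3 |t|^3}{\sqrt d}\, e^{-t^2/4}.$$
I would get this from the Taylor bound $|\varphi(s) - 1 + s^2/2| \le \rho_3|s|^3/6$. I would also need $|\varphi(s)| \le e^{-s^2/3}$ for $|s| \le 1/\rho_3$, using that $\rho_3 \ge 1$ by Lyapunov's inequality. Then I would telescope with $|a^d - b^d| \le d\,|a-b|\,\max(|a|,|b|)^{d-1}$, where $a = \varphi(t/\sqrt d)$ and $b = e^{-t^2/(2d)}$. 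For the intermediate range $\sqrt d/\rho_3 \lesssim |t| \le T$, I would handle things by symmetrization: passing to $|\varphi|^2$, the characteristic function of $X_1 - X_1'$, gives the same Gaussian-type decay. Plugging this estimate into the smoothing inequality makes the integral term $O(\rho_3/\sqrt d)\int |t|^2 e^{-t^2/4}\,\d t$, and the second term is $O(\rho_3/\sqrt d)$ by the choice of $T$.

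The main obstacle is the constant $0.8$. The argument above gives $C\rho_3/\sqrt d$ with some absolute $C$ of moderate size, and for the use in this paper any absolute constant suffices. Getting down to $0.8$ would require optimizing the smoothing kernel and tracking every constant carefully, or instead running Stein's method with a careful zero-bias coupling. I would not attempt that here and would simply cite \cite{klenke2008probability} for the sharp numerical constant.
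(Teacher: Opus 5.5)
Your proposal is correct and agrees with the paper, which gives no proof and simply imports the theorem from \cite{klenke2008probability}. Your Esseen-smoothing sketch is the standard argument behind it and yields an absolute constant, which is all the paper uses, since \Cref{lem:sigbound} only needs the $C/\sqrt{d}$ rate (with $T=c\sqrt{d}/\rho_3$ and $c\le 1$, the Taylor bound already gives Gaussian decay on all of $|t|\le T$, so your symmetrization step is unnecessary).

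Your reading of the moment as $\mathbb{E}[|X_1|^3]$ is also right, and in fact necessary. As printed, the bound is false: Rademacher summands make the right-hand side $0$. In the paper's own application the summands $x_iy_i$ are symmetric, so the signed version would wrongly force $d^{-1/2}\langle \mathbf{x},\mathbf{y}\rangle$ to be exactly Gaussian.
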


\begin{theorem}[Fourier inversion, Theorem 3, Chapter XV.3 in \cite{feller1991introduction}]\label{thm:inversion}
    If $\varphi$ is the characteristic function of a random variable $X$ in $\mathbb{R}^k$ such that $|\varphi(\mathbf{t})|$ is integrable, then the density $f$ of $X$ is given by
    \begin{align*}
        f(\mathbf{x}) = \frac{1}{(2\pi)^k} \int_{[-\infty, \infty]^k} e^{i\trp{\mathbf{t}}{\mathbf{x}}} \varphi(\mathbf{t}) \d \mathbf{t}
    \end{align*}
\end{theorem}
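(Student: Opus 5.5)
The statement is the classical inversion theorem. I would prove it by Gaussian smoothing: perturb $X$ by a small independent Gaussian, for which an inversion formula holds trivially, and then let the perturbation vanish.

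\textbf{Step 1 (smoothed inversion).} Fix $\varepsilon>0$ and let $\mathbf{Z}\sim\mathcal{N}(0,\mathbf{I}_k)$ be independent of $X$. Set $X_\varepsilon \coloneqq X+\varepsilon\mathbf{Z}$. Then $X_\varepsilon$ has the bounded continuous density
$$f_\varepsilon(\mathbf{x})=\mathbb{E}\big[\gamma_\varepsilon(\mathbf{x}-X)\big],$$
where $\gamma_\varepsilon$ is the $\mathcal{N}(0,\varepsilon^2\mathbf{I})$ density. Next I would use the self-duality of the Gaussian:
$$\gamma_\varepsilon(\mathbf{y})=(2\pi)^{-k}\int e^{\pm i\trp{\mathbf{t}}{\mathbf{y}}}\,e^{-\varepsilon^2|\mathbf{t}|^2/2}\,\d\mathbf{t}.$$
This follows from the one-dimensional computation of the Gaussian characteristic function and a product over coordinates. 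Substituting it into $f_\varepsilon$ and applying Fubini, which is legitimate because the integrand is bounded by $e^{-\varepsilon^2|\mathbf{t}|^2/2}$ and so is integrable in $(\mathbf{t},X)$, gives
$$f_\varepsilon(\mathbf{x})=(2\pi)^{-k}\int e^{\pm i\trp{\mathbf{t}}{\mathbf{x}}}\,\varphi(\mathbf{t})\,e^{-\varepsilon^2|\mathbf{t}|^2/2}\,\d\mathbf{t}.$$
This holds with no integrability assumption on $\varphi$. The sign in the exponent matches the paper's formula under its convention for $\varphi$; with the convention $\varphi(\mathbf{t})=\mathbb{E}[e^{i\trp{\mathbf{t}}{X}}]$ the kernel is $e^{-i\trp{\mathbf{t}}{\mathbf{x}}}$, and one simply replaces $\mathbf{t}$ by $-\mathbf{t}$.

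\textbf{Step 2 (limit $\varepsilon\to 0$).} Define
$$f(\mathbf{x})\coloneqq(2\pi)^{-k}\int e^{\pm i\trp{\mathbf{t}}{\mathbf{x}}}\,\varphi(\mathbf{t})\,\d\mathbf{t}.$$
This function is well defined, bounded and continuous, since $|\varphi|$ is integrable. The difference from $f_\varepsilon$ satisfies
$$|f_\varepsilon(\mathbf{x})-f(\mathbf{x})|\le(2\pi)^{-k}\int|\varphi(\mathbf{t})|\big(1-e^{-\varepsilon^2|\mathbf{t}|^2/2}\big)\,\d\mathbf{t}.$$
By dominated convergence the right-hand side tends to $0$, so $f_\varepsilon\to f$ uniformly in $\mathbf{x}$. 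In particular $f$ is real, and $f\ge 0$ because each $f_\varepsilon\ge 0$.

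\textbf{Step 3 (identifying the limit as the density of $X$).} Take any bounded box $A$ whose boundary has zero $X$-mass; such boxes form a $\pi$-system that generates the Borel sets. Since $X_\varepsilon\to X$ almost surely, the portmanteau theorem gives $\Pr{X_\varepsilon\in A}\to\Pr{X\in A}$. On the other hand $\Pr{X_\varepsilon\in A}=\int_A f_\varepsilon\to\int_A f$, by uniform convergence on the bounded set $A$. Hence $\Pr{X\in A}=\int_A f$ for all such boxes. Letting boxes increase to $\mathbb{R}^k$ shows $\int f=1$, so $f$ is a probability density. Two probability measures that agree on a generating $\pi$-system are equal, so $X$ has density $f$, which is the claim.

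The only step I expect to need care is Step 3: passing from uniform convergence of the smoothed densities to identifying the law of $X$. The points to handle are the restriction to continuity boxes and the mass at infinity; the latter is controlled because the mass of $f$ on large boxes tends to $1$. Steps 1 and 2 are routine applications of Fubini and dominated convergence.
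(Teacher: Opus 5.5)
Your proof is correct and follows essentially the same route as the classical proof in Feller that the paper cites; the paper gives no proof of its own. That route is: smooth by an independent $\mathcal{N}(0,\varepsilon^2\mathbf{I})$, get the inversion formula for the smoothed density via Fubini, let $\varepsilon\to 0$ using integrability of $|\varphi|$, and identify the law on continuity boxes. Your sign remark is also right: with $\varphi(\mathbf{t})=\mathbb{E}[e^{i\mathbf{t}^\top X}]$, which is the convention the paper uses for $\varphi_\beta$ in Lemma~\ref{lem:swfourier2}, the kernel must be $e^{-i\mathbf{t}^\top\mathbf{x}}$, and that is the form the paper actually applies there, even though the displayed statement has $e^{+i\mathbf{t}^\top\mathbf{x}}$.
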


We will also use the following lemma that expresses the difference of the CDF of a standard Gaussian with that of variance $\sigma$. 

\begin{lemma}\label{lem:divergenceofgaussianswithdifferentvariances}
    Denote by $\text{sgn}(x)$ the sign of a number $x \in \mathbb{R}, x\neq 0$.
    Given any $0 < \sigma < 1$, let further $X \sim \gauss{1}$ and $Z \sim \gauss{\sigma^2}$ with CDFs $\Phi$ and $\Phi_\sigma$, respectively. Then for every $x \in \mathbb{R}, x \neq 0$, and assuming that $|1 - \sigma^2|$ is small enough (as a function of $x$), there are constants $c, C > 0$ independent on $\sigma$ (but dependent on $x$) such that
    $$
        c \ \mathrm{sgn}(x) \cdot (1 - \sigma^2) \le \Phi_\sigma(x) -  \Phi(x) \le C \ \mathrm{sgn}(x) \cdot (1 - \sigma^2).
    $$
\end{lemma}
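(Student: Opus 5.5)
The idea is to reduce the comparison to a one-variable mean value argument in the variance parameter. Since $Z \sim \gauss{\sigma^2}$ has the same law as $\sigma X$, we have $\Phi_\sigma(x) = \Phi(x/\sigma)$. So I would introduce the auxiliary function
$$
g(s) \coloneqq \Phi\big(x s^{-1/2}\big), \qquad s > 0,
$$
so that $\Phi_\sigma(x) - \Phi(x) = g(\sigma^2) - g(1)$. The function $g$ is smooth on $(0,\infty)$, and by the chain rule
$$
g'(s) = -\tfrac{x}{2}\, s^{-3/2}\, \phi\big(x s^{-1/2}\big),
$$
where $\phi$ denotes the standard Gaussian density.

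By the mean value theorem there is some $\xi \in (\sigma^2, 1)$ with
$$
\Phi_\sigma(x) - \Phi(x) = g'(\xi)(\sigma^2 - 1) = \mathrm{sgn}(x)\,(1-\sigma^2)\cdot \tfrac{|x|}{2}\, \xi^{-3/2}\, \phi\big(x\xi^{-1/2}\big).
$$
Because $0<\sigma<1$, the factor $1-\sigma^2$ is positive. The sign of the whole expression is therefore exactly $\mathrm{sgn}(x)$, and it remains to bound the factor
$$
h(\xi) \coloneqq \tfrac{|x|}{2}\, \xi^{-3/2}\, \phi\big(x\xi^{-1/2}\big)
$$
from above and below by positive constants depending only on $x$.

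For this I would use continuity of $h$ at $\xi=1$, where $h(1) = \tfrac{|x|}{2}\phi(x) > 0$ because $x \neq 0$. The assumption that $|1-\sigma^2|$ is small enough as a function of $x$ is used here: we may require $\sigma^2 \ge 1-\delta_x$, where $\delta_x$ is chosen so that $h(\xi) \in [\tfrac12 h(1), 2h(1)]$ for all $\xi \in [1-\delta_x, 1]$. Setting $c = \tfrac{|x|}{4}\phi(x)$ and $C = |x|\phi(x)$ then gives both inequalities, with constants independent of $\sigma$.

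I do not expect a genuine obstacle. The only point needing care is that the constants really depend on $x$, and they degenerate as $x \to 0$, where $\Phi_\sigma(0)=\Phi(0)$. This is exactly why the lemma excludes $x=0$ and lets $c, C$ depend on $x$.
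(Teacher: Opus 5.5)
Your proof is correct, but it takes a different route from the paper. The paper writes $\Phi_\sigma$ by Fourier inversion, factors $e^{-\sigma^2 t^2/2} = e^{-t^2/2} e^{(1-\sigma^2)t^2/2}$, Taylor-expands the second factor and integrates term by term. This gives the full series $\Phi_\sigma(x) = \Phi(x) + \sum_{k \ge 1} \frac{(-1)^k}{2^k k!}(1-\sigma^2)^k \phi^{(2k-1)}(x)$. The $k=1$ term $\tfrac12 (1-\sigma^2)\, x\phi(x)$ fixes the sign, and the tail $k \ge 2$ is shown to be $O((1-\sigma^2)^2)$ via the Hermite bound of \Cref{lem:hermite}; a dominated-convergence argument is needed to swap sum and integrals.

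You instead use $\Phi_\sigma(x) = \Phi(x/\sigma)$ and the mean value theorem in $s = \sigma^2$. This produces the same leading coefficient $\tfrac{x}{2}\phi(x)$, but replaces the series and its convergence bookkeeping by continuity of one explicit function. Your route is shorter and gives explicit constants. It also works verbatim for $\sigma^2 > 1$ and for two arbitrary variances near $1$, namely $|\Phi_{s_1}(x) - \Phi_{s_2}(x)| \le \sup_{\xi}|g'(\xi)|\,|s_1^2 - s_2^2|$ with $\xi$ between $s_1^2$ and $s_2^2$. That is the form in which the lemma is actually used in \Cref{lem:sigbound} and \Cref{clm:stars}.

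What the paper's route buys is methodological. It is the one-dimensional prototype of the characteristic-function expansion in Section~\ref{sec:proofofprop}, where the perturbation $\boldsymbol{\Sigma}_\beta - \sighat^2 \mathbf{I}$ is a general matrix and no scaling identity is available.

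One bookkeeping point: for $x < 0$ the factor $\mathrm{sgn}(x)(1-\sigma^2)$ is negative. Your two constants must then be swapped (take $c = |x|\phi(x)$ and $C = \tfrac{|x|}{4}\phi(x)$) for the displayed inequalities to hold as written. The paper's proof glosses over the same point.
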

\begin{proof}
    Fourier inversion yields that 
    $$
        \Phi_\sigma(x) = \frac{1}{2\pi} \int_{-\infty}^x \int_{-\infty}^\infty e^{-i\! \mathrm{t} \! x} e^{-\frac{1}{2}\sigma \mathrm{t}^2} \d \mathrm{t}.
    $$
    From a Taylor expansion, we get 
    $$
        e^{-\frac{1}{2}\sigma \mathrm{t}^2} = e^{-\frac{1}{2}\mathrm{t}^2} e^{\frac{1}{2}\mathrm{t}^2 - \frac{1}{2}\sigma \mathrm{t}^2} = e^{-\frac{1}{2}\mathrm{t}^2} \left( 1 + \sum_{k=1}^\infty \frac{1}{2^k k!} (1 - \sigma^2)^k\mathrm{t}^{2k} \right) = e^{-\frac{1}{2}t^2} \left( 1 + \sum_{k=1}^\infty \frac{(-1)^k}{2^k k!} (1 - \sigma^2)^k (i\mathrm{t})^{2k} \right).
    $$
    Thus,
    \begin{align*}
        \Phi_\sigma(x) &= \frac{1}{2\pi}\int_{-\infty}^x \int_{-\infty}^\infty e^{-i\! \mathrm{t} \! x} e^{-\frac{1}{2} \mathrm{t}^2} \d \mathrm{t} + \frac{1}{2\pi} \sum_{k=1}^\infty \frac{(-1)^k}{2^k k!} \int_{-\infty}^x \int_{-\infty}^\infty e^{-i\! \mathrm{t} \! x} (1 - \sigma^2)^k (i\mathrm{t})^{2k} \d \mathrm{t} \\
        &= \Phi(x) + \sum_{k=1}^\infty \frac{(-1)^k}{2^kk!} ( 1 - \sigma^2)^k \phi^{(2k-1)} (x).
    \end{align*} where $\phi^{(k)}$ is the $k$-th derivative of the standard Gaussian density and where the exchange of integral and sum is justified by dominated convergence. We remark that exchanging the integrals and the infinite sum is justified here, as we make formally clear in \Cref{sec:proofofprop}, where similar arguments are presented for the more general, multivariate case. 
    
    Now, note that due to $x \neq 0$, the first term (corresponding to $k = 1$) is non-zero as $\phi^{(1)}(x) \neq 0$. Moreover, $\mathrm{sgn}(\phi^{(1)}(x)) = -\mathrm{sgn}(x)$, so the sign of the first term is $\mathrm{sgn}(x)$. Using further that $|\phi^{(2k-1)}(x)| = |\He_{2k-1}(x)|e^{-\frac{1}{2}x^2} \le (Ck)^k$ by \Cref{lem:hermite}, we get that the sum of all terms starting at $k = 2$ is at most $C' (1 - \sigma^2)^2$ in absolute value for some constant $C'$ once $|1 - \sigma|$ is small enough. For sufficiently small $|1 - \sigma|$, the first term then dominates and the lemma follows. \qedhere 

\end{proof}

Finally we will make use of the following lemma, stating that the reference variance $\sighat$ is close to one.
\begin{lemma}\label{lem:sigbound}
    Let any fixed $p \in (0,1)$ be given. 
    Define $\sighat =\sighat(p, d)$\footnote{note that $\sighat$ depends on $d$ since $\tau$ used in the following definition depends on $d$} such that for a random variable $Z \sim \mathcal{N}(0, \sighat^2)$,
    $
        \Pr{Z \le \tau} = p.
    $ Then, there is a constant $C$ independent of $d$ such that for all sufficiently large $d$,
    $
        |\sighat^2 - 1| \le \frac{C}{\sqrt{d}}.
    $
\end{lemma}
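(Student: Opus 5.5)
The plan is to relate $\tau$ to the quantile of the standard Gaussian, and then transfer that relation to $\sighat$. Let $z_p \coloneqq \Phi^{-1}(p)$, a fixed constant depending only on $p$. By definition $\sighat$ satisfies $\Phi(\tau/\sighat) = p$, so
\[
\tau = \sighat \, z_p .
\]
If $p = \frac{1}{2}$, then $z_p = 0$. In this case $\tau = 0$ by the symmetry of $d^{-1/2}\langle \mathbf{x}_u, \mathbf{x}_v\rangle$, so $\sighat$ is not pinned down by the defining equation and we may take $\sighat = 1$. Hence we assume $p \neq \frac{1}{2}$, so $z_p \neq 0$, and it suffices to show
\[
|\tau - z_p| \le \frac{C}{\sqrt{d}} .
\]
Indeed, $\sighat = \tau / z_p$ then gives $|\sighat - 1| \le C/(|z_p|\sqrt{d})$, and therefore $|\sighat^2 - 1| \le C'/\sqrt{d}$ once $d$ is large enough that $\sighat$ lies in $[1/2, 2]$.

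To bound $|\tau - z_p|$ I use the Berry--Esseen theorem (\Cref{thm:berryesseen}). Write
\[
d^{-1/2}\langle \mathbf{x}_u, \mathbf{x}_v \rangle = d^{-1/2}\sum_{i=1}^d X_i, \qquad X_i = \mathbf{x}_u(i)\,\mathbf{x}_v(i).
\]
The $X_i$ are i.i.d. with mean $0$ and variance $1$. Their third absolute moment is finite, namely $\mathbb{E}|X_1|^3 = (\mathbb{E}|g|^3)^2$ for a standard Gaussian $g$. The theorem, applied with the third absolute moment, gives $|F_d(x) - \Phi(x)| \le C_0/\sqrt{d}$ uniformly in $x$, where $F_d$ is the CDF of the normalized inner product. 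Evaluating at $x = \tau$ and using $F_d(\tau) = p = \Phi(z_p)$ yields
\[
|\Phi(\tau) - \Phi(z_p)| \le \frac{C_0}{\sqrt{d}} .
\]

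The remaining step, and the only real obstacle, is to invert $\Phi$, which requires the density $\phi$ to be bounded below on the relevant interval. First, $\tau$ is confined to a compact neighbourhood of $z_p$ for large $d$. Since $\Phi$ is strictly increasing and continuous, $|\Phi(\tau) - p| \le C_0/\sqrt{d}$ forces
\[
\tau \in \left[\Phi^{-1}\!\big(p - C_0/\sqrt{d}\big),\ \Phi^{-1}\!\big(p + C_0/\sqrt{d}\big)\right] \subseteq [z_p - 1,\, z_p + 1]
\]
for $d$ large. On that interval $\phi \ge c_p \coloneqq \phi(|z_p| + 1) > 0$. By the mean value theorem,
\[
|\tau - z_p| \le \frac{C_0}{c_p\sqrt{d}} ,
\]
which completes the argument with a constant $C$ depending only on $p$.
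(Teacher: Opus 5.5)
Your proof is correct, and it takes a somewhat different, more elementary route than the paper. Both arguments start from Berry--Esseen to get $|\Phi(\tau)-p|\le C_0/\sqrt{d}$. The difference lies in how this CDF-level error is turned into a bound on $\sighat$.

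The paper appeals to \Cref{lem:divergenceofgaussianswithdifferentvariances}, a series expansion of $\Phi_\sigma(x)-\Phi(x)$ in powers of $1-\sigma^2$ that gives two-sided bounds linear in $1-\sigma^2$. It then argues that some $\sighat$ with $|\sighat^2-1|=O(d^{-1/2})$ can absorb the discrepancy, provided $\tau$ stays away from $0$.

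You instead use the exact scaling identity $\Phi_{\sighat}(\tau)=\Phi(\tau/\sighat)$. This pins down $\sighat=\tau/z_p$ and reduces everything to a quantile-perturbation estimate for $\Phi^{-1}$ via the mean value theorem.

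What your route buys:
\begin{itemize}
\item It is self-contained.
\item It gives an explicit constant, of order $C_0/(|z_p|\,\phi(|z_p|+1))$. This visibly degenerates as $p\to\tfrac12$, mirroring the paper's requirement that $\tau$ be bounded away from $0$.
\item It sidesteps points the paper glosses over. The constants in \Cref{lem:divergenceofgaussianswithdifferentvariances} depend on $x$, yet the lemma is applied at the $d$-dependent point $\tau$. That lemma is also stated only for $\sigma<1$, although $\sighat$ may need to exceed $1$ for some $p$. Finally, a monotonicity argument is tacitly needed to know that $\sighat$ lies in the small-$|1-\sigma^2|$ regime where the lemma applies.
\end{itemize}
The paper's route has the advantage of reusing a lemma it needs anyway in \Cref{clm:stars}.

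As a small bonus, you correctly use the third \emph{absolute} moment in Berry--Esseen. The version quoted in \Cref{thm:berryesseen} writes $\mathbb{E}[X_1^3]$, which is zero for $X_1=\mathbf{x}_u(1)\mathbf{x}_v(1)$.
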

\begin{proof}
    We wish to show that 
    $
        \Prnop{ d^{-1/2}\langle \mathbf{x}, \mathbf{y} \rangle \le \tau} = \Phi_{\sighat}(\tau)
    $ for $\mathbf{x}, \mathbf{y} \sim \mathcal{N}(0, \mathbf{I}_d)$ and some $\sighat$ with $|\sighat^2 - 1| \le C/\sqrt{d}$. To this end, we use \Cref{thm:berryesseen} to get that 
    $
        | \Prnop{ d^{-1/2} \langle \mathbf{x}, \mathbf{y} \rangle \le \tau} - \Phi(\tau)  | \le C/\sqrt{d}.
    $ By \Cref{lem:divergenceofgaussianswithdifferentvariances} and for $d$ large enough, we can choose a $\sighat^2 $ with $|\sighat^2 - 1| \le C'/\sqrt{d}$ to counterbalance the above differences whenever $\tau$ is bounded away from $0$ for all sufficiently large $d$. Note that this is given whenever $p \neq \frac{1}{2}$. For $p = \frac{1}{2}$, the statement of the lemma is trivial since $\tau = 0$ for all $d$, so $\sighat = 1$.
\end{proof}

\section{Information-Theoretic Hardness}\label{sec:info}

We can use the methods described previously not only for results on low-degree hardness, but also for deriving information-theoretic lower bounds. Concretely, 
this section is devoted to proving the following
\begin{theorem}[Information-theoretic hardness for unknown masks]\label{thm:hardnessknown}
    Consider any fixed $p \in (0,1)$. Then 
    $$
        \dtv{\matnoiseedge}{ \ \gaussmat} = o(1)
    $$ under one of the following conditions. 
    \begin{enumerate}
        \item[(i)] $d \gg \bound$ and $d \gg \boundwedges$.
        \item[(ii)] $d \gg \bound$ and $p = 1/2$.
    \end{enumerate}
\end{theorem}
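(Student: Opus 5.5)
We follow the conditional second moment strategy outlined in the technical overview. Fix $\rho = C_0\sqrt{\log n}$ with $C_0$ large. By Chernoff and a union bound over the $n^2$ pairs in $R$, $\Pr{\overline{\Seventshort}} = o(1)$. Hence
\[
\dtvnop{\matnoiseedge}{\gaussmat} \le \dtvnop{\L(\mu\mid\Seventshort)}{\nu} + o(1),
\]
with $\mu = \matnoiseedge$ and $\nu = \gaussmat$. We bound the first term via Pinsker, $2\,\dtvnop{\cdot}{\cdot}^2 \le \dchisq{\cdot}{\cdot}$.

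\textbf{Reduction to stars.} We write $\L(\mu\mid\Seventshort)$ as a mixture over the latent randomness $(\xr,\mathbf{M})$ with $\xr$ conditioned on $\Seventshort$, and integrate out $\xl$ column by column. Given $\xr$ and $\mathbf{M}$, the $m$ columns are independent. For a column $v$ with mask support $A_v$, the likelihood ratio is $\prod$-expandable as
\[
\sum_{\alpha\subseteq A_v} \Expectedsub{\mathbf{x}}{\SW{K_{1,\alpha}}\mid\xr}\prod_{u\in\alpha}\frac{M_{u,v}-p}{p(1-p)}.
\]
Taking two replicas, computing $\mathbb{E}_\nu$ via orthogonality of the characters $\prod (M_{u,v}-p)$, averaging over both masks (this produces the factor $q^{2|\alpha|}$), and using $1+x\le e^x$ gives the exponential bound stated in the overview:
\[
1+\chi^2 \le \mathbb{E}\exp\Big(m\sum_{\alpha\neq\emptyset} q^{2|\alpha|}(p(1-p))^{-|\alpha|}\,\mathbb{E}[\SWnum{K_{1,\alpha}}{1}]\,\mathbb{E}[\SWnum{K_{1,\alpha}}{2}]\Big).
\]

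\textbf{Pruning the exponent.} Next we apply \Cref{prop:fourierexpressionthing} with $\rho\asymp\sqrt{\log n}$ on $\Seventshort$.
\begin{itemize}
\item Terms with $|\alpha|\ge\log m$: there are at most $n^k$ sets of size $k$, each contributing $q^{2k}(C\rho k/\sqrt d)^{k}$. Since $d\gg nq^4\log n$ (because $m\ge n$ and $d\gg nmq^4\log n$), the sum $m\sum_{k\ge\log m} n^k q^{2k}(C\rho k/\sqrt d)^k$ is deterministically $o(1)$, after checking that $k\le\log m$ factors are absorbed or that the crude bound still decays.
\item Remainder products $r\cdot r$: these are bounded similarly by $(C\rho k/\sqrt d)^{2\ell+2}$. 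This leaves the $\Lambda\Lambda$ and $\Lambda r$ terms for $2\le|\alpha|<\log m$, together with the single edges $|\alpha|=1$.
\end{itemize}

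\textbf{Hypercontractivity.} We separate the single-edge part from the rest by Cauchy--Schwarz, then split the rest further into a bounded number of factors indexed by $|\alpha|$. Each exponent is, up to the $\Lambda r$ cross terms (controlled by the deterministic bounds and the decay of $r$), a polynomial of degree about $2|\alpha|$ in the Gaussian entries of $\xrnum{1},\xrnum{2}$. Its variance, summed over $\alpha$ of size $k$, is about
\[
m^2 q^{4k} n^{k}\big(\text{coverage-pattern count}\big)\, d^{-2\lceil k/2\rceil}.
\]
For $k=2$ (wedges) this is $m^2q^8n^2/d^2$ once off-diagonal inner products are centered; the diagonal $|\mathbf{x}_u|^2/d-\sighat^2$ pieces give instead $m^2q^8 n/d^{2}\cdot$stuff, which is where the $p\neq\frac12$ condition $d\gg m\sqrt n q^2\log n$ enters. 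For $k=4$ it is $m^2q^{16}n^4/d^4$, which is small if $d\gg nmq^4$. To pass from small variance to $\mathbb{E}e^{X}\le 1+O(\sigma)$, we drop the conditioning (the integrand is nonnegative), use Gaussian tails from \Cref{lem:hypercont} up to the deterministic cap implied by $\Seventshort$, and the $\log n$ loss comes from $\rho$.

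\textbf{Single edges and the case $p=\frac12$.} For the single-edge factor we remove the conditioning, Taylor expand the exponential, and obtain a sum of squared expected signed weights of disjoint unions of stars. We bound these unconditionally: a star with center in $L$ and $k$ leaves on independent $\xr$ factorizes through $\mathbf{x}$, and Berry--Esseen together with \Cref{lem:sigbound} give about $(C/\sqrt d)^{k}$ behavior. Summing then yields $1+o(1)$ under $d\gg m\sqrt n q^2\log n$. When $p=\frac12$, we instead use the sign-flip symmetry $\mathbf{x}_u\mapsto-\mathbf{x}_u$, which preserves $\Seventshort$: every $\alpha$ with a leaf in $R$ then has zero conditional signed weight, and the odd-degree Wick terms vanish. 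The wedge/edge contributions disappear, so only condition $d\gg nmq^4\log n$ is needed.

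The main obstacle I expect is this last single-edge/Taylor step. There the conditioning-induced length fluctuations must be handled without losing more than the stated factor, which requires carefully counting star unions against $q^{2k}$ and $m$.
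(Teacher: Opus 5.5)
Your treatment of case (i) follows the paper's proof essentially step by step: truncation to $\Seventshort$ with $\rho\asymp\sqrt{\log n}$, Pinsker, two replicas and reduction to stars, $1+x\le e^x$, pruning, Cauchy--Schwarz, hypercontractivity, and an unconditional Taylor expansion of the single-edge factor into stars. The single-edge step you flag as the main obstacle works as you describe it, with one caveat below.

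\textbf{The genuine gap is case (ii).} In your framework the exponent contains $\Expected{\SWnum{K_{1,\alpha}}{1}}\Expected{\SWnum{K_{1,\alpha}}{2}}$ evaluated at \emph{fixed} $\xrnum{1},\xrnum{2}$. The $\Seventshort$-preserving flip $\mathbf{x}_u\mapsto-\mathbf{x}_u$, $u\in R$, only makes these odd functions of the leaf vectors; it does not make them zero. (Every vertex of $\alpha$ is a leaf of the star, so your sentence read literally would kill every term.) The symmetry kills expectations over $\xr\sim\Seventshort$, but those sit outside the exponential. You cannot move the averaging inside, because Jensen gives $\mathbb{E}[e^F]\ge\mathbb{E}[e^{\mathbf{T}F}]$, which is the wrong direction.

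What does vanish pointwise at $p=\frac12$ are the odd stars (flip the centre $\mathbf{x}$), in particular single edges. Wedges survive, with conditional weight $\arcsin(\cos\angle(\mathbf{x}_u,\mathbf{x}_v))/(2\pi)$. Your deterministic pruning of their remainder products costs $m\binom n2 q^4(C\rho/\sqrt d)^4\approx mn^2q^4\log^2(n)/d^2$. This is $o(1)$ only if $d\gg n\sqrt m\,q^2\log n$, which does not follow from $d\gg nmq^4\log n$. For $m=n$, $q=n^{-1/2}$, $d=\log^4 n$ the cost is of order $n/\log^6 n$, and the $|\alpha|=4$ remainders fail similarly. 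So along your route the claim that only $d\gg nmq^4\log n$ is needed is unsupported.

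The paper does not use the exponential for $p=\frac12$. It expands $(1+\sum_\alpha\cdots)^m$ exactly into
$1+\sum_{\emptyset\neq\alpha\subseteq K_{n,m}}\big(\tfrac{q^2}{p(1-p)}\big)^{|\alpha|}\big(\mathbb{E}_{\xl}\mathbb{E}_{\xr\sim\Seventshort}[\SW{\alpha}]\big)^2$,
a sum of squares of \emph{averaged} signed weights. There the sign-flip operator does zero out every $\alpha$ with a degree-one vertex. The leafless graphs are then bounded by applying the proposition to each left star and summing over the vertex numbers $a,b$ and degree sequences $\delta_j\ge2$. This gives geometric series in $C\log(n)q^4nm/d$. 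That counting argument is the real content of (ii) and is absent from your proposal.

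\textbf{Further points for (i).}

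First, the $\Lambda r$ cross terms cannot be controlled by deterministic bounds. Take $m=n$, $q=n^{-1/2}$, $d=n^{0.6}$, which satisfies (i); then $m\binom n2q^4(C\rho/\sqrt d)(C\rho/\sqrt d)^2\approx n^{0.1}\log^{3/2}n$. The paper instead fixes the replica carrying $r$ and treats the $r_\alpha$ as bounded coefficients. It then applies hypercontractivity in the replica carrying $\Lambda_\alpha$, using that $\mathbb{E}[\Lambda_{\alpha_1}\Lambda_{\alpha_2}]=0$ unless $|\alpha_1\triangle\alpha_2|\le 2$.

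Second, your account of where $d\gg m\sqrt n\,q^2\log n$ enters is off. The $|\alpha|=2$ leading term has no diagonal factors, since coverage forces $r_1\in\{(u,v),(v,u)\}$. The condition is actually used in three places:
\begin{itemize}
\item for the $|\alpha|\in\{2,3\}$ remainder products;
\item for odd $k$, where diagonal pairs $(u,u)$ allow $|\alpha_1\triangle\alpha_2|=2$ and cost an extra factor $n$ in the variance;
\item for the $2$-stars in the single-edge step.
\end{itemize}
In that last step you also need $\Expected{\SW{K_{1,1}}}=0$ exactly, since otherwise you would need $d\gg nmq^2$.

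Finally, in the large-$|\alpha|$ pruning use $\binom nk\le(en/k)^k$ rather than $n^k$, so that the factor $k^k$ is absorbed.
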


\begin{theorem}[Information-theoretic hardness for known masks]\label{thm:hardnessunknown}
    Consider any fixed $p \in (0,1)$. Then with probability $1- o(1)$ over the choice of $\ \mathbf{M} \sim \gaussmatarg{n,m,q}$
    $$
        \dtv{ \matnoiseknown}{ \ \gaussmat} = o(1)
    $$ under one of the following conditions 
    \begin{enumerate}
        \item[(i)] $d \gg \boundknown$ and $d \gg \boundwedgesknown$.
        \item[(ii)] $d \gg \boundknown$ and $p = 1/2$.
    \end{enumerate}
\end{theorem}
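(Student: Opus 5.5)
The plan is to run the conditional second moment method outlined in the technical overview, only now applied to a fixed mask. Write $\mu_{\mathbf{M}} = \wishartmasked$ and $\nu = \gaussmat$. Fix $\rho = C\sqrt{\log n}$ with $C$ large, so that by a Chernoff bound and a union bound over the $n^2$ pairs in $R$ we have $\Prnop{\overline{\Seventshort}} = o(1)$. The triangle inequality then gives
\[
\dtvnop{\mu_{\mathbf{M}}}{\nu} \le o(1) + \dtvnop{\L(\mu_{\mathbf{M}}\mid \Seventshort)}{\nu}.
\]
By Markov's inequality it therefore suffices to show
\[
\Expectedsubnop{\mathbf{M}}{\dchisq{\L(\mu_{\mathbf{M}}\mid\Seventshort)}{\nu}} = o(1),
\]
after which Pinsker gives the claim with probability $1-o(1)$ over $\mathbf{M}$.

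\textbf{Step 1: the chi-square divergence as an exponential of star weights.} For fixed $\mathbf{M}$, write $\L(\mu_{\mathbf{M}}\mid\Seventshort)$ as a mixture over $(\xr,\xl)$ and expand the square with two replicas $\xrnum{1},\xrnum{2}$ drawn from $\Seventshort$. The left vectors $\mathbf{x}_v$, $v\in L$, stay independent and $\nu$ is a product measure. For each column $v$, the inner $\nu$-expectation factorizes into
\[
\sum_{\alpha\subseteq N_{\mathbf{M}}(v)} \frac{\Expectednop{\SWnum{K_{1,\alpha}}{1}}\,\Expectednop{\SWnum{K_{1,\alpha}}{2}}}{(p(1-p))^{|\alpha|}},
\]
where $N_{\mathbf{M}}(v)\subseteq R$ is the set of unmasked entries in column $v$. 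Bound $1+x\le e^x$ and take the product over the $m$ columns. Then take $\Expectedsub{\mathbf{M}}{\cdot}$ inside via Jensen, or more precisely via the independence of the columns of $\mathbf{M}$ together with $\Expectednop{\prod_{u\in\alpha}\mathbf{M}_{u,v}}=q^{|\alpha|}$. This yields
\[
1+\Expectedsubnop{\mathbf{M}}{\chi^2}\le \Expectedsubb{\latentstuff}{\exp\Big(m\sum_{\emptyset\ne\alpha\subseteq R} q^{|\alpha|}\frac{\Expectednop{\SWnum{K_{1,\alpha}}{1}}\Expectednop{\SWnum{K_{1,\alpha}}{2}}}{(p(1-p))^{|\alpha|}}\Big)}.
\]
This is exactly the unknown-mask bound with $q^2$ replaced by $q$, because only one copy of the mask appears.

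\textbf{Step 2: simplify the exponent.} Apply \Cref{prop:fourierexpressionthing} to each star weight. Stars with $|\alpha|\ge\log m$ are dropped, since their contribution is at most $m\sum_k \binom{n}{k} q^k (C\rho k/\sqrt d)^{k}$, which is negligible once $d\gg nq\log(n)^3$, and this follows from $d \gg \boundknown$. Among the remaining stars, expand the product as $\leadingnum{1}\leadingnum{2}$, the two cross terms, and $\restnum{1}\restnum{2}$. The $\restnum{1}\restnum{2}$ terms for $|\alpha|\ge 2$ are summable using the bound $(C\rho|\alpha|/\sqrt d)^{\ell+1}$.

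Then use Cauchy--Schwarz to split the exponential into a product of exponentials, one for each size class $|\alpha|=k$ and each term type. Each such exponent is a polynomial of bounded degree in the Gaussian vectors $\xrnum{1},\xrnum{2}$, given explicitly by $\leading$. Compute its variance under unconditioned Gaussian inputs. For example, the $|\alpha|=2$ term, which is the signed wedge, has variance of order $(m n q^2/d)^2\cdot\tau^2$; this is small because $d\gg \boundwedgesknown$. The $4$-cycle-type terms have variance of order $mn^2q^2/d^2\cdot$, which is small because $d\gg\boundknown$. For $p=\tfrac12$ one has $\tau=0$, so $\phi^{(1)}(\tau)=0$ and the wedge term vanishes identically.

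With the variances in hand, remove the conditioning on $\Seventshort$ using non-negativity of the integrand, keeping the deterministic cap on the exponent that $\Seventshort$ provides. Then apply \Cref{lem:hypercont} to get tail bounds, which show that each factor is $1+O(\sigma)$.

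\textbf{Step 3 (main obstacle): single edges.} The term $|\alpha|=1$ is the hard part, because there $\restnum{1}\restnum{2}$ is not negligible: the fluctuations of $\|\mathbf{x}_u\|^2/d$ contribute about $mnq/d$ before any cancellation. My approach here is to drop the conditioning on $\Seventshort$ for this factor, again by positivity, and Taylor expand the exponential. This produces an exponential series in unconditional expected signed weights of disjoint unions of stars with one edge per leaf copy. These are computed directly: $\Expectednop{\SW{K_{1,1}}\mid\mathbf{x}_u}$ depends only on $\|\mathbf{x}_u\|$ and has order $(\|\mathbf{x}_u\|^2/d-\sighat^2)\phi(\tau)$. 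Consequently the sum reduces to a moment generating function of $\chi^2$-type variables with parameter of order $mq/d$ per vertex. Its logarithm is $O(n(mq/d)^2)$, which is $o(1)$ because $d\gg\sqrt{n}\,mq$; this is implied by the wedge condition when $p\ne\tfrac12$, and is not needed when $p=\tfrac12$, where $\tau=0$ kills the term. Combining the factors gives $\Expectedsubnop{\mathbf{M}}{\chi^2}=o(1)$.
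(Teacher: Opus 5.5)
Your argument for case (i) is the same as the paper's. You average over a single copy of the mask to get $q^{|\alpha|}$ in place of $q^{2|\alpha|}$, then truncate, apply Cauchy--Schwarz and hypercontractivity, and handle single edges by Taylor expansion. Two small repairs are needed:
\begin{itemize}
\item In Step 1 you must average over $\mathbf{M}$ column by column \emph{before} using $1+x\le e^x$. If you exponentiate first, Jensen goes the wrong way.
\item Dropping $|\alpha|\ge\log m$ needs $d\gg n^2q^2\log n$, not $d\gg nq\log^3 n$. This holds because $m\ge n$.
\end{itemize}

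The genuine gap is case (ii). You claim that at $p=\frac12$ the wedge condition is used only for the single-edge factor and a ``wedge'' variance. But in Step 2 it is also what lets you discard the $\restnum{1}\restnum{2}$ terms.
\begin{itemize}
\item For $|\alpha|=2$ one has $\ell=1$, so the Proposition only gives $|\rest|\le(C\rho/\sqrt d)^2$. On $\Seventshort$ these products can contribute up to order $m\binom{n}{2}q^2\rho^4/d^2\asymp mn^2q^2\log^2 n/d^2$ to the exponent.
\item This is $o(1)$ only if $d\gg\sqrt{m}\,nq\log n$. That follows from $d\gg\boundwedgesknown$ (since $m\ge n$), but not from $d\gg\boundknown$ when $q\ll m^{-1/2}$. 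For example, $n=m$, $q=n^{-3/4}$, $d=n^{0.6}$ satisfies (ii) and gives $n^{0.3}\log^2 n$.
\item The bound is not lossy. At $\tau=0$ the exact conditional weight is $\frac{1}{2\pi}\arcsin\big(\langle\mathbf{x}_u,\mathbf{x}_v\rangle/(\|\mathbf{x}_u\|\|\mathbf{x}_v\|)\big)$. This depends on the norms $\|\mathbf{x}_u\|^2/d$, which fluctuate by $\rho/\sqrt d$ on $\Seventshort$, while the leading term is linear in $\langle\mathbf{x}_u,\mathbf{x}_v\rangle/d$. So $\rest$ really reaches order $\rho^2/d$.
\item Also, $\tau=0$ does not kill the $|\alpha|=2$ leading term, which carries the factor $\phi_{\sighat}(0)^2\neq0$. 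What vanishes at $p=\frac12$ are the odd stars, single edges included, by the symmetry $\mathbf{x}\mapsto-\mathbf{x}$ of the center.
\end{itemize}

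The paper avoids these remainders at $p=\frac12$ by dropping the exponential altogether. It expands $1+\mathbb{E}_{\mathbf{M}}\chi^2$ into the full sum $\sum_{\alpha\subseteq K_{n,m}}(q/(p(1-p)))^{|\alpha|}\big(\mathbb{E}_{\xl}\mathbb{E}_{\xr\sim\Seventshort}\SW{\alpha}\big)^2$. Since $\Seventshort$ only constrains $|\langle\mathbf{x}_u,\mathbf{x}_v\rangle|$, it is invariant under independent sign flips $\mathbf{x}_u\mapsto\xi_u\mathbf{x}_u$. With $\tau=0$ this forces the conditional weight of every $\alpha$ containing a degree-one vertex to be exactly zero. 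Summing the Proposition's bounds over leafless bipartite graphs then converges under $d\gg\boundknown$ alone. To keep your exponential route, you would have to use this sign symmetry on the remainder terms inside the exponential. That is exactly the extra technical work the paper mentions and chooses to avoid.
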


The proof of both theorems uses a conditional second moment method. Let us for now focus on outlining the techniques used for $\matnoiseedge$, we will later see how to adapt them to $\matnoiseknown$. To this end, we denote by $\mu$ the distribution of $\matnoiseedge$, and we denote by $\nu$ the distribution of $\gaussmat$. To bound the total variation distance between the two models, we use the triangle inequality to bound
$$
    \dtv{\mu}{\nu} \le \Prnop{\overline{\Seventshort}} + \dtv{\mathcal{L}(\mu \mid \Seventshort)}{\nu},
$$
where $\mathcal{L}(\mu \mid \Seventshort)$ denotes the distribution $\mu$ conditional on the latent information being in some 'good' event $\Seventshort \in \sigma( \mathbf{X}_{\text{R}})$ to be specified later.
Since $\Seventshort$ will be chosen such that $\Prnop{\overline{\Seventshort}} = o(1)$, it suffices to consider the second term. To this end, we pass from total variation to $\chi^2$-divergence, i.e., 
$$
    2\dtvnop{\mathcal{L}(\mu \mid \Seventshort)}{\nu}^2 \le \dchisq{\mathcal{L}(\mu \mid \Seventshort)}{\nu} = \Expectedsub{\xi \sim \nu}{ \left( \frac{\mathrm{d} \mathcal{L}(\mu \mid \Seventshort)}{\mathrm{d} \nu}(\xi) \right)^2 } - 1.
$$ Now, using that $\mathcal{L}(\mu \mid \Seventshort)$ is a mixture distribution in terms of some latent information $\phi$, we can express $\mathcal{L}(\mu \mid \Seventshort) = \Expectedsub{\phi}{ \mu_\phi}$ for some distribution $\mu_\phi$ and then replace the squared expectation by an expectation over two independent copies of the latent randomness $\phi \coloneqq (\xr, \mathbf{M})$ where $\xr \in \mathbb{R}^{n \times d}$ is the matrix of latent vectors in $R$, and $\mathbf{M} \in \{0, 1\}^{n \times m}$ is the mask used to re-randomize the entries in $W$. Concretely, we get $$
    \dchisq{\mathcal{L}(\mu \mid S)}{\nu} = \Expectedsubb{\xi \sim \nu}{ \left( \Expectedsubb{\phi \sim \varrho}{\frac{\mathrm{d} \mu_{\phi}}{\mathrm{d} \nu}(\xi)} \right)^2 } - 1 =  \Expectedsubb{\phi^{(1)}, \phi^{(2)} \sim \varrho \otimes \varrho}{ \Expectedsubb{\xi \sim \nu}{ \frac{\mathrm{d} \mu_{\phi^{(1)}}}{\mathrm{d} \nu}(\xi) \frac{\mathrm{d} \mu_{\phi^{(2)}}}{\mathrm{d} \nu}(\xi) }  } - 1
$$
where $\varrho$ denotes the law of $\phi$ conditional on $\Seventshort$. 
Since $\nu$ is a sufficiently simple distribution, the inner expectation evaluates explicitly and allows us to simplify the above expression. Concretely, denoting the columns of $\xr, \xl$ by $(\mathbf{x}_u)_{u \in R}$ and  $(\mathbf{x}_u)_{u \in L}$, respectively, we use a similar calculation as in \cite{liuracz} and express the probability of drawing a concrete matrix $\xi \in \mathbb{R}^{n \times m}$ under the conditional distribution $\mu_{\phi}$ as 
\newcommand{\Expectedsubbsmall}[2]{\underset{#1}{\mathbb{E}} \Bigg[ #2 \Bigg]}
\newcommand{\sigmanum}[1]{\sigma^{(#1)}_{u,v}}
\newcommand{\sigmanumuv}[3]{\sigma^{(#1)}_{#2,#3}}
\begin{align*}
    \frac{\d \mu_{\phi}}{\d \nu}(\xi) &= \underset{(\mathbf{x}_u)_{u \in L}}{\mathbb{E}} \left[ \ { \prod_{ \substack{u \in L}} \prod_{ \substack{v \in R}} \frac{\sigma_{\phi}(\langle \mathbf{x}_u, \mathbf{x}_v \rangle)^{\xi_{u,v}} \left(1 -\sigma_{\phi}(\langle \mathbf{x}_u, \mathbf{x}_v \rangle)\right)^{1 - \xi_{u,v}} }{p^{\xi_{u,v}}(1-p)^{1-\xi_{u,v}}} }  \right]
\text{ with }
    \sigma_{\phi}(\langle \mathbf{x}_u, \mathbf{x}_v \rangle) \coloneqq \begin{cases}
        p & \text{if } \mathbf{M}_{u,v} = 0\\
        \sigma(\langle \mathbf{x}_u, \mathbf{x}_v \rangle) & \text{otherwise}.
    \end{cases} 
\end{align*} 
To plug this back into our expression for $\dchisq{\mathcal{L}(\mu \mid \Seventshort)}{\nu}$, we express our latent randomness as $\phi^{(1)} \coloneqq (\xrnum{1}, \masknum{1})$ and $\phi^{(2)} \coloneqq (\xrnum{2}, \masknum{2})$, and---in order to simplify notation---we abbreviate $\sigmanum{1} = \sigma_{\phi^{(1)}}(\langle \mathbf{x}_u^{(1)}, \mathbf{x}_v^{(1)} \rangle)$ and $\sigmanum{2} = \\ \sigma_{\phi^{(2)}}(\langle \mathbf{x}_u^{(2)}, \mathbf{x}_v^{(2)} \rangle)$. Then, the inner expectation evaluates as \begin{align*} 
    &\Expectedsubb{\xi \sim \nu}{ \frac{\mathrm{d} \mu_{\phi^{(1)}}}{\mathrm{d} \nu}(\xi) \frac{\mathrm{d} \mu_{\phi^{(2)}}}{\mathrm{d} \nu}(\xi) } = \Expectedsubb{(\mathbf{x}_{u}^{(1)}, \mathbf{x}_{u}^{(2)} )_{u \in L}}{ \prod_{u \in L}\prod_{ v \in R} \ \Expectedsubb{ \xi_{u,v} \sim \Bern{p}}{ \left( \frac{1}{p^2} \sigmanum{1} \sigmanum{2} \right)^{\xi_{u,v}} \left(\frac{1}{(1-p)^2}  (1 - \sigmanum{1})(1 - \sigmanum{2} ) \right)^{1 - \xi_{u,v}} } }\\
    & \hspace{1.5cm} = \Expectedsubb{\mathbf{x}_{1}^{(1)}, \mathbf{x}_{1}^{(2)}}{ \prod_{v \in R}  \frac{1}{p} \sigmanumuv{1}{1}{v} \sigmanumuv{2}{1}{v} +  \frac{1}{1-p}  (1 - \sigmanumuv{1}{1}{v})(1 - \sigmanumuv{2}{1}{v} )  }^{m}\\
    & \hspace{1.5cm} = \Expectedsubb{\mathbf{x}_{1}^{(1)}, \mathbf{x}_{1}^{(2)} }{ \prod_{v \in R} \left( 1 + \frac{1}{p(1-p)} (\sigmanumuv{1}{1}{v} - p) (\sigmanumuv{2}{1}{v} - p) \right) }^{m}.
\end{align*}
where we used that all the $(\mathbf{x}_{u}^{(1)}, \mathbf{x}_{u}^{(2)} )_{u \in L}$ are i.i.d. such that the product over $u \in L$ can be pulled out of the expectation and results in raising the entire expression to the power of $m = |L|$.
Expanding the above product, we get 
\begin{align*}
    \Expectedsubb{\xi \sim \nu}{ \frac{\mathrm{d} \mu_{\phi^{(1)}}}{\mathrm{d} \nu}(\xi) \frac{\mathrm{d} \mu_{\phi^{(2)}}}{\mathrm{d} \nu}(\xi) }=  \left( 1 + \sum_{\emptyset \neq \alpha \subseteq R} \frac{1}{(p(1-p))^{|\alpha|}} \Expectedsub{\mathbf{x}_1^{(1)}}{ \prod_{v \in \alpha} (\sigmanumuv{1}{1}{v} - p)}\Expectedsub{\mathbf{x}_1^{(2)}}{ \prod_{v \in \alpha} (\sigmanumuv{2}{1}{v} - p)} \right)^{m}
\end{align*}
Now, we can take the expectation over $\phi^{(1)}, \phi^{(2)}$ and due to the independence of the entries in $\masknum{1}, \masknum{2}$, we can take the expectation over $\masknum{1}, \masknum{2}$ into the products above in order to obtain
\begin{align}\label{eq:signedweighthardness}
    &\Expectedsubb{\phi^{(1)}, \phi^{(2)} \sim \varrho \otimes \varrho}{ \Expectedsubb{\xi \sim \nu}{ \frac{\mathrm{d} \mu_{\phi^{(1)}}}{\mathrm{d} \nu}(\xi) \frac{\mathrm{d} \mu_{\phi^{(2)}}}{\mathrm{d} \nu}(\xi) }  } \nonumber \\ 
    &\hspace{2cm}= \Expectedsubb{\latentstuff}{ \left( 1 + \sum_{\emptyset \neq \alpha \subseteq R} \frac{1}{(p(1-p))^{|\alpha|}} \Expectedsub{\mathbf{x}_1^{(1)}}{ \prod_{v \in \alpha} \Expectedsubnop{\masknum{1}_{1,v}}{(\sigmanumuv{1}{1}{v} - p)} } \Expectedsub{\mathbf{x}_1^{(2)}}{ \prod_{v \in \alpha} \Expectedsubnop{ \masknum{2}_{1,v}}{(\sigmanumuv{2}{1}{v} - p)}  } \right)^{m} } \nonumber \\
    &\hspace{2cm}=\Expectedsubb{\latentstuff}{ \left( 1 + \sum_{\emptyset \neq \alpha \subseteq R} q^{2|\alpha| } \ \frac{\Expected{ \SWnum{K_{1,\alpha}}{1} } \Expected{ \SWnum{K_{1,\alpha}}{2} }}{(p(1-p))^{|\alpha|}}\right)^{m}  }
\end{align}
where for $k \in \{1,2\}$ 
$$
    \SWnum{K_{1,\alpha}}{k} \coloneqq \prod_{j \in \alpha} (\sigma(\langle \mathbf{x}_{j}^{(k)}, \mathbf{x} \rangle) - p), \text{ and } \sigma(\langle \mathbf{x}, \mathbf{y} \rangle) \coloneqq \mathds{1}(\langle \mathbf{x}, \mathbf{y}\rangle \le \tau)
$$ and where we used $$\Expectedsubnop{\masknum{1}_{1,v}}{(\sigmanumuv{1}{1}{v} - p)} = q (\sigma(\langle \mathbf{x}_1^{(1)}, \mathbf{x}_v^{(1)} \rangle) - p), \text{ and } \Expectedsubnop{\masknum{2}_{1,v}}{(\sigmanumuv{2}{1}{v} - p)} = q ( \sigma(\langle \mathbf{x}_1^{(2)}, \mathbf{x}_v^{(2)} \rangle) - p). $$ Note that in the last line of \eqref{eq:signedweighthardness}, the inner expectations are over a single $\mathbf{x} \sim \gauss{\mathbf{I}_d}$ and to be understood as conditional on $\xrnum{1}, \xrnum{2}$\footnote{we omitted an explicit conditioning here and in the following in order to simplify notation.}. Accordingly, $\Expected{ \SWnum{K_{1,\alpha}}{1} }$ is the expected signed weight of a star with set of leaves $\alpha$, conditional on the latent vectors of the leaves, taken over the randomness of the latent vector corresponding to the center. 

\subsection{The case of arbitrary $p$}
To show the bound for arbitrary $p$, from \Cref{eq:signedweighthardness} we get the following.
\begin{align}\label{eq:expp} 
    &\Expectedsubb{\phi^{(1)}, \phi^{(2)} \sim \varrho \otimes \varrho}{ \Expectedsubb{\xi \sim \nu}{ \frac{\mathrm{d} \mu_{\phi^{(1)}}}{\mathrm{d} \nu}(\xi) \frac{\mathrm{d} \mu_{\phi^{(2)}}}{\mathrm{d} \nu}(\xi) }  } = \Expectedsubb{\latentstuff}{ \left( 1 + \sum_{\emptyset \neq \alpha \subseteq R} q^{2|\alpha| } \frac{\Expected{ \SWnum{K_{1,\alpha}}{1} } \Expected{ \SWnum{K_{1,\alpha}}{2} }}{(p(1-p))^{|\alpha|}}\right)^{m} } \nonumber \\
    &\hspace{4cm}\le \Expectedsubb{\latentstuff}{ \exp\left( m \sum_{\emptyset \neq \alpha \subseteq R} q^{2|\alpha| } \frac{\Expected{ \SWnum{K_{1,\alpha}}{1} } \Expected{ \SWnum{K_{1,\alpha}}{2} }}{(p(1-p))^{|\alpha|}} \right) }
\end{align} where the expectations are taken over $\mathbf{x} \sim \mathcal{N}(0, \mathbf{I}_d)$.

\subsubsection{Identifying the good event $S$ and deriving an expression for $\Expected{ \SW{K_{1,\alpha}} }$}

To proceed, we rely the Fourier-theoretic expression of $\Expected{ \SWnum{K_{1,\alpha}}{1} }, \Expected{ \SWnum{K_{1,\alpha}}{2} }$ from \Cref{prop:fourierexpressionthing}.
To this end, recall the definition of $\Sevent$,
$$
     \Sevent \coloneqq \Seventdef, 
$$ where $\mathbf{I}$ is the $n \times n$ identity matrix. We proceed by setting $\rho \coloneqq C\sqrt{\log(n)}$ and conclude that for sufficiently large $C > 0$, the event $\Seventshort$ occurs with high probability.
\begin{observation}\label{lem:sislikely}
    For sufficiently large $C > 0$ and $\rho \coloneqq C\sqrt{\log(n)}$, we have $\Prnop{\Seventshort} = 1-o(1)$.
\end{observation}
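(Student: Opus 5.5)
The plan is a union bound over all pairs $u,v \in R$ combined with a concentration inequality for each normalized inner product $\tfrac1d\langle \mathbf{x}_u,\mathbf{x}_v\rangle$. There are at most $n^2$ pairs, so it suffices to show that for each pair the probability of $\big|\tfrac1d\langle \mathbf{x}_u,\mathbf{x}_v\rangle - \mathbf{I}_{u,v}\big| > \rho/\sqrt d$ is at most $n^{-3}$, provided $C$ is large enough.

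\textbf{Diagonal terms} ($u=v$). Here $\langle \mathbf{x}_u,\mathbf{x}_u\rangle=\sum_{i=1}^d x_{u,i}^2$ is a $\chi^2_d$ variable. The Laurent--Massart (or Bernstein) bound for sub-exponential variables gives
$$\Prnop{|\chi^2_d - d| > t\sqrt d} \le 2\exp\big(-c\min(t^2, t\sqrt d)\big).$$
With $t=\rho=C\sqrt{\log n}$ and the standing assumption $d\gg \log(n)^3$, we have $t\sqrt d \ge t^2$, so this is at most $2\exp(-cC^2\log n)$.

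\textbf{Off-diagonal terms} ($u\neq v$). Here $\langle\mathbf{x}_u,\mathbf{x}_v\rangle=\sum_i x_{u,i}x_{v,i}$ is a sum of $d$ i.i.d. centered sub-exponential variables (products of independent standard Gaussians). Bernstein's inequality again gives
$$\Prnop{|\langle \mathbf{x}_u,\mathbf{x}_v\rangle| > \rho\sqrt d}\le 2\exp\big(-c\min(\rho^2,\rho\sqrt d)\big) = 2\exp(-cC^2\log n),$$
using $\rho\le\sqrt d$, which holds because $d\gg\log n$. An alternative is to condition on $\mathbf{x}_v$: then the inner product is $\mathcal{N}(0,\|\mathbf{x}_v\|^2)$, and on the diagonal event just established $\|\mathbf{x}_v\|^2\le 2d$, so a Gaussian tail bound suffices.

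\textbf{Union bound.} Summing over at most $n^2$ pairs gives
$$\Prnop{\overline{\Seventshort}}\le 2n^2 \exp(-cC^2\log n) = 2n^{2-cC^2} = o(1)$$
once $cC^2>2$.

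There is no real obstacle. The only point needing care is checking that $\rho/\sqrt d \to 0$, so that the sub-Gaussian part of Bernstein's inequality is the relevant one; this is guaranteed by $d \gg \log(n)^3$.
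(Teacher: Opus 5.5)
Your proposal is correct and is essentially the paper's argument. The paper gives no separate proof of this observation, but the proof of \Cref{cor:sw} uses the same route: a Chernoff/Bernstein bound for each of the normalized inner products, followed by a union bound over the at most $n^2$ pairs. Your explicit check that $\rho \le \sqrt{d}$, so that the sub-Gaussian branch of Bernstein applies, is a step the paper glosses over by writing $\exp(-c\rho^2)$ directly. That check is genuinely needed, and the standing assumption $d \gg \log(n)^3$ supplies it.
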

\noindent Under occurrence of $\Sevent$, we derive recall the following bound on the conditional expectations $\Expected{ \SWnum{K_{1,\alpha}}{1}}$ and $\Expected{ \SWnum{K_{1,\alpha}}{2} }$, i.e., \Cref{prop:fourierexpressionthing}, which we formally prove in \Cref{sec:proofofprop}. 

\fourierstuff*

\newcommand{\term}{\Lambda_\alpha(\mathbf{X}_R^{(1)}, \mathbf{X}_R^{(2)})}
\newcommand{\defterm}{\leadingnum{1} \leadingnum{2} + \leadingnum{1}\restnum{2} + \leadingnum{2}\restnum{1}}
\noindent For our purposes, it is further convenient to define another quantity $\term$ such that for any $\alpha \subseteq R$, we have
\begin{align*}
    \Expected{\SWnum{K_{1,\alpha}}{1} } \Expected{\SWnum{K_{1,\alpha}}{2}} &= \term + \restnum{1} \restnum{2}, \\
    \text{that is } \term &\coloneqq \defterm.
\end{align*}

\subsubsection{Simplifying the sum over $\alpha$}

\noindent Using the bound from \Cref{prop:fourierexpressionthing} combined with the assumptions $d \gg \bound$ and $d \gg \boundwedges$, we can already simplify the sum in \eqref{eq:expp} 
To this end, we first note that we can omit terms corresponding to $|\alpha| \ge \log(m)$.
\begin{lemma}[Ignoring terms with large $|\alpha|$]
    If $d \gg \bound$, then 
    $$
        \sum_{\substack{\emptyset \neq \alpha \subseteq R \\ |\alpha| \ge \log(m)}}  q^{2|\alpha|}\frac{ \Expected{ \SWnum{K_{1,\alpha}}{1} } \Expected{ \SWnum{K_{1,\alpha}}{2} }}{(p(1-p))^{|\alpha|}} = o\left( \frac{1}{m} \right).
    $$
\end{lemma}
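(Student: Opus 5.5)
The plan is to bound each summand in absolute value using \Cref{prop:fourierexpressionthing} and then sum over all $\alpha \subseteq R$ grouped by size $k = |\alpha|$. We work throughout on the event $\xrnum{1},\xrnum{2} \in \Seventshort$ with $\rho = C\sqrt{\log(n)}$. By the proposition, for $k = |\alpha|$ and $\ell = \lceil k/2 \rceil$, each factor satisfies
$$
|\Expected{\SWnum{K_{1,\alpha}}{j}}| \le |\Lambda_\alpha(\xrnum{j})| + |r_\alpha(\xrnum{j})| \le 2\Big(\frac{C\rho k}{\sqrt d}\Big)^{\ell},
$$
provided $C\rho k/\sqrt d \le 1$. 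Consequently the product of the two conditional signed weights is at most $4(C\rho k/\sqrt d)^{2\ell} \le 4(C\rho k/\sqrt d)^{k}$. This holds deterministically on $\Seventshort$, which is all the lemma needs, since the sum sits inside the conditional expectation.

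Next, we count the sets of size $k$. There are $\binom{n}{k} \le (en/k)^k$ of them, so the tail is bounded by
$$
\sum_{k \ge \log(m)} 4\left(\frac{e n q^2 C \rho k}{k\, p(1-p)\sqrt d}\right)^{k} = \sum_{k\ge \log m} 4\left(\frac{C' n q^2 \sqrt{\log n}}{\sqrt d}\right)^{k}.
$$
The factor of $k$ from the proposition cancels against the $1/k$ from Stirling, which is the key point. Now $d \gg \bound = nmq^4\log(n)$ and $m \ge n$ give
$$
\frac{n^2q^4\log n}{d} \le \frac{nmq^4\log n}{d} = o(1),
$$
so the ratio $x := C' n q^2\sqrt{\log n}/\sqrt d$ tends to $0$. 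The geometric tail is then $O(x^{\log m}) = O(m^{\log x})$. Since $\log x \to -\infty$, this is $o(1/m)$, and is in fact smaller than any inverse power of $m$.

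It remains to check the side condition $C\rho k/\sqrt d \le 1$. This is the main technical obstacle, because for $k$ close to $n$ it may fail, since $d$ need not exceed $n\sqrt{\log n}$. To handle large $k$, I would instead use the trivial bound $|\SW{\cdot}| \le 1$ when $k \ge \sqrt d/(C\rho)$. These terms contribute at most
$$
\sum_{k\ge \sqrt d/(C\rho)} \Big(\frac{enq^2}{k\,p(1-p)}\Big)^k \le \sum_{k} \Big(\frac{C'' n q^2\rho}{\sqrt d}\Big)^k,
$$
which is the same small geometric ratio $x$. These terms are therefore also $o(1/m)$, because $k \ge \sqrt d/(C\rho) \ge \log m$ under $d \gg \log(n)^3$. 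Alternatively, one can observe that the proposition's bounds are only informative in the range $k \le \sqrt d/(C\rho)$, and split the sum there. With both ranges covered, the lemma follows.
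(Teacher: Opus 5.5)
Your proposal is correct and follows essentially the same route as the paper: bound each product of conditional signed weights via \Cref{prop:fourierexpressionthing} by $(Ck\rho/\sqrt{d})^{k}$ (using $2\ell \ge k$), absorb the factor $k$ through $\binom{n}{k}\le (en/k)^k$, and sum the geometric tail with ratio $O(nq^2\sqrt{\log n}/\sqrt{d})=o(1)$ from $k\ge\log(m)$. Your separate treatment of $k>\sqrt{d}/(C\rho)$ is extra rigor the paper leaves implicit, since there the trivial bound $|\mathbb{E}[\cdot]|\le 1$ already makes the displayed bound valid; the remark $\sqrt{d}/(C\rho)\ge\log(m)$ is unnecessary, because the summation range already forces $k\ge\log(m)$.
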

\begin{proof}
    Using the bound from \Cref{prop:fourierexpressionthing}, we get 
    \begin{align*}
        \sum_{\substack{\emptyset \neq \alpha \subseteq R \\ |\alpha| \ge \log(m)}}  q^{2|\alpha|}\frac{ \Expected{ \SWnum{K_{1,\alpha}}{1} } \Expected{ \SWnum{K_{1,\alpha}}{2} }}{(p(1-p))^{|\alpha|}} \le \sum_{k \ge \log(m) } \binom{n}{k} \ \frac{q^{2k}}{(p(1-p))^k}  \left( \frac{C k \rho }{\sqrt{d}} \right)^{k} \le \sum_{k \ge \log(m) } \left( \frac{2Ce \rho }{p(1-p)} \frac{n q^2}{\sqrt{d}} \right)^{k}.
    \end{align*} Due to the assumption $d \gg \bound$ and $m \ge n$, we get that the base of the exponential in the first sum is $o(1)$. Since the exponent $k$ is at least $\log(m)$, we get that the total expression is $o(1/m)$, as desired. 
\end{proof}
\noindent Moreover, we can ignore all the higher-order terms obtained after applying \Cref{prop:fourierexpressionthing}, which we do in the following. 
\begin{lemma}[Ignoring higher order terms] If $d \gg \bound$ and $d \gg \boundwedges$ then
    $$
    \sum_{ \substack{\alpha \subseteq R \\ |\alpha| \ge 2}} q^{2|\alpha|} \frac{\Expected{\SWnum{K_{1,\alpha}}{1} } \Expected{\SWnum{K_{1,\alpha}}{2} }}{(p(1-p))^{|\alpha|}} \le  \sum_{ \substack{\alpha \subseteq R \\ |\alpha| \ge 2}} q^{2|\alpha|}  \frac{\term}{(p(1-p))^{|\alpha|}} + o\left( \frac{1}{m} \right)
    $$
\end{lemma}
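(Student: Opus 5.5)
Since $\Expected{\SWnum{K_{1,\alpha}}{1}}\Expected{\SWnum{K_{1,\alpha}}{2}} = \term + \restnum{1}\restnum{2}$ by definition of $\term$, the claim is equivalent to
\begin{align*}
    \sum_{\substack{\alpha \subseteq R \\ |\alpha| \ge 2}} q^{2|\alpha|}\frac{\restnum{1}\restnum{2}}{(p(1-p))^{|\alpha|}} \le o\left(\frac{1}{m}\right),
\end{align*}
uniformly over $\xrnum{1},\xrnum{2} \in \Seventshort$. It therefore suffices to bound the absolute values of the remainder products using the deterministic bound from \Cref{prop:fourierexpressionthing}, which holds since we are on $\Seventshort$ with $\rho = C\sqrt{\log(n)}$.

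For $|\alpha| = k$ and $\ell = \lceil k/2\rceil$, \Cref{prop:fourierexpressionthing} gives $|\restnum{1}\restnum{2}| \le (C\rho k/\sqrt d)^{2\ell+2}$. Moreover, $2\ell + 2 \ge k+2$, and $C\rho k/\sqrt d \le 1$ in the relevant range. We split the sum at $k = \log(m)$. For $k \ge \log(m)$ we argue exactly as in the previous lemma, with $\binom{n}{k}\le (en/k)^k$ and $(C\rho k/\sqrt d)^{k}$ giving base $O(\rho n q^2/\sqrt d) = o(1)$; the $k$ then cancels. For $2\le k<\log(m)$ we use the stronger exponent $k+2$, so the $k$-th term is at most
\begin{align*}
    \left(\frac{e n q^2}{k\,p(1-p)}\right)^{k} \left(\frac{C\rho k}{\sqrt d}\right)^{k+2} = \left(\frac{C' \rho n q^2}{\sqrt d}\right)^{k}\left(\frac{C\rho k}{\sqrt d}\right)^{2}.
\end{align*}

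The main obstacle is to make this $o(1/m)$ using only the given hypotheses. For $k=2$ the term is of order $\rho^4 n^2 q^4/d^2$. Since $d \gg nmq^4\log n$ and $d \gg m\sqrt n q^2\log n$, we get $(nq^2/d)^2 \le \frac{nq^4}{d}\cdot\frac{n}{d}$, where $nq^4/d \ll 1/(m\log n)$. However, $n/d$ is not obviously small. Using instead $d^2 \gg m^2 n q^4 \log^2 n$ gives $\rho^4 n^2q^4/d^2 \ll n\log^2 n/(m^2\log^2 n) \le 1/m$ as $m\ge n$; the $\rho^4=C^4\log^2 n$ is what requires the $\log$ in the wedge assumption. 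I would check carefully that the log powers match, and if not, absorb them into the $\gg$ (which hides arbitrarily slow growth). For general $k\ge 3$ the geometric factor $(C'\rho nq^2/\sqrt d)^{k-2}=o(1)^{k-2}$ multiplies the $k=2$ estimate, and the polylogarithmic $k^2$ factor is absorbed by this decay. Summing over $k<\log m$ then yields $o(1/m)$. Together with the large-$k$ range, this proves the lemma.
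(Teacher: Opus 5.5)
\textbf{Same route as the paper, but one step fails as written.} Your core computation is the paper's. The difference of the two sides is the weighted sum of the remainder products, which you bound by $(C\rho k/\sqrt d)^{2\ell+2}$ via the proposition on conditional signed weights. The estimate $\binom{n}{k}\le (en/k)^k$ then leaves a geometric factor with base $O(\rho n q^2/\sqrt d)=o(1)$ times a polynomial prefactor in $k$. The dominant $k=2$ term $O(\rho^4n^2q^4/d^2)$ is $o(1/m)$ by squaring $d\gg m\sqrt{n}q^2\log n$; the paper uses the equivalent consequence $d\gg n\sqrt{m}q^2\log n$. The log powers match exactly, so nothing needs absorbing. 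Note also that surplus logs could never be absorbed into $\gg$, precisely because it allows arbitrarily slow growth.

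\textbf{The failing step.} You claim $C\rho k/\sqrt d\le 1$ ``in the relevant range'' and use it in the branch $k\ge\log m$ to replace the exponent $2\ell+2$ by $k$. The hypotheses do not give this for $k$ up to $n$. For example, take $m=n$, $q=n^{-2}$ and $d=\log(n)^4$. Both assumptions and $d\gg\log(n)^3$ hold, yet $C\rho k/\sqrt d\to\infty$ for $k$ near $n$. Then $(C\rho k/\sqrt d)^{2\ell+2}$ is much larger than $(C\rho k/\sqrt d)^{k}$, not smaller.

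You also cannot import the previous lemma's argument here. There, the full product of conditional expectations is trivially at most $1$, which covers the case $C\rho k/\sqrt d>1$. The remainder products have no such trivial bound.

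\textbf{The repair.} Drop the split at $\log m$ and keep the exact exponent $2\ell+2\in\{k+2,k+3\}$. For every $k\ge 2$ the term is then at most
\[
(C'\rho nq^2/\sqrt d)^k\,(C\rho k/\sqrt d)^{2}\ \text{ for even } k,
\qquad
(C'\rho nq^2/\sqrt d)^k\,(C\rho k/\sqrt d)^{3}\ \text{ for odd } k.
\]
The $o(1)$ geometric base beats the polynomial prefactor whether or not $C\rho k/\sqrt d$ exceeds $1$. Summing gives
\[
O(\rho^4n^2q^4/d^2)+O(\rho^6n^3q^6/d^3)=o(1/m),
\]
which is exactly the paper's computation. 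This also makes unnecessary your reduction from $k+3$ to $k+2$ for odd $k<\log m$, which silently relies on the same unjustified inequality. The extra $k=3$ term $O(\rho^6n^3q^6/d^3)=(\rho^4n^2q^4/d^2)^{3/2}$ is handled by the same assumption.
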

\begin{proof}
    Using \Cref{prop:fourierexpressionthing} and splitting the sum based on whether $|\alpha|$ is odd and even, we get 
    \begin{align*}
        &\sum_{ \substack{\alpha \subseteq R \\ |\alpha| \ge 2}} q^{2|\alpha|} \frac{\Expected{\SWnum{K_{1,\alpha}}{1} } \Expected{\SWnum{K_{1,\alpha}}{2} }}{(p(1-p))^{|\alpha|}} - \sum_{ \substack{\alpha \subseteq R \\ |\alpha| \ge 2}} q^{2|\alpha|} \frac{\term}{(p(1-p))^{|\alpha|}}  \\
        &\hspace{2cm}\le \sum_{k=1}^{\lfloor\frac{n-1}{2}\rfloor} \binom{n}{2k+1} q^{4k+2} \left( \frac{C (2k+1) \rho }{\sqrt{d}} \right)^{2k + 4} + \sum_{k=1}^{\lfloor\frac{n}{2}\rfloor} \binom{n}{2k} q^{4k} \left( \frac{2C k \rho }{\sqrt{d}} \right)^{2k + 2}\\ 
        &\hspace{2cm}\le \sum_{k=1}^{\lfloor\frac{n-1}{2}\rfloor} \left( \frac{2e n}{2k+1}\right)^{2k+1} q^{4k + 2}\left( \frac{C (2k + 1) \rho }{\sqrt{d}} \right)^{2k + 4} + \sum_{k=1}^{\lfloor\frac{n}{2}\rfloor} \left( \frac{2e n}{2k} \right)^{2k} q^{4k} \left( \frac{2 Ck \rho }{\sqrt{d}} \right)^{2k + 2}\\
        &\hspace{2cm}\le \sum_{k=1}^{\lfloor\frac{n-1}{2}\rfloor} \left( \frac{(2k+1)\rho}{\sqrt{d}}\right)^{3} \left( \frac{2Ce n q^2 \rho }{\sqrt{d}} \right)^{2k + 1} + \sum_{k=1}^{\lfloor\frac{n}{2}\rfloor} \left( \frac{2k \rho }{\sqrt{d}} \right)^{2} \left( \frac{2Ce n q^2 \rho }{\sqrt{d}} \right)^{2k}\\
        &\hspace{7cm} = O\left( \frac{n^3 q^6 \log(n)^{3} }{d^3} \right) + O\left( \frac{n^2 q^4 \log(n)^2 }{d^2} \right) = o\left(\frac{1}{m}\right),
    \end{align*}
    where the penultimate step is due to $m \ge n$ and thus, by assumption, $d \gg \bound \ge n^2 q^4 \log(m)$.
    Using further the assumption $d \gg \boundwedges \ge n \sqrt{m}q^2 \log(m)$ yields the $o\left(\frac{1}{m}\right)$ term.
\end{proof}

\subsubsection{Bounding the exponential via Gaussian Hypercontractivity}
Now, we are left with the following expression.
\begin{align*}
    &1 + \dchisq{\mathcal{L}(\mu \mid \Seventshort)}{\nu} \le \\ & \hspace{.7cm} (1 + o(1)) \ \underset{\latentstuff}{\mathbb{E}} \left[ \exp\left(  mq^2 \sum_{u\in R} \frac{ \Expected{ \SWnum{K_{1,\{u\}}}{1} } \Expected{ \SWnum{K_{1,\{u\}}}{2} }}{p(1-p)} \right.\right. + m \sum_{\substack{\emptyset \neq \alpha \subseteq R\\ 2\le |\alpha| \le \log(m)}} \left.\left. q^{2|\alpha|}\frac{ \term  }{(p(1-p))^{|\alpha|}}  \right) \right].
\end{align*}
To bound the above sum, we handle each term separately, which is justified after a repeated application of Cauchy-Schwarz in the following lemma.

\begin{lemma}\label{lem:thisisactuallywhatwewannabound}
    \begin{align*}
        &1  +\dchisq{\mathcal{L}(\mu \mid \Seventshort)}{\nu} \le (1 + o(1)) \Expectedsubb{ \latentstuff }{ \exp\left( 2m q^{2} \sum_{u\in R} \frac{ \Expected{ \SWnum{K_{1,\{u\}}}{1} } \Expected{ \SWnum{K_{1,\{u\}}}{2} }}{p(1-p)} \right) }^{1/2} \\
        &\hspace{6cm} \times \prod_{\substack{k = 2}}^{\log(m)} \Expectedsubb{ \latentstuff }{ \exp\left( m \left( \frac{ 2q^2 }{(p(1-p))}\right)^{k} \sum_{\alpha \subseteq R, |\alpha| = k}  \term  \right) }^{1/2^k}.
    \end{align*}
\end{lemma}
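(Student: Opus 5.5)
The starting point is the displayed bound preceding the lemma. It states that $1+\dchisq{\mathcal{L}(\mu\mid\Seventshort)}{\nu}$ is at most $(1+o(1))$ times $\Expectednop{\exp(A_1 + \sum_{k=2}^{\log(m)} A_k)}$. Here
$$
A_1 \coloneqq mq^2\sum_{u\in R} \frac{\Expectednop{\SWnum{K_{1,\{u\}}}{1}}\Expectednop{\SWnum{K_{1,\{u\}}}{2}}}{p(1-p)}, \qquad A_k \coloneqq m\sum_{|\alpha|=k} \frac{q^{2k}\,\term}{(p(1-p))^{k}} \quad (k\ge 2).
$$
The plan is to peel off one term at a time with Cauchy--Schwarz, in the form $\Expectednop{e^{X+Y}} \le \Expectednop{e^{2X}}^{1/2}\Expectednop{e^{2Y}}^{1/2}$. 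This is valid because all integrands are nonnegative exponentials of real random variables over the same probability space, namely the law of $(\xrnum{1},\xrnum{2})$ conditional on $\Seventshort$.

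Concretely, first split off $A_1$:
$$
\Expectednop{e^{A_1+B_2}} \le \Expectednop{e^{2A_1}}^{1/2}\,\Expectednop{e^{2B_2}}^{1/2}, \qquad B_j \coloneqq \sum_{k\ge j} A_k .
$$
This produces the first factor with the exponent $2mq^2(\cdots)$ and power $1/2$. Then apply Cauchy--Schwarz again to $\Expectednop{e^{2B_2}} = \Expectednop{e^{2A_2+2B_3}}$. This gives $\Expectednop{e^{4A_2}}^{1/2}\Expectednop{e^{4B_3}}^{1/2}$, so after raising to the power $1/2$ we obtain $\Expectednop{e^{4A_2}}^{1/4}$ times $\Expectednop{e^{4B_3}}^{1/4}$. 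Iterating by induction, at step $k$ we get the factor $\Expectednop{e^{2^kA_k}}^{1/2^k}$, with the tail $\Expectednop{e^{2^kB_{k+1}}}^{1/2^k}$ remaining.

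The factor $2^k$ is then absorbed into the prefactor. Since $2^k A_k = m\,(2q^2/(p(1-p)))^k\sum_{|\alpha|=k}\term$ up to the $q^{2k}$ versus $q^{k}$ bookkeeping, this matches the claimed form $m(2q^2/(p(1-p)))^k$. I will read the statement's $(2q^2/(p(1-p)))^k$ with the convention $2^k q^{2k}/(p(1-p))^k$.

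At the final index $k=\log(m)$ there is no tail left, but the last Cauchy--Schwarz produces exponent $2^{k}$ rather than $2^{k-1}$. To handle this, either stop the iteration one step early and bound the last factor trivially via $\Expectednop{e^{2^{K-1}A_K}}^{1/2^{K-1}} \le \Expectednop{e^{2^KA_K}}^{1/2^K}$ (Jensen/Lyapunov monotonicity of $L^r$ norms), or simply apply Cauchy--Schwarz with a dummy term $0$. Both routes give the stated product.

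The only delicate point is the bookkeeping of exponents and powers. Each split doubles the exponent of all remaining terms and halves the outer power, so the term $A_k$ ends up with multiplier $2^k$ and power $2^{-k}$, except for $A_1$, which gets multiplier $2$ and power $1/2$. No analytic estimates are needed beyond the previous two lemmas, which already justify dropping the large-$|\alpha|$ and $\restnum{1}\restnum{2}$ contributions into the $(1+o(1))$ factor. The $e^{o(1)}$ produced by those $o(1/m)$ errors, multiplied by $m$, is pulled out of the expectation before applying Cauchy--Schwarz.
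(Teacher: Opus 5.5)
Your proposal is correct and is the same argument as the paper's, which simply applies $\mathbb{E}[e^{X+Y}]\le \mathbb{E}[e^{2X}]^{1/2}\,\mathbb{E}[e^{2Y}]^{1/2}$ repeatedly (the paper's display writes $\exp(fg)$, a typo for $\exp(f+g)$). Your handling of the last index via $\mathbb{E}[Y]\le\mathbb{E}[Y^2]^{1/2}$ fills in a detail the paper leaves implicit, and your hedge about ``$q^{2k}$ versus $q^{k}$ bookkeeping'' is unnecessary, since $2^kA_k = m\,(2q^2/(p(1-p)))^k\sum_{|\alpha|=k}\Lambda_\alpha(\mathbf{X}_R^{(1)},\mathbf{X}_R^{(2)})$ holds exactly.
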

\begin{proof}
    The lemma follows after repeatedly applying the Cauchy-Schwarz inequality, i.e. 
    \begin{align*}
        \Expected{\exp(f g)} \le \Expected{\exp(2f)}^{1/2}\Expected{\exp(2g)}^{1/2}. 
    \end{align*}\qedhere
\end{proof}
\noindent We show that each term in the above product is $1 + o(1)$ using the following tail bound on polynomials with i.i.d. Gaussian inputs that follows from hypercontractivity.
\begin{proposition}[Concentration from Hypercontractivity]\label{prop:hypercontractivity}
    There are constants $C > 0, 1 > c > 0$ such that for every polynomial $f$ of degree $k \ge 1$ in i.i.d. standard Gaussian inputs $\mathbf{x}$ with $\Expectedsub{\mathbf{x}}{f(\mathbf{x})} = 0$, we have 
    $$
        \Pr{ |f(\mathbf{x})| \ge t \sigma(f)  } \le C\exp( - c t^{2/k})
    $$ where $\sigma = \sqrt{ \Var{f(\mathbf{x})}}$.
\end{proposition}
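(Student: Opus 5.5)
We combine the moment bound from \Cref{lem:hypercont} with Markov's inequality and then optimize over the moment order $q$. Fix a polynomial $f$ of degree $k \ge 1$ with $\Expectedsub{\mathbf{x}}{f(\mathbf{x})} = 0$. Then $\Var{f(\mathbf{x})} = \Expectedsub{\mathbf{x}}{f(\mathbf{x})^2} = \sigma^2$. If $\sigma = 0$, then $f = 0$ almost surely and there is nothing to show, so assume $\sigma > 0$. For every real $q \ge 2$, \Cref{lem:hypercont} gives $\Expectedsub{\mathbf{x}}{|f(\mathbf{x})|^q} \le (q-1)^{qk/2}\sigma^q$. Markov's inequality applied to $|f|^q$ then yields
$$
    \Pr{|f(\mathbf{x})| \ge t\sigma} \le \frac{(q-1)^{qk/2}\sigma^q}{t^q\sigma^q} \le \left(\frac{q^{k/2}}{t}\right)^{q}.
$$

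Next we choose $q$ so that the base is a constant below one. Set $q \coloneqq t^{2/k}/e$, which makes $q^{k/2}/t = e^{-k/2} \le e^{-1/2}$. The right-hand side is then at most $\exp(-q/2) = \exp(-t^{2/k}/(2e))$. This choice is admissible only when $q \ge 2$, that is, when $t \ge (2e)^{k/2}$.

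For the remaining range $t < (2e)^{k/2}$ we have $t^{2/k} < 2e$. There we simply use the trivial bound $\Pr{\cdot} \le 1 \le C\exp(-c\,t^{2/k})$, which holds as soon as $C \ge e^{2ec}$. Taking $c = 1/(2e) < 1$ and $C = e$ covers both ranges uniformly in $k$, and this gives the claim.

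The only point that needs care is that the constants must not depend on $k$. This is why we use the normalization $q^{k/2}/t = e^{-k/2}$, which is bounded by $e^{-1/2}$ uniformly in $k \ge 1$, together with the threshold $t^{2/k} \ge 2e$, which is also independent of $k$. We also note that \Cref{lem:hypercont} is applied with non-integer $q$, which is allowed since it holds for all real $q \ge 2$.
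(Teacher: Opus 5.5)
Your argument is correct and is essentially the paper's proof: Markov's inequality applied to $|f|^q$, the moment bound of \Cref{lem:hypercont}, and the choice $q \asymp t^{2/k}$ (the paper takes $q = \frac{1}{2}t^{2/k}$ and gets $c = \frac{\log 2}{4}$), with your version additionally treating explicitly the small-$t$ range where $q < 2$, which the paper silently absorbs into $C$. One small slip concerns the degenerate case: if $\sigma = 0$, the event $\{|f(\mathbf{x})| \ge 0\}$ has probability one, so the bound would fail rather than hold trivially; this case simply cannot occur, since a polynomial of degree $k \ge 1$ is non-constant and therefore has positive variance under the Gaussian measure.
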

\begin{proof}
    For any $q \ge 2$, due to $\sigma(f) = \|f\|_2$, and due to Markov's inequality, we have that 
    \begin{align*}
        \Pr{ |f(\mathbf{x})| \ge t \sigma(f) } = \Pr{ |f(\mathbf{x})|^q \ge t^q \|f\|_2^q } \le \frac{\Expectedsub{\mathbf{x}}{|f(\mathbf{x})|^q}}{t^q \|f\|_2^q}.
    \end{align*} Applying hypercontractivity (\Cref{lem:hypercont}), we get 
    $$
    \Pr{ |f(\mathbf{x})| \ge t \sigma(f) } \le \frac{(q-1)^{qk/2}}{t^q} = \exp\left( \frac{k}{2}q\log(q-1) - q\log(t) \right).
    $$ Setting $q = \frac{1}{2}t^{2/k}$ yields 
    \begin{align*}
        \Pr{ |f(\mathbf{x})| \ge t \sigma(f) } &\le \exp\left( \frac{k}{4}t^{2/k}\log(t^{2/k} / 2 ) - \frac{1}{2} t^{2/k}\log(t) \right)
        \\ &= \exp\left( \frac{1}{2} t^{2/k} \left( \frac{k}{2}\log(t^{2/k}) + \frac{k}{2}\log\left( 1/2 \right) - \log(t) \right) \right) = \exp\left( \frac{k}{4} \log(1/2) t^{2/k} \right),
    \end{align*}
    which yields our lemma for $c = \frac{k}{4} \log(2) \ge 0.17$ for all $k \ge 1$.
\end{proof}
We mainly use the above to derive the following statement asserting that $\Expected{\exp(f)}$ is typically of order $1 + \sigma(f)$.
\begin{lemma}[Exponential integrability by hypercontractivity]\label{lem:integrationbytails}
    Let $f$ be a polynomial of degree $\ell \ge 2$ in $i.i.d.$ standard Gaussian inputs $\mathbf{x}$ with $\Expectedsub{\mathbf{x}}{f(\mathbf{x})} = 0$ and variance $\sigma \coloneqq \sqrt{ \Var{f(\mathbf{x})}}$. Let further $c$ be the constant in the exponent of the tail bound from \cref{prop:hypercontractivity} and assume that $|f(\mathbf{x})| \le U$ for all $\mathbf{x} \in S$ where $S$ is some measurable set. Assume $U\sigma^{\frac{2}{\ell-2}} \le \frac{c^2}{4}$ if $\ell \ge 3$, and that $\sigma \le \frac{c}{2}$ if $\ell = 2$. Note that in case $\ell = 2$, we allow $U = \infty$. Then, there is a constant $C > 0$ independent of $f$ such that \begin{align*}
        \Expectedsub{\mathbf{x}}{ \mathds{1}( \mathbf{x} \in S) \exp(f(\mathbf{x})) } \le 1 + (C \ell)^{\frac{\ell + 2}{2}} \sigma.
    \end{align*}
\end{lemma}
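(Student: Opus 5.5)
The plan is to write the expectation as a tail integral and to use the tail bound of \Cref{prop:hypercontractivity} in the range where it beats the exponential growth. The bound $U$ on $S$ handles the remaining range. We may assume $f>0$ contributes and write $e^{f}\le 1+f+(e^{f}-1-f)$. Since $\Expectedsub{\mathbf{x}}{f}=0$, dropping the indicator on the linear part gives $\Expectednop{\mathds{1}(\mathbf{x}\in S)(1+f)} \le 1+\Expectednop{|f|}\le 1+\sigma$. The indicator only costs a term $\Expectednop{\mathds{1}(\mathbf{x}\notin S)|f|}\le\sigma$. It remains to bound $\Expectednop{\mathds{1}(\mathbf{x}\in S)\, g(f)}$ with $g(y)=e^{y}-1-y\ge 0$. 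We have $g(y)\le y^2 e^{|y|}$, and for $y\le 0$ simply $g(y)\le y^2$.

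\textbf{Case $\ell=2$.} No truncation is needed. We use $\Expectednop{f^2 e^{|f|}} = \int_0^\infty (2s+s^2)e^{s}\,\Pr{|f|\ge s}\,\d s$. The tail bound gives $\Pr{|f|\ge s}\le C e^{-cs/\sigma}$. Because $\sigma\le c/2$, we have $c/\sigma - 1\ge c/(2\sigma)$, so the integral is at most $C\int_0^\infty (2s+s^2)e^{-cs/(2\sigma)}\,\d s = O(\sigma^2)+O(\sigma^3)$. This is at most $C'\sigma$, which gives the claim with constant $(2C)^2$.

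\textbf{Case $\ell\ge 3$.} Here we use the truncation $|f|\le U$ on $S$. We get
\[
\Expectednop{\mathds{1}(\mathbf{x}\in S)\,g(f)} \le \int_0^U (e^{s}-1)\,\Pr{|f|\ge s}\,\d s \le C\int_0^U (e^{s}-1)\exp\!\big(-c (s/\sigma)^{2/\ell}\big)\,\d s .
\]
The key point is to compare the exponents $s$ and $c\, s^{2/\ell}\sigma^{-2/\ell}$ on $[0,U]$. Since $s^{2/\ell}$ is concave, the ratio $s/(s^{2/\ell}\sigma^{-2/\ell}) = s^{1-2/\ell}\sigma^{2/\ell}$ is maximized at $s=U$. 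There it equals $(U\sigma^{2/(\ell-2)})^{(\ell-2)/\ell}$. By the hypothesis $U\sigma^{2/(\ell-2)}\le c^2/4$, this is at most $(c^2/4)^{1/3}$ or so; in particular it is at most $c/2$ once $c<1$ is small, which is the point where the precise hypothesis must be checked and possibly $c$ shrunk. Hence $s\le \tfrac{c}{2}(s/\sigma)^{2/\ell}$ on $[0,U]$. Using $e^{s}-1\le s e^{s}$, the integral is bounded by $C\int_0^\infty s\exp(-\tfrac c2 (s/\sigma)^{2/\ell})\,\d s$. Substituting $s=\sigma v$ turns this into $\sigma^2$ times a Gamma integral of order $(2/c)^{\ell}\,\Gamma(\ell)\,\ell$. \Cref{lem:stirling} bounds that Gamma factor by $(C\ell)^{\ell}$.

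\textbf{Main obstacle.} The delicate point is the final bookkeeping. We must convert $\sigma^2\,(C\ell)^{\ell}$ into the claimed $(C\ell)^{(\ell+2)/2}\sigma$. To do this, use $\sigma\le 1$, which follows from the hypothesis, and absorb factors; alternatively, bound $s e^{s}$ more carefully, splitting at $s\le\sigma$ where $e^s=O(1)$. If the exponent of $\ell$ does not come out exactly, I would instead apply Cauchy--Schwarz: $\Expectednop{\mathds{1}_S f^2 e^{|f|}}\le \|f\|_4^2\,\Expectednop{\mathds{1}_S e^{2|f|}}^{1/2}$. Then \Cref{lem:hypercont} gives $\|f\|_4^2\le 3^{\ell}\sigma^2$, and the truncated exponential moment is $O(1)$ by the same tail comparison. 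This yields a bound of the form $(C\ell)^{O(\ell)}\sigma$, matching the stated form up to adjusting constants.
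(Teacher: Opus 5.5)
For $\ell\ge 3$ there is a genuine gap: your argument does not prove the stated $\ell$-dependence, and it fails exactly at the step you call the main obstacle. Your case $\ell=2$ is fine; it even gives $1+\sigma+O(\sigma^2)$.

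With $e^{s}-1\le se^{s}$, your remainder becomes $\sigma^{2}\cdot C(2/c)^{\ell}\tfrac{\ell}{2}\Gamma(\ell)$, which is of order $\sigma^{2}(C\ell)^{\ell+1}$. After spending $\sigma\le 1$ you are left with $(C\ell)^{\ell+1}\sigma$. This exceeds the claimed $(C\ell)^{(\ell+2)/2}\sigma$ by a factor of order $(C\ell)^{\ell/2}$. Since $\ell$ is not a constant (in the paper's application the degree runs up to about $\log m$), no choice of $C$ absorbs this. So a bound of the form $(C\ell)^{O(\ell)}\sigma$ does \emph{not} match the statement ``up to adjusting constants''.

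The Cauchy--Schwarz fallback is only a sketch. It needs control of $\mathbb{E}[\mathbf{1}_S e^{2|f|}]$, but your comparison gives only $s\le \tfrac{c}{2}(s/\sigma)^{2/\ell}$ on $[0,U]$. For $e^{2s}$ the exponent $2s-c(s/\sigma)^{2/\ell}$ is then merely $\le 0$, so the same tail argument yields $O(U)$, not $O(1)$.

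A minor point: $\sigma\le 1$ does not follow from the hypothesis, which bounds only $U\sigma^{2/(\ell-2)}$. It is a harmless reduction, since $\sigma>1$ forces $U\le c^2/4$ and then $e^{U}\le 1+\sigma$.

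The missing idea is to not chase the second-order gain at all. The paper bounds $\mathbb{E}[\mathbf{1}_S e^{f}]\le\mathbb{E}[\min(e^{|f|},e^{U})]=1+\int_0^U e^{s}\,\mathbb{P}(|f|>s)\,\mathrm{d}s$. It then applies the same comparison to bound the integrand by $C\exp(-\tfrac{c}{2}(s/\sigma)^{2/\ell})$. Substituting $t=\tfrac{c}{2}(s/\sigma)^{2/\ell}$ gives $C\sigma(2/c)^{\ell/2}\tfrac{\ell}{2}\Gamma(\ell/2)\le (C\ell)^{(\ell+2)/2}\sigma$.

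In your notation this is just $e^{s}-1\le e^{s}$ in place of $e^{s}-1\le se^{s}$. You give up the extra factor $\sigma$, which you throw away anyway. In exchange the Gamma factor drops from $\Gamma(\ell)$ to $\Gamma(\ell/2)$, and that is exactly where the exponent $(\ell+2)/2$ comes from. Alternatively, your $\sigma^2$ bound can be rescued by a case split. Use it when $\sigma\lesssim (C\ell)^{(\ell+2)/2}/A$, where $A$ is the coefficient of $\sigma^2$. Otherwise use the trivial bound $e^{U}$ together with $U\le \tfrac{c^2}{4}\sigma^{-2/(\ell-2)}$.

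Finally, your comparison step is valid only for $\ell\ge 4$. At $s=U$ the ratio is at most $(c/2)^{2(\ell-2)/\ell}$, which is $\le c/2$ if and only if $\ell\ge 4$. For $\ell=3$ it can be as large as $(c/2)^{2/3}>c/2$, and shrinking $c$ makes this worse, not better. The paper's proof takes the same step and has the same limitation. It is harmless there, since the lemma is only applied to polynomials of even degree, so this point is not specific to your route.
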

\begin{proof}
    Assume for now that $\ell \ge 3$. Then,
    \begin{align*}
        \Expected{\exp(f) \mathds{1}(\mathbf{x} \in S) } &\le \Expected{\exp(|f|) \mathds{1}(|f| \le U) } \le \int_{0}^{e^U} \Pr{\exp(|f|) > t} \d t.
    \end{align*}
    Integration by substitution then yields
    \[
        \int_{0}^{e^U} \Pr{\exp(|f|) > t} \d t 
        = \int_{-\infty}^{U} \Pr{|f| > s} \exp(s) \d s
        \le 1 + \int_{0}^{U} \Pr{|f| > s} \exp(s) \d s 
        \le 1 + C' \int_{0}^{U} \exp\left(- c \left( \frac{s}{\sigma} \right)^{2/\ell} + s \right) \d s.
    \]
    Due to the assumption $U\sigma^{\frac{2}{\ell-2}} \le \frac{c^2}{4}$, we get 
    $
        s - c \left( \frac{s}{\sigma} \right)^{2/\ell} \le -\frac{c}{2}\left( \frac{s}{\sigma} \right)^{2/\ell},
    $ so 
    $$
        \Expected{\exp(f) \mathds{1}(\mathbf{x} \in S) } \le 1 + C' \int_{0}^{U} \exp\left(- \frac{c}{2} \left( \frac{s}{\sigma} \right)^{2/\ell}  \right) \d s.
    $$
    Substituting $t = \frac{c}{2} \left( \frac{s}{\sigma} \right)^{2/\ell}$ yields 
     \begin{align*}
         \Expected{\exp(f) \mathds{1}(\mathbf{x} \in S) } &\le 1 + \frac{1}{2}C'\sigma \ell \left(\frac{2}{c}\right)^{\ell /2} \int_{0}^{\infty} t^{\frac{\ell - 2}{2}} e^{-t} \d s \le 1 + \frac{1}{2}C'\sigma \ell \left(\frac{2}{c}\right)^{\ell /2} \Gamma\left(\frac{\ell}{2}\right) \le 1 + \sigma (C\ell)^{\frac{\ell + 2}{2}},
    \end{align*}
    as desired. 
    In case $\ell = 2$, we get from analogous calculations that 
    \begin{align*}
        \Expected{\exp(f) \mathds{1}(\mathbf{x} \in S) } &\le  1 +C' \int_{0}^{\infty} \exp\left(- c \left( \frac{s}{\sigma} \right)^{2/\ell} + s \right) \d s \le  1 + C'\int_{0}^{\infty} \exp\left( -\frac{cs}{2\sigma} \right) \d s \le 1 + 2C\sigma,
    \end{align*}
    as desired.
\end{proof}

\newcommand{\sigsq}[2]{\sigma^2_{#1}(#2)}
\begin{lemma}
    For each $2 \le k \le \log(m)$, $$
        \Phi_k \coloneqq \Expectedsubb{ \latentstuff }{ \exp\left( m \left( \frac{ 2q^2 }{(p(1-p))}\right)^{k} \sum_{\alpha \subseteq R, |\alpha| = k} \term  \right) } = 1 + o(1). \qedhere
    $$
\end{lemma}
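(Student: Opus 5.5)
The plan is to recognize the exponent as a polynomial in the Gaussian inputs $\xrnum{1},\xrnum{2}$ and apply \Cref{lem:integrationbytails}. Write
\[
f_k(\xrnum{1},\xrnum{2}) \coloneqq m \left( \tfrac{2q^2}{p(1-p)}\right)^{k} \sum_{\alpha \subseteq R,\, |\alpha|=k} \term .
\]
The leading part $\sum_\alpha \leadingnum{1}\leadingnum{2}$ is, by the explicit formula in \Cref{prop:fourierexpressionthing}, a polynomial of degree at most $2\ell \le k+1$ in the entries of each of $\xrnum{1},\xrnum{2}$. Each factor is a product of $\ell=\lceil k/2\rceil$ centered inner products $\frac1d\langle \mathbf{x}_u,\mathbf{x}_v\rangle-\sighat^2\mathbf{I}_{u,v}$ covering $\alpha$. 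The cross terms $\leadingnum{1}\restnum{2}$ are not explicit polynomials, so I would treat them separately.

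\textbf{Step 1: dispose of the cross terms deterministically.} On $\Seventshort$, \Cref{prop:fourierexpressionthing} gives $|\leadingnum{1}\restnum{2}|\le (Ck\rho/\sqrt d)^{2\ell+1}$. Summing over $\binom{n}{k}$ sets gives at most
\[
m\,(2e n q^2 C'\rho/\sqrt d)^{k}\,(Ck\rho/\sqrt d)^{2\ell+1-k}.
\]
For even $k$ this is $m\,(nq^2\rho/\sqrt d)^k\cdot k\rho/\sqrt d$. With $k\ge 2$, this is at most $m\, n^2q^4\rho^3/d^{3/2}\cdot \mathrm{polylog}=o(1)$, using $d\gg nmq^4\log n$ and $d\gg m\sqrt n q^2\log n$. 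For odd $k$ the extra power $2\ell+1-k=2$ helps similarly. So these terms contribute a factor $e^{o(1)}$ deterministically.

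\textbf{Step 2: centering.} The remaining exponent is $g_k=m(\cdot)^k\sum_\alpha \leadingnum{1}\leadingnum{2}$. Since $\xrnum{1},\xrnum{2}$ are independent, $\mathbb{E}[g_k]=m(\cdot)^k\sum_\alpha \mathbb{E}[\leadingnum{1}]^2$. I will show this mean is small. Terms in $\mathbb{E}\Lambda_\alpha$ are nonzero only when the multigraph of pairs $r_j$ on $\alpha$ has even-degree structure in expectation. Using $|\sighat^2-1|\le C/\sqrt d$ from \Cref{lem:sigbound} for diagonal pairs, each $\mathbb{E}\Lambda_\alpha=O((k/\sqrt d)^{\ell}\cdot d^{-\ell/2}\cdot\mathrm{poly}(k))$ or so. This makes the mean $o(1)$, and I absorb $e^{\mathbb{E}g_k}$.

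\textbf{Step 3: variance and integration.} I then compute $\sigma^2=\mathrm{Var}(g_k)$ by expanding $\sum_{\alpha,\alpha'}\mathbb{E}[\Lambda_\alpha\Lambda_{\alpha'}]^2$. The key is that centered inner products over distinct pairs are orthogonal, so only $\alpha,\alpha'$ sharing their pair structure correlate. This should give
\[
\sigma \lesssim m\,(q^2)^k\, n^{k/2}\,(Ck/\sqrt d)^{2\ell}\cdot(\text{overlap count}),
\]
which is roughly $m(nq^4/d)^{k/2}\cdot$ corrections. For $k=2$ this is $mnq^4/d\cdot\mathrm{polylog}=o(1)$, and larger $k$ decays geometrically. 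The deterministic bound $U=|g_k|\le m(2e nq^2C\rho/\sqrt d)^k$ holds on $\Seventshort$. Because the integrand is nonnegative, I drop the conditioning on $\Seventshort$, paying only a factor $1/\Prnop{\Seventshort}=1+o(1)$ by \Cref{lem:sislikely}. I keep $\mathds{1}(\Seventshort)$ as the set $S$ in \Cref{lem:integrationbytails}, which yields $\Phi_k\le (1+o(1))(1+(Ck)^{O(k)}\sigma)$.

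\textbf{Main obstacle.} The hard part is the variance computation together with the condition $U\sigma^{2/(\ell'-2)}\le c^2/4$ with degree $\ell'\approx 2k$. The $(Ck)^{O(k)}$ factors must be beaten by $\sigma$'s geometric decay for $k\le\log m$, and $U$ is only polynomially controlled. I would first verify the case $k=2$ by hand; there the degree is 4, the polynomial is a sum of products of $\langle\mathbf{x}_u,\mathbf{x}_v\rangle$ terms, and the condition reduces to $U\sigma\ll1$. Then I would argue for general $k$ by counting the coverings $r_1,\dots,r_\ell$ and overlaps, accepting $\log$-factor losses absorbed by the $\log n$ slack in the assumptions.
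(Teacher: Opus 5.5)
Your proposal has a genuine gap, and it appears in two places. Both come from treating the two replicas jointly.

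\textbf{Step 1 fails.} Bounding $\sum_{|\alpha|=k}\Lambda_\alpha(\xrnum{1})\,r_\alpha(\xrnum{2})$ termwise on $S_\rho$ discards all cancellation over $\alpha$, and the resulting bound is not $o(1)$ in the admissible range. Take $k=2$, $m=n$, $q=n^{-1/2}$, $d=n^{0.6}$. Then $nmq^4\log n=\log n\ll d$ and $m\sqrt n q^2\log n=\sqrt n\log n\ll d$. Yet your bound is $m n^2q^4\rho^3/d^{3/2}\asymp n^{0.1}(\log n)^{3/2}\to\infty$. What actually controls this term is that, for fixed $\xrnum{2}$, the $\Lambda_\alpha(\xrnum{1})$ are centered and (essentially) orthogonal across distinct $\alpha$. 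So the fluctuation is of order $\big(\sum_\alpha \mathbb{E}[\Lambda_\alpha^2]\,r_\alpha(\xrnum{2})^2\big)^{1/2}$, rather than $\binom{n}{k}$ times the worst-case term.

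\textbf{Step 3 also breaks.} Viewing $\sum_\alpha\Lambda_\alpha(\xrnum{1})\Lambda_\alpha(\xrnum{2})$ as a joint polynomial doubles the degree to $4\ell$. The hypothesis of \Cref{lem:integrationbytails} then fails already at $k=2$. With $q$ constant, $m=n$ and $d=\omega n^2\log n$ for slowly growing $\omega$, your $U\asymp mn^2q^4\rho^2/d\asymp n/\omega$ and $\sigma\asymp mnq^4/d\asymp 1/(\omega\log n)$. Hence $U\sigma\asymp n/(\omega^2\log n)\to\infty$, whereas the lemma needs $U\sigma\le c^2/4$.

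\textbf{The missing idea is to condition on one replica.} The paper first splits the exponent into its $\Lambda\Lambda$, $\Lambda r$ and $r\Lambda$ parts by Cauchy--Schwarz. In each part it fixes the replica carrying the non-polynomial or merely bounded factor: $\xrnum{2}\in S_\rho$ for the first two parts, $\xrnum{1}\in S_\rho$ for the third.

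Each exponent is then a polynomial of degree $2\ell$ in the other replica only. Its coefficients, $\Lambda_\alpha(\xrnum{2})$ or $r_\alpha(\cdot)$, are bounded via \Cref{prop:fourierexpressionthing}. After dropping the conditioning on that replica by nonnegativity, \Cref{lem:integrationbytails} is applied with the conditional variance. The requirement becomes the much weaker $U\sigma^{2/(2\ell-2)}\le c^2/4$.

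For $k=2$ the degree is $2$, so only $\sigma\le c/2$ is needed, and indeed $\sigma\lesssim mnq^4\rho/d=o(1)$. This one-sided treatment also absorbs the cross terms that defeated your Step 1.

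\textbf{Step 2 is unnecessary.} $\mathbb{E}\,\Lambda_\alpha=0$ exactly for $|\alpha|\ge 2$. With only $\lceil|\alpha|/2\rceil$ tuples, the coverage constraint forces some $\mathbf{x}_u$ to appear in a single off-diagonal inner product, which kills the expectation.
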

\begin{proof}
    Recall from the definition of the term $\term$ that $$
        \term = \defterm
    $$ where $\leadingnum{1}, \leadingnum{2}$ are polynomials of degree $\lceil k/2\rceil$ in $\xrnum{1}, \xrnum{2}$, respectively. Therefore, let us define 
    \begin{align*}
        f_{\alpha}(\xrnum{1}, \xrnum{2}) &\coloneqq m \left( \frac{ 2 q^{2} }{(p(1-p))}\right)^{k} \sum_{\alpha \subseteq R, |\alpha| = k} \leadingnum{1}\leadingnum{2}  \\
        g_{\alpha}(\xrnum{1}, \xrnum{2}) &\coloneqq m \left( \frac{ 2 q^2 }{(p(1-p))}\right)^{k} \sum_{\alpha \subseteq R, |\alpha| = k} \leadingnum{1}\restnum{2} \\
        h_{\alpha}(\xrnum{1}, \xrnum{2}) &\coloneqq m \left( \frac{ 2 q^2 }{(p(1-p))}\right)^{k} \sum_{\alpha \subseteq R, |\alpha| = k} \restnum{1}\leadingnum{2}.
    \end{align*}
    Note that the three quantities $f_{\alpha}(\xrnum{1}, \xrnum{2}), g_{\alpha}(\xrnum{1}, \xrnum{2}), h_{\alpha}(\xrnum{1}, \xrnum{2})$ each a polynomial of degree $2\ell$ where $\ell \coloneqq \lceil k/2 \rceil$.
    To bound $\Phi_k$, we once again apply Cauchy-Schwarz and arrive at 
    $$
        \Phi_k \le \Expectedsubb{ \latentstuff }{ \exp\left( 2 f_{\alpha}(\xrnum{1}, \xrnum{2}) \right) }^{1/2} \Expectedsubb{ \latentstuff }{ \exp\left( 4 g_{\alpha}(\xrnum{1}, \xrnum{2}) \right) }^{1/4} \Expectedsubb{ \latentstuff }{ \exp\left( 4 h_{\alpha}(\xrnum{1},  \xrnum{2}) \right) }^{1/4}.
    $$ 
    We bound each of the three terms by removing the conditioning on one of $\xrnum{1}, \xrnum{2}$ and then applying Gaussian hypercontractivity. Specifically, for a fixed $\xrnum{2} \in S$, we use the non-negativity of the integrand to bound
    \begin{align*}
        \Expectedsubb{ \xrnum{1} \sim \Seventshort }{ \exp\left( 2 f_{\alpha}(\xrnum{1}, \xrnum{2} ) \right) } = \Pr{\Seventshort}^{-1} \Expectedsubb{ \xrnum{1} \sim \mathcal{N}(0, \mathbf{I}_d)^{\otimes n} }{ \mathds{1}(\xrnum{1} \in \Seventshort) \exp\left( 2 f_{\alpha}(\xrnum{1}, \xrnum{2}) \right) }.
    \end{align*}
    Now, we wish to apply \Cref{lem:integrationbytails}. To this end, we use the following estimates about the magnitude and variance of $f, g, h$. To this end, let us specify what exactly we mean by variance. We define 
    \begin{align*}
        \sigsq{f_\alpha}{\xrnum{2}} &\coloneqq \Expectedsubb{\xrnum{1} \sim \mathcal{N}(0, \mathbf{I}_d)^{\otimes n}}{ \left. f_\alpha(\xrnum{1}, \xrnum{2})^2 \ \right| \ \xrnum{2} } \\
        \sigsq{g_\alpha}{\xrnum{2}} &\coloneqq \Expectedsubb{ \xrnum{1} \sim \mathcal{N}(0, \mathbf{I}_d)^{\otimes n}}{ \left. g_\alpha(\xrnum{1}, \xrnum{2})^2  \ \right| \ \xrnum{2} } \\
        \sigsq{h_\alpha}{\xrnum{1}}
        &\coloneqq \Expectedsubb{ \xrnum{2} \sim \mathcal{N}(0, \mathbf{I}_d)^{\otimes n}}{ \left. h_\alpha(\xrnum{1}, \xrnum{2})^2  \ \right| \ \xrnum{1} }.
    \end{align*}
    \newcommand{\sigtil}{\widehat{\sigma}}
    \begin{claim}\label{clm:variance}
        There is a constant $C > 0$ independent of $k, \alpha$, such that for $\xrnum{1}, \xrnum{2} \in S$ and $\ell \coloneq \lceil k/2 \rceil$, we have 
        \begin{align*}
            |f_\alpha(\xrnum{1}, \xrnum{2})|, |g_\alpha(\xrnum{1}, \xrnum{2})|, |h_\alpha(\xrnum{1}, \xrnum{2})| &\le m (n q^{2})^{ k } \left( \frac{C k \log(n)^{\frac{1}{2}} }{\sqrt{d}} \right)^{2\ell} \eqqcolon U \\
            \text{and } \sigsq{f_\alpha}{\xrnum{2}}, \sigsq{g_\alpha}{\xrnum{2}}, \sigsq{h_\alpha}{\xrnum{1}} &\le m^2 n^{2\ell} q^{4k} k^3 \left( \frac{C k \log(n)^{\frac{1}{2}} }{\sqrt{d}} \right)^{4\ell} \eqqcolon \sigtil^2
        \end{align*}
    \end{claim}
    \noindent The proof is deferred to \Cref{sec:deferredproofs}. With the above bounds, note that for even $k \ge 4$, we have 
    \begin{align*}
        U\sigtil^{\frac{2}{2\ell-2}} \le \left( \frac{C k \log(n)^{\frac{1}{2}} nq^2 }{\sqrt{d}} \right)^{k} \left( \frac{C^2 k^2 \log(n) n q^4 }{d} \right)^{\frac{k}{k - 2}} m^{\frac{2}{k-2}} k^{\frac{3}{k-2}} = o(1)
    \end{align*} 
    since $d \gg \bound$. 
    For odd $k \ge 3$, we get $2\ell = k + 1$ and therefore 
    \begin{align*}
        \sigtil^{\frac{2}{2\ell-2}} = \sigtil^{\frac{2}{k-1}} &\le \left( m^2 n^{k+1}q^{4k}k^3 \left( \frac{C^2 k^2 \log(n) }{d} \right)^{k+1}  \right)^{\frac{1}{k-1}}  \le 
        \left( \frac{C^{4} \log(n)^{2} k^7 m^2 n^2 q^4}{d^2} \right)^{\frac{1}{k-1}} \frac{C^2 \log(n) k^2 nq^4 }{d}\\
        \text{and thus } U\sigtil^{\frac{2}{2\ell-2}} &\le \left( \frac{C^{4} \log(n)^{2} k^7 m^2 n q^4}{d^2} \right)^{\frac{1}{k-1}} \frac{C^2 \log(n) k^2 m n^{\frac{1}{k-1}} q^2 }{d} \left( \frac{C \log(n)^{\frac{1}{2}} k nq^2 }{\sqrt{d}} \right)^{k+1} = o(1)
    \end{align*} since $d \gg \boundwedges$ and $d \gg \bound$. This makes \Cref{lem:integrationbytails} applicable and yields 
    \begin{align*}
        \Expectedsubb{ \xrnum{1} \sim \Seventshort }{ \exp\left( 2 f_{\alpha}(\xrnum{1}, \xrnum{2}) \right) } \le \Pr{\Seventshort}^{-1} \left(1 +  (C \ell)^{\frac{2\ell+2}{2}} \sigtil \right) \le
            \Pr{\Seventshort}^{-1} \left( 1 + Ck^4 \left(\frac{k}{m} \right)^{\frac{k-2}{2}} \left( \frac{C^2 \log(n) k m n q^4 }{d} \right)^{\frac{k}{2}}\right)
    \end{align*} if $k$ is even (in this case $2\ell = k$), and 
    \begin{align*}
        \Expectedsubb{ \xrnum{1} \sim \Seventshort }{ \exp\left( 2 f_{\alpha}(\xrnum{1}, \xrnum{2}) \right) } &\le \Pr{\Seventshort}^{-1} \left( 1 + (C \ell)^{\frac{2\ell+2}{2}} \sigtil \right) \\
        &\le 
            \Pr{\Seventshort}^{-1} \left( 1 + C^2k^6 \left(\frac{C^4\log(n)^{2} m^2 n q^4 }{d^2}\right)^{\frac{1}{2}} \left( \frac{C^3 \log(n) k^3 n^{1 + \frac{1}{k-1}} q^4 }{d} \right)^{\frac{k-1}{2}}\right).
    \end{align*} if $k$ is odd. In both cases, this evaluates to $1 + o(1)$ since $d \gg \bound$ and $d \gg \boundwedges$. The same reasoning applies to $g_{\alpha}$ and $h_{\alpha}$.
\end{proof}

\subsubsection{Bounding the exponential sum over all edges}

The only thing that is left to be bounded is the first term in \Cref{lem:thisisactuallywhatwewannabound} which is an exponential sum over expected signed weights of single edges. The idea here is that we can again use non-negativity of the integrand to remove the conditioning and then expand the exponential by a Taylor series. The result then factorizes as a sum over the (unconditional) expected signed weight of stars, which can be bounded explicitly. We capture all this in the following lemma.
\begin{lemma}
$$
    \Expectedsubb{ \latentstuff }{ \exp\left( 2mq^2 \sum_{u\in R} \frac{ \Expected{ \SWnum{K_{1,\{u\}}}{1} } \Expected{ \SWnum{K_{1,\{u\}}}{2} }}{p(1-p)} \right) }^{1/2} = 1 + o(1).
$$
\end{lemma}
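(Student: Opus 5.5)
The plan is to exploit the fact that for a single edge the conditional signed weight depends only on the \emph{length} of the leaf's latent vector. Once the conditioning on $\Seventshort$ is removed, the whole exponential therefore factorizes over $u \in R$. Write $a_u^{(k)} \coloneqq \Expected{\SWnum{K_{1,\{u\}}}{k}} = \Pr{\langle \mathbf{x}^{(k)}_u, \mathbf{x}\rangle \le \sqrt{d}\,\tau \mid \mathbf{x}_u^{(k)}} - p = \Phi\big(\tau\sqrt{d}/\|\mathbf{x}_u^{(k)}\|\big) - p$. Set $\lambda \coloneqq 2mq^2/(p(1-p))$.

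\textbf{Step 1: remove the conditioning and factorize.} Since the integrand is non-negative, I bound the expectation over $\xrnum{1},\xrnum{2}\sim \Seventshort$ by $\Pr{\Seventshort}^{-2}$ times the unconditional expectation over i.i.d.\ Gaussian $\xrnum{1},\xrnum{2}$. By \Cref{lem:sislikely}, $\Pr{\Seventshort}^{-2} = 1+o(1)$. Unconditionally the $a_u^{(k)}$ are independent across $u\in R$ and across $k\in\{1,2\}$, so the expectation equals $\big(\Expected{\exp(\lambda a^{(1)}a^{(2)})}\big)^n$, where $a^{(1)},a^{(2)}$ are i.i.d.\ copies of $a \coloneqq a_u$. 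It therefore suffices to show that $\Expected{\exp(\lambda a^{(1)} a^{(2)})} = 1 + o(1/n)$. This is the factorized form of the paper's idea of Taylor-expanding into unconditional signed weights of disjoint unions of stars.

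\textbf{Step 2: Taylor expansion.} Expanding the exponential and using independence of the two copies gives $\Expected{\exp(\lambda a^{(1)}a^{(2)})} = \sum_{j\ge 0}\frac{\lambda^j}{j!}\Expected{a^j}^2$. The $j=1$ term vanishes because $\Expected{a} = \Expected{\sigma - p} = 0$ unconditionally. For $p=\frac12$ we have $\tau=0$, so $a\equiv 0$ and the claim is trivial. For $p\neq \frac12$ I need a moment bound $|\Expected{a^j}| \le \Expected{|a|^j} \le (Cj/d)^{j/2}$. To get it, note that $x\mapsto \Phi(\tau/x)$ is Lipschitz near $x=1$, so
\[
|a| \le C\,\big|\|\mathbf{x}_u\|/\sqrt{d} - 1\big| + |\Phi(\tau) - p| .
\]
The second term is $O(1/\sqrt{d})$ by the Berry--Esseen step in the proof of \Cref{lem:sigbound}. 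The first has subgaussian tails at scale $1/\sqrt{d}$ by Gaussian norm concentration. Since $|a|\le 1$ always, the tail region where the Lipschitz bound fails contributes only $e^{-cd}$, which is negligible because $d\gg\log(n)^3$.

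\textbf{Step 3: sum the series.} With the moment bound, the $j$-th term is at most $\frac{\lambda^j}{j!}(Cj/d)^j \le (C'\lambda/d)^j$. The condition $d \gg \boundwedges$ gives $\lambda/d = O(mq^2/d) = o(1)$, so the series is geometric and dominated by its $j=2$ term, which is $O(\lambda^2/d^2) = O(m^2q^4/d^2)$. Multiplying by $n$ gives $O(nm^2q^4/d^2) = O\big((m\sqrt{n}q^2/d)^2\big) = o(1)$, again by $d\gg \boundwedges$. Hence $\Expected{\exp(\lambda a^{(1)}a^{(2)})}^n \le \exp(o(1)) = 1+o(1)$, and taking the square root preserves this.

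The step I expect to require the most care is Step 2's moment bound $\Expected{|a|^j}\le (Cj/d)^{j/2}$ uniformly in $j$. It must combine the $O(1/\sqrt{d})$ offset between $\Phi(\tau)$ and $p$ with the norm fluctuation, and handle the non-Lipschitz region where $\|\mathbf{x}_u\|$ is far from $\sqrt{d}$. The rest is bookkeeping.
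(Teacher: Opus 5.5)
Your proposal is correct and follows essentially the paper's argument. You drop the conditioning on $\Seventshort$ by non-negativity, Taylor-expand, and reduce to unconditional star moments $\mathbb{E}[a^j]=\mathbb{E}[\SW{K_{1,j}}]\le (Cj/d)^{j/2}$. This is the paper's star claim, which you reprove via Lipschitz continuity of $s\mapsto\Phi(\tau/s)$ and norm concentration rather than via the Gaussian-variance lemma. You then sum geometrically using $d\gg\boundwedges$.

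Factorizing over $u\in R$ before expanding is a cleaner version of the paper's multi-index bookkeeping. Your explicit use of $\mathbb{E}[a]=0$ to kill the $j=1$ term is genuinely needed: that term would otherwise be of order $nmq^2/d$, which the hypotheses do not make $o(1)$. The paper's final displayed sum includes $\ell=1$ without comment.
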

\begin{proof}
    We start by using non-negativity of the integrand to remove the conditioning on $S$ at the cost of a small factor.
    \begin{align*}
        &\Expectedsubb{ \latentstuff }{ \exp\left( 2mq^2 \sum_{u\in R} \frac{ \Expected{ \SWnum{K_{1,\{u\}}}{1} } \Expected{ \SWnum{K_{1,\{u\}}}{2} }}{p(1-p)} \right) } \\ 
        &\hspace{4cm} \le \Pr{S}^{-2} \Expectedsubb{ \latentstuffind }{ \exp\left( 2mq^2 \sum_{u\in R} \frac{ \Expected{ \SWnum{K_{1,\{u\}}}{1} } \Expected{ \SWnum{K_{1,\{u\}}}{2} }}{p(1-p)} \right) }.
    \end{align*} Now, by a Taylor expansion,
    \begin{align*}
        T &\coloneqq \Expectedsubb{ \latentstuffind }{ \exp\left( 2mq^2 \sum_{u\in R} \frac{ \Expected{ \SWnum{K_{1,\{u\}}}{1} } \Expected{ \SWnum{K_{1,\{u\}}}{2} }}{p(1-p)} \right) }\\
        &\hspace{4cm}\le 1 + \sum_{k = 1}^{\infty} \frac{1}{k!} \left( \frac{2mq^2}{p(1-p)} \right)^k \sum_{ \substack{ j_1, \ldots, j_k \in \mathbb{N}_{\ge 0} \\ \sum_\ell j_\ell = k }} n^{\supp(j_1, \ldots, j_k)} \prod_{\ell = 1}^{k} \binom{k}{j_\ell} \ \Expected{ \SW{K_{1, j_\ell}} }^2 \\
        &\hspace{4cm}\le 1 + \sum_{k = 1}^{\infty} \sum_{ \substack{ j_1, \ldots, j_k \in \mathbb{N}_{\ge 0} \\ \sum_\ell j_\ell = k }} n^{k} \frac{k^k}{k!} \prod_{\ell = 1}^{k} \frac{1}{j_\ell!} \left( \frac{2mq^2}{p(1-p)} \right)^{j_\ell} \Expected{ \SW{K_{1, j_\ell}} }^2 \\ &\hspace{4cm}\le  1 + \sum_{k=1}^{\infty} \left( n \sum_{ \substack{ \ell = 1}}^\infty \frac{1}{\ell!} \left(\frac{Cmq^2}{p(1-p)}\right)^\ell  \Expected{ \SW{K_{1, \ell}} }^2 \right)^k,
    \end{align*} where $\supp(j_1, \ldots, j_k)$ denotes the number of non-zero $j_\ell$, where we used $ \frac{k^k}{k!} \le (2e )^k$ and where $C > 0$ is a constant. Note that the terms $\Expected{ \SWnum{K_{1,\{u\}}}{1} }$  $\Expected{ \SWnum{K_{1,\{u\}}}{2} }$ in the previous expression are expectations \emph{conditional} on $\xrnum{1}, \xrnum{2}$, respectively, while $\Expected{ \SW{K_{1, j_k}}}$ above is an unconditional expectation (we omitted an explicit conditioning here for the sake of notational simplicity). Now, it suffices to show that the inner sum above is $o(1)$. To this end, we need the following bound on $\Expected{ \SW{K_{1, j_\ell}}}$.
    \begin{claim}\label{clm:stars}
        There is a constant $C> 0$ such that for any $\ell \ge 2$, 
        $
            \Expected{ \SW{K_{1, \ell}}} \le \left(\frac{C\ell}{d}\right)^{\ell/2 }.
        $
    \end{claim}
    \begin{proof}
        The proof is deferred to \Cref{sec:deferredproofs}.
    \end{proof}
    \noindent With this, we can bound
    \begin{align*}
        n \sum_{ \substack{ \ell = 1}}^\infty \frac{1}{\ell!} \left(\frac{Cmq^2}{p(1-p)}\right)^\ell  \Expected{ \SW{K_{1, \ell}} }^2 &\le n \sum_{ \substack{ \ell = 1}}^\infty \frac{1}{\ell!} \left(\frac{Cmq^2}{p(1-p)}\right)^\ell  \left(\frac{C\ell}{d}\right)^{\ell} \le n \sum_{ \substack{ \ell = 1}}^\infty \left(\frac{2C^2e}{p(1-p)} \frac{mq^2}{d}\right)^\ell = o(1),
    \end{align*} since $\ell! \ge \left( \frac{\ell}{2e} \right)^\ell$ and $\d \gg \boundwedges$. This implies 
    $
    T \le 1 + o(1),
    $ as desired.
\end{proof}

\subsection{The case of $p= 1/2$}\label{sec:ponehalfcase}


To prove the stronger bounds for $p = 1/2$, the methods presented so far would be sufficient, however, they would need some strengthening that would lead to quite some increase in technical complexity. However, it turns out that there is a much simpler and more direct proof, which crucially exploits symmetries arising only in the $p= 1/2$ case.

Specifically, going back to \Cref{eq:signedweighthardness}, instead of switching to an exponential over the sum of conditional signed weights of stars, we expand the expectation into a single sum over squared, conditional signed weights over all subgraphs of $K_{n,m}$.
\newcommand{\Expectedbig}[1]{\mathbb{E}\big[ #1 \big]}
\begin{align*}
    1 + \dchisq{\mathcal{L}(\mu \mid \Seventshort)}{\nu} &= \Expectedsubb{\latentstuff}{ \left( 1 + \sum_{\emptyset \neq \alpha \subseteq R} q^{2|\alpha|} \frac{\Expected{ \SWnum{K_{1,\alpha}}{1} } \Expected{ \SWnum{K_{1,\alpha}}{2} }}{(p(1-p))^{|\alpha|}}\right)^{m}  } \\
    &\hspace{0cm}= 1 + \sum_{\emptyset \neq \alpha \subseteq K_{n, m} } \left( \frac{q^2}{p(1-p)} \right)^{|\alpha|} \Big( \mathbb{E}_{\xl}\Expectedsub{\xr \sim S}{\SW{\alpha}}  \Big)^2.
\end{align*}
Above, the expression $\mathbb{E}_{\xl}\Expectedsub{\xr \sim S}{\SW{\alpha}}$ refers to the expected signed weight of $\alpha$ such that the left sided latent vectors are drawn unconditionally, while the right sided ones are drawn conditional on $S$. What is very important to take into account when summing over all $\alpha$ is that we need to argue that $\mathbb{E}_{\xl}\Expectedsub{\xr \sim S}{\SW{\alpha}} = 0$ if $\alpha$ has no leaves (i.e. vertices of degree one). This is not the case if $p \neq 1/2$, but due to the fact that $\tau = 0$ if $p = 1/2$ and the spherical symmetries of the Gaussian distribution, it is not hard to see that it is the case here. This property even holds when conditioning on $S$ even though this breaks the independence of the latent vectors in $R$. The reason is that $S$ is invariant under re-randomizing the latent vectors in a symmetry-preserving way, which enables us to define a simple noise operator that ``zeroes-out'' expected signed weights for graphs with leaves, even conditional on $S$. 
\begin{observation}[Noise operator in the conditional probability space]
    Define the noise operator $\mathbf{T}$ that acts on functions $f : \mathbb{R}^{n\times d} \rightarrow \mathbb{R}$ as $\mathbf{T}f(\mathbf{x}_1, \ldots, \mathbf{x}_n) = \Expectedsubb{\xi_1, \ldots, \xi_n \sim \{\pm 1\}}{ f(\xi_1\mathbf{x}_1, \ldots, \xi_n\mathbf{x}_n) }$ where the $\xi_i$ are i.i.d. Rademacher. Then, for any $f$, 
    $$
        \Expectedsubb{\xr \sim S}{ \mathbf{T} f(\xr) } = \Expectedsubb{\xr \sim S}{ f(\xr) },
    $$
    i.e. the law of $\ \xr$ conditional on $S$ is invariant under applying the noise operator $\mathbf{T}$. In particular, this implies that for $p = 1/2$ and any $\alpha \subseteq K_{n, m}$ such that $\alpha$ contains a vertex of degree one, we have $\Expectedsub{\xr \sim S}{\SW{\alpha}} = 0$.
\end{observation}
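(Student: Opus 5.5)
The proof splits into two parts: the invariance of the conditional law of $\xr$ under sign flips, and the vanishing of signed weights for graphs with a leaf when $p=1/2$.

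\textbf{Invariance of the conditional law.} For a fixed sign vector $\xi\in\{\pm1\}^n$, let $D_\xi$ denote the map $(\mathbf{x}_1,\ldots,\mathbf{x}_n)\mapsto(\xi_1\mathbf{x}_1,\ldots,\xi_n\mathbf{x}_n)$. The unconditional law $\mathcal{N}(0,\mathbf{I}_d)^{\otimes n}$ is invariant under $D_\xi$, because each $\mathbf{x}_u$ is symmetric and the vectors are independent. The event $\Seventshort$ is also invariant under $D_\xi$. Indeed, $\langle \xi_u\mathbf{x}_u,\xi_v\mathbf{x}_v\rangle=\xi_u\xi_v\langle\mathbf{x}_u,\mathbf{x}_v\rangle$. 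For $u\neq v$ this changes only the sign, and the defining constraint of $\Seventshort$ involves only the absolute value, since $\mathbf{I}_{u,v}=0$. For $u=v$ the product $\xi_u^2=1$ leaves the inner product unchanged.

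Consequently, $\Expectedsub{\xr\sim S}{f(D_\xi\xr)}=\Pr{S}^{-1}\mathbb{E}[\mathds{1}(\xr\in S)f(D_\xi\xr)]=\Pr{S}^{-1}\mathbb{E}[\mathds{1}(D_\xi\xr\in S)f(D_\xi\xr)]=\Expectedsub{\xr\sim S}{f(\xr)}$. Averaging over the $2^n$ choices of $\xi$ (Fubini applies, the sum being finite) gives $\Expectedsub{\xr\sim S}{\mathbf{T}f}=\Expectedsub{\xr\sim S}{f}$.

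\textbf{Vanishing for graphs with a leaf.} Take $p=1/2$, so that $\tau=0$ and $\sigma(\langle\mathbf{x}_u,\mathbf{x}_v\rangle)-\tfrac12=\tfrac12\,\mathrm{sgn}$-type $\pm\tfrac12$. This holds almost surely, as the inner product is nonzero a.s.; the event where it vanishes is null both unconditionally and under the conditioning, since $\Pr{S}>0$. Flipping the sign of one endpoint of an edge therefore multiplies that edge's factor by $-1$.

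We consider two cases for the leaf $w$ of $\alpha$, with unique incident edge $\{w,w'\}$.

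\emph{Case 1: $w\in R$.} Apply the invariance from the first part to $f(\xr)=\mathbb{E}_{\xl}[\SW{\alpha}]$, where this is a function of $\xr$ after integrating out $\xl$. Flipping only $\mathbf{x}_w$ (take $\xi$ with $\xi_w=-1$ and all other entries $+1$) changes the sign of the single factor involving $w$ and leaves all other factors untouched. Hence $f(D_\xi\xr)=-f(\xr)$ a.s., and invariance forces $\Expectedsub{\xr\sim S}{f}=0$.

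\emph{Case 2: $w\in L$.} Here $\xl$ is unconditioned, so the argument is even simpler. Condition on $\xr$ and on all $\mathbf{x}_u$ with $u\in L\setminus\{w\}$. The map $\mathbf{x}_w\mapsto-\mathbf{x}_w$ preserves the law of $\mathbf{x}_w$ and negates $\SW{\alpha}$. The conditional expectation is therefore $0$, and so is the full expectation $\mathbb{E}_{\xl}\Expectedsub{\xr\sim S}{\SW{\alpha}}$.

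\textbf{Main point of care.} There is no real obstacle. The only step needing attention is verifying that $\Seventshort$ depends on off-diagonal inner products only through their absolute values, and that the measure-zero ties $\langle\mathbf{x}_u,\mathbf{x}_v\rangle=0$ remain irrelevant after conditioning, which holds because $\Pr{S}>0$.
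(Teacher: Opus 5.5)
Your proof is correct and follows the reasoning the paper sketches. Both the Gaussian law and $S_\rho$ are invariant under flipping the signs of individual latent vectors, because $S_\rho$ only constrains the norms and the absolute values of off-diagonal inner products. For $p=1/2$ we have $\tau=0$, so flipping the vector of a leaf negates exactly one factor of the signed weight. You also treat a leaf in $L$ separately, using the symmetry of the unconditioned $\mathbf{X}_{\mathrm{L}}$, which the paper leaves implicit. This is needed: sign-flipping the neighbour in $R$ flips every edge at that neighbour and need not give zero, so the vanishing holds only once the expectation over $\mathbf{X}_{\mathrm{L}}$ is included, as you do.
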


\noindent With the above and the bound from \Cref{prop:fourierexpressionthing}, we can now sum over all graphs with $a$ vertices on the left and $b$ vertices on the right that contain no leaves. To this end, we define $s = \max\{a,b\}$ and $r = \min\{a, b\}$, and we sum over all possible degrees $\delta_1, \ldots, \delta_r$ for the vertices on the larger side to obtain 
\begin{align*}
    &1 + \sum_{\emptyset \neq \alpha \subseteq K_{n, m} } \left( \frac{q^2}{p(1-p)} \right)^{|\alpha|} \Big( \mathbb{E}_{\xl}\Expectedsub{\xr \sim S}{\SW{\alpha}}  \Big)^2 \le 1 + \sum_{a=2}^n \sum_{b=2}^m \binom{n}{a} \binom{m}{b} \sum_{\substack{\delta_1, \ldots, \delta_{s}\\2 \le \delta_j \le r }} \prod_{j = 1}^s \binom{r}{\delta_j} \left( \frac{C \log(n)^{\frac{1}{2}} q^2 \delta_j }{\sqrt{d}} \right)^{ \delta_j}\\
    &\hspace{2cm}\le 1 + \sum_{a=2}^n \sum_{b=2}^m \binom{n}{a} \binom{m}{b} \sum_{\substack{\delta_1, \ldots, \delta_{s}\\2 \le \delta_j \le r }} \left( \frac{2Ce \log(n)^{\frac{1}{2}} q^2 r }{\sqrt{d}} \right)^{ \sum_j \delta_j} \le 1 + \sum_{a=2}^n \sum_{b=2}^m \binom{n}{a} \binom{m}{b} \left( \sum_{\substack{2 \le \delta \le r }} \left( \frac{2Ce \log(n)^{\frac{1}{2}} q^2 r }{\sqrt{d}} \right)^{ \delta} \right)^{s}.
\end{align*} 
Since $d \gg \bound$, the inner most sum is geometric and we obtain that there is a constant $C' \ge 0$ such that 
\begin{align*}
    &1 + \sum_{\emptyset \neq \alpha \subseteq K_{n, m} }  \left( \frac{q^2}{p(1-p)} \right)^{|\alpha|} \Big( \mathbb{E}_{\xl}\Expectedsub{\xr \sim S}{\SW{\alpha}}  \Big)^2 \le 1 + \sum_{a=2}^n \sum_{b=2}^m \left( \frac{2en}{a} \right)^a \left( \frac{2em}{b} \right)^b \left( \frac{C' \log(n)^{1/2} q^2 r }{\sqrt{d}} \right)^{ 2s} \\
    &\hspace{3cm}\le 1 + \sum_{a=2}^n \left( \frac{2en}{a} \right)^a \sum_{b=a}^m  \left( \frac{2C'^2e \log(n) q^4 m a }{d} \right)^{ b} + \sum_{b=2}^m \left( \frac{2em}{b} \right)^b \sum_{a=b}^n  \left( \frac{2C'^2e \log(n) q^4 n b }{d} \right)^{ a},
\end{align*} where the first sum covers the case of $b  \ge a$ and the second sum covers the case $a \ge b$ (note further how we bounded $r^2/b \le a$ and $r^2/a \le b$ ). Now, again because $d \gg \bound$, the inner sums are geometric, so there is a constant $C'' > 0$ such that 
\begin{align*}
    &1 + \sum_{\emptyset \neq \alpha \subseteq K_{n, m} }  \left( \frac{q^2}{p(1-p)} \right)^{|\alpha|} \Big( \mathbb{E}_{\xl}\Expectedsub{\xr \sim S}{\SW{\alpha}}  \Big)^2 \le  1 + \sum_{a=2}^n \left( \frac{C'' \log(n) q^4 m n }{d} \right)^{ a} + \sum_{b=2}^m  \left( \frac{C'' \log(n) q^4 m n }{d} \right)^{ b} = 1 + o(1),
\end{align*} as desired.

\subsection{Known vs. Unknown Masks: Proof of \Cref{thm:hardnessunknown}} 
At this point it becomes important to further specify the latent randomness $\phi$ appearing in the previous equation.
Our proof has the convenient property that it can handle both the case of a known mask $\mathbf{M}$, and the case where $\mathbf{M}$ is hidden, and only the matrix $W \sim \matnoiseedge$ is part of the input. In the former case, in order to show information-theoretic hardness, it suffices to show that $$
    \Expectedsub{\mathbf{M}}{ \dtv{\mathcal{L}(\mu \mid \mathbf{M}, \Seventshort)}{\nu} } = o(1),
$$ where $\mathcal{L}(\mu \mid \mathbf{M}, S)$ is the distribution of $\matnoiseedge$ conditional on a concrete mask $\mathbf{M}$ and the good event $S$. If the mask is unknown, we instead wish to show that
$$
    \dtv{\mu } {\nu} =  \dtv{\Expectedsub{\mathbf{M}}{\mathcal{L}(\mu \mid \mathbf{M}, \Seventshort)}}{\nu}  = o(1),
$$ i.e. that the overall distribution over matrices, averaged over all masks, converges to $\nu$ in total variation. In terms of applying the second moment method, this difference amounts to whether or not the random mask $M$ is part of the replicated latent information $\phi^{(1)}, \phi^{(2)}$ or not. This determines whether or not we average twice over our mask or only once and therefore essentially determines whether we multiply each term by $q$ or by $q^2$. Specifically, going back to \Cref{eq:signedweighthardness}, if the mask is known, we apply the expectation over $\mathbf{M}$ only once and ultimately obtain

\begin{align*}
    1 + \Expectedsub{\mathbf{M}}{ \dtv{\mathcal{L}(\mu \mid \mathbf{M}, S)}{\nu} } &\le \Expectedsubb{\phi^{(1)}, \phi^{(2)} \sim \varrho \otimes \varrho}{ \Expectedsubb{\xi \sim \nu}{ \frac{\mathrm{d} \mu_{\phi^{(1)}}}{\mathrm{d} \nu}(\xi) \frac{\mathrm{d} \mu_{\phi^{(2)}}}{\mathrm{d} \nu}(\xi) }  } \nonumber \\ 
    &= \Expectedsubb{\latentstuff}{ \left( 1 + \sum_{\emptyset \neq \alpha \subseteq R} \frac{1}{(p(1-p))^{|\alpha|}} \Expectedsub{\mathbf{x}_1^{(1)}, \mathbf{x}_1^{(2)}}{ \prod_{v \in \alpha} \Expectedsubnop{\mathbf{M}_{1,v}}{(\sigmanumuv{1}{1}{v} - p) (\sigmanumuv{2}{1}{v} - p)}  } \right)^{m} } \nonumber \\
    &=\Expectedsubb{\latentstuff}{ \left( 1 + \sum_{\emptyset \neq \alpha \subseteq R} q^{|\alpha| } \ \frac{\Expected{ \SWnum{K_{1,\alpha}}{1} } \Expected{ \SWnum{K_{1,\alpha}}{2} }}{(p(1-p))^{|\alpha|}}\right)^{m}  },
\end{align*} since 
$$
    \Expectedsubnop{\mathbf{M}_{1,v}}{(\sigmanumuv{1}{1}{v} - p) (\sigmanumuv{2}{1}{v} - p)} = q (\sigma(\langle \mathbf{x}_1^{(1)}, \mathbf{x}_v^{(1)} \rangle) - p) (\sigma(\langle \mathbf{x}_1^{(2)}, \mathbf{x}_v^{(2)} \rangle) - p).
$$
Given this, the entire rest of the proof proceeds analogously.

\section{Proof of \Cref{prop:fourierexpressionthing}}\label{sec:proofofprop}

\newcommand{\dassump}{\sqrt{d} \gg |\alpha| \log(n)}
\renewcommand{\dassump}{|\alpha|^2 \rho^2 \le \varepsilon d}
To derive \Cref{prop:fourierexpressionthing}, let us fix a set $\alpha \subseteq R$ which, for simplicity, we assume to consist of the natural numbers from $1$ to $\ell$, i.e., $\alpha = [\ell]$. To integrate $\SW{K_{1,\alpha}}$, we consider a random vector $\mathbf{z}_\beta \in \mathbb{R}^{|\alpha|}$ for every $\beta \subseteq \alpha$ with
$$
    \mathbf{z}_\beta \sim \mathcal{N}(0, \boldsymbol{\Sigma}_\beta) \text{ with }
    (\boldsymbol{\Sigma}_\beta)_{j_1, j_2} = \begin{cases}
        \frac{1}{d}\langle \mathbf{x}_{j_1}, \mathbf{x}_{j_2}\rangle & \text{if } j_1, j_2 \in \beta \\
        \sighat^2 & \text{if } j_1 = j_2 \text{ and } j_1, j_2 \notin \beta \\
        0 & \text{otherwise},
    \end{cases}
$$
Here, the quantity $\sighat$ is defined like in \Cref{lem:sigbound}, i.e., such that 
$$
    p = \Prnop{ \tfrac{1}{\sqrt{d}}\langle \mathbf{x}, \mathbf{y}\rangle \le \tau } = \Pr{Z \le \tau}
$$ for $\mathbf{x}, \mathbf{y}$ being i.i.d. random vectors sampled from $\mathcal{N}(0, \mathbf{I}_d)$ and $Z \sim \mathcal{N}(0, \sighat^2)$. 
Recall the definition of our good event $\Sevent$ given by 
$$
\Sevent \coloneqq \Seventdef.
$$

Throughout this section, we assume that $\dassump$ for some sufficiently small constant $\varepsilon > 0$. The reason we can do this is that for $\dassump$, the bound we wish to prove becomes larger than 1 (at least when setting the constant $C$ large enough), while expected signed weights can trivially be bounded by 1 in absolute value. Our assumption on $d$ together with the above definition of $S$ yield some desirable spectral properties of $\boldsymbol{\Sigma}$, that will become important later. We capture them in the following lemma.
\newcommand{\opnorm}[1]{\left\| #1 \right\|_{\mathrm{op}}}
\begin{lemma}\label{lem:spectralbound}
    There is a constant $C>0$ such that for $\dassump$ and latent vectors in $\Sevent$ it holds that for all $\beta \subseteq \alpha$ and sufficiently large $n$, 
    $$
        \opnorm{\boldsymbol{\Sigma}_\beta - \mathbf{I} } \le \delta(\varepsilon), \opnorm{\boldsymbol{\Sigma}_\beta - \sighat^2\mathbf{I} } \le \frac{C|\alpha|\rho}{\sqrt{d}}.
    $$ where $\delta(\varepsilon)$ is a function that tends to zero as $\varepsilon \rightarrow 0$. 
\end{lemma}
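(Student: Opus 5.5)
We bound the operator norm by the maximum absolute row sum (equivalently, Gershgorin's theorem), since $\boldsymbol{\Sigma}_\beta - \mathbf{I}$ and $\boldsymbol{\Sigma}_\beta - \sighat^2\mathbf{I}$ are symmetric $|\alpha|\times|\alpha|$ matrices. For a symmetric matrix $\mathbf{A}$ we have $\opnorm{\mathbf{A}} \le \max_{j}\sum_{j'} |\mathbf{A}_{j,j'}|$, so it suffices to bound each entry and multiply by $|\alpha|$.

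First, we bound the entries of $\boldsymbol{\Sigma}_\beta - \sighat^2\mathbf{I}$.
\begin{enumerate}
\item[(a)] Off-diagonal entries are either $0$ or, for $j_1 \ne j_2 \in \beta$, equal to $\tfrac1d\langle \mathbf{x}_{j_1},\mathbf{x}_{j_2}\rangle$. On $\Sevent$ these are at most $\rho/\sqrt d$ in absolute value.
\item[(b)] Diagonal entries with $j \notin \beta$ equal $\sighat^2 - \sighat^2 = 0$.
\item[(c)] Diagonal entries with $j \in \beta$ equal $\tfrac1d\|\mathbf{x}_j\|^2 - \sighat^2$. By the triangle inequality, $\Sevent$, and \Cref{lem:sigbound}, their absolute value is at most $\tfrac1d\|\mathbf{x}_j\|^2 - 1$ plus $|1-\sighat^2|$, hence at most $\rho/\sqrt d + C_0/\sqrt d$.
\end{enumerate}
Since $\rho \ge 1$ for large $n$ (as $\rho = C\sqrt{\log n}$), each entry is at most $C_1\rho/\sqrt d$. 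Each row has $|\alpha|$ entries, so the row sums are at most $C_1|\alpha|\rho/\sqrt d$. This gives the second bound with $C = C_1$.

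For the first bound we write $\boldsymbol{\Sigma}_\beta - \mathbf{I} = (\boldsymbol{\Sigma}_\beta - \sighat^2\mathbf{I}) + (\sighat^2 - 1)\mathbf{I}$. The triangle inequality for the operator norm then gives
\[
\opnorm{\boldsymbol{\Sigma}_\beta - \mathbf{I}} \le \frac{C_1|\alpha|\rho}{\sqrt d} + \frac{C_0}{\sqrt d}.
\]
Under the standing assumption $|\alpha|^2\rho^2 \le \varepsilon d$, the first term is at most $C_1\sqrt{\varepsilon}$. The second term tends to zero as $n\to\infty$ because $d \gg \log(n)^3 \to \infty$. So for sufficiently large $n$ the bound is at most $\delta(\varepsilon) \coloneqq 2C_1\sqrt\varepsilon$, which tends to $0$ as $\varepsilon \to 0$.

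There is no substantive obstacle here. The only points needing care are that the diagonal entries for $j \in \beta$ are compared with $\sighat^2$ rather than $1$, which costs the extra $O(1/\sqrt d)$ term from \Cref{lem:sigbound}, and that $\rho$ must be bounded below by a constant so that this term can be absorbed into $C\rho/\sqrt d$.
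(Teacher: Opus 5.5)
Your proof is correct and is essentially the paper's argument: the same decomposition $\boldsymbol{\Sigma}_\beta - \mathbf{I} = (\boldsymbol{\Sigma}_\beta - \widehat{\sigma}^2\mathbf{I}) + (\widehat{\sigma}^2 - 1)\mathbf{I}$, an entrywise bound from $S_\rho$ and Lemma~\ref{lem:sigbound}, and the dimension-times-max-entry bound on the operator norm. Your explicit remark that $\rho$ is bounded below by a constant, which is needed to absorb the $O(1/\sqrt{d})$ diagonal correction into $C\rho/\sqrt{d}$, is a detail the paper leaves implicit.
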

\begin{proof}
    Note that we can decompose $\boldsymbol{\Sigma}_\beta - \mathbf{I} = \widehat{\boldsymbol{\Sigma}_\beta} + (\sighat^2-1) \mathbf{I}$ with 
    $$
        (\widehat{\boldsymbol{\Sigma}_\beta})_{i,j} = \begin{cases}
        \frac{1}{d}\langle \mathbf{x}_{i}, \mathbf{x}_{j}\rangle - \sighat^2 \delta_{ij} & \text{if } i, j \in \beta \\
        0 & \text{otherwise},
    \end{cases}
    $$ where $\delta_{ij}$ is the Kronecker delta. Hence $ \opnorm{\boldsymbol{\Sigma}_\beta - \mathbf{I} } \le \opnorm{\widehat{\boldsymbol{\Sigma}_\beta}} + (\sighat^2-1)$. By \Cref{lem:sigbound}, we get that $\sighat^2-1 = o(1)$, so it only remains to bound $\opnorm{\widehat{\boldsymbol{\Sigma}_\beta}}$. To this end, we use the occurrence of $S$ to get 
    $$
        \opnorm{\widehat{\boldsymbol{\Sigma}_\beta}} \le |\beta| \|\widehat{\boldsymbol{\Sigma}_\beta}\|_\infty \le |\beta| \left( \frac{\rho}{\sqrt{d}}  + |\sighat^2 - 1| \right),
    $$ which tends to zero as $\varepsilon \rightarrow 0$ due to $|\beta| \le |\alpha|$ and our assumption $\dassump$. For the matrix $\boldsymbol{\Sigma}_\beta - \sighat^2\mathbf{I}$, we can directly use the definition of $\Seventshort$ and \Cref{lem:sigbound} to get that that there is a constant $C > 0$ such that $\|\boldsymbol{\Sigma}_\beta - \sighat^2\mathbf{I}\|_\infty \le C\rho/\sqrt{d}$, and the second part of the statement follows as well.
\end{proof}

\subsection{A Fourier-theoretic expression for expected signed weights}
Using this definition, we get the expression 
\begin{align*}
    &\mathbb{E}_{\mathbf{x}} \big[ \SW{K_{1,\alpha}} \mid (\mathbf{x}_u)_{u \in \alpha} \big]= \mathbb{E}_{\mathbf{x}} \Bigg[ {\prod_{\ell \in \alpha} ( \sigma(\langle \mathbf{x}_\ell, \mathbf{x} \rangle) - p ) } \Bigg]  = \sum_{\beta \subseteq \alpha} (-1)^{|\alpha \setminus \beta|} p^{|\alpha \setminus \beta|} \mathbb{E}_{\mathbf{x}} \Bigg[  {\prod_{\ell \in \beta} \sigma(\langle \mathbf{x}_\ell, \mathbf{x} \rangle)  } \Bigg].
\end{align*}
Now by definition of $\mathbf{z}_\beta$, we have $p^{|\alpha \setminus \beta|} \ \Expectedsub{\mathbf{x}}{{\prod}_{\ell \in \beta} \ \sigma(\langle \mathbf{x}_\ell, \mathbf{x} \rangle) } = \Pr{ \mathlarger\cap_{e \in \alpha} \{ \mathbf{z}_\beta(e)  \le \tau \} }$, so we get 
\begin{align*}
    \mathbb{E}_{\mathbf{x}} \big[ \SW{K_{1,\alpha}} \mid (\mathbf{x}_u)_{u \in \alpha} \big]= \sum_{\beta \subseteq \alpha} (-1)^{|\alpha \setminus \beta|} \ \Pr{ \mathlarger\cap_{e \in \alpha} \{ \mathbf{z}_\beta(e)  \le \tau \} }.
\end{align*}
To find a more convenient expression for the last term, we use Fourier inversion to get the following.
\begin{lemma}\label{lem:swfourier2}
    Whenever $\boldsymbol{\Sigma}_\beta$ is positive definite for all $\beta \subseteq \alpha$, we have
    \begin{align*}
        \Expectedsub{\mathbf{x}}{ \SW{K_{1,\alpha}} \mid (\mathbf{x}_u)_{u \in \alpha} } &= \lim_{h \rightarrow \infty} \frac{-1}{(2\pi)^{|\alpha|}} \int_{[-\infty, \infty]^{|\alpha|}} e^{-\trp{i\mathbf{t}}{\mathbf{x}}_{\tau}} \left( \sum_{\beta \subseteq \alpha} (-1)^{|\alpha \setminus \beta|}  \hspace{.1cm}\CF{\beta} \right) \prod_{e \in \mathbf{t}} \frac{1 - e^{ih\mathbf{t}(e)}}{i\mathbf{t}(e)} \d \mathbf{t},
    \end{align*}
    where $\CF{\beta} = \Expected{\exp(i\trp{\mathbf{t}}{\mathbf{z}_\beta})}$ is the characteristic function of $\mathbf{z}_\beta$, and $\mathbf{x}_{\tau}$ is the vector that contains $\tau $ everywhere. 
\end{lemma}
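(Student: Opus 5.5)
We start from the identity already derived,
\[
\Expectedsub{\mathbf{x}}{ \SW{K_{1,\alpha}} \mid (\mathbf{x}_u)_{u \in \alpha} } = \sum_{\beta \subseteq \alpha} (-1)^{|\alpha \setminus \beta|} \Pr{ \cap_{e \in \alpha} \{ \mathbf{z}_\beta(e) \le \tau \} },
\]
and express each orthant probability via Fourier inversion. Since $\boldsymbol{\Sigma}_\beta$ is positive definite, $\mathbf{z}_\beta$ has the Gaussian density $f_\beta$. Its characteristic function $\CF{\beta} = \exp(-\frac12 \mathbf{t}^\top \boldsymbol{\Sigma}_\beta \mathbf{t})$ is integrable, so \Cref{thm:inversion} applies. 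We then write the probability as an integral of $f_\beta$ over a box, truncated from below:
\[
\Pr{\cap_e\{\mathbf{z}_\beta(e)\le \tau\}} = \lim_{h\to\infty} \int_{\prod_e [\tau - h, \tau]} f_\beta(\mathbf{y})\,\d \mathbf{y}.
\]
The limit is justified by monotone convergence.

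Next, we insert the inversion formula for $f_\beta$ and exchange the $\mathbf{y}$-integral (over a bounded box) with the $\mathbf{t}$-integral by Fubini. This is legitimate because $|e^{-i\mathbf{t}^\top\mathbf{y}}\CF{\beta}|$ is integrable over the product of the box and $\mathbb{R}^{|\alpha|}$. The $\mathbf{y}$-integral then factorizes over coordinates, and each factor is an elementary integral:
\[
\int_{\tau-h}^{\tau} e^{-i t y}\,\d y = e^{-it\tau}\,\frac{e^{ith}-1}{it}.
\]
This holds for $t \neq 0$; the point $t=0$ has measure zero and the integrand extends continuously there. Collecting the phases $e^{-it(e)\tau}$ gives $e^{-i\mathbf{t}^\top \mathbf{x}_\tau}$. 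Each factor $\frac{e^{ith}-1}{it}$ equals $-\frac{1-e^{ith}}{it}$, so the product over $e$ yields $(-1)^{|\alpha|}\prod_e \frac{1-e^{ih\mathbf{t}(e)}}{i\mathbf{t}(e)}$. We need to check that the stated prefactor $-1$ matches this sign convention; a possible sign adjustment is harmless up to the orientation convention of the product, and I would verify it carefully.

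Finally, we sum over $\beta$ with signs $(-1)^{|\alpha\setminus\beta|}$. This sum is finite, so it can be pulled inside both the limit and the integral. Each individual integrand is absolutely integrable for fixed $h$, because $|\frac{1-e^{iht}}{it}|\le \min(h,2/|t|)$ is bounded and $\CF{\beta}$ decays like a Gaussian. Hence linearity of the integral suffices, and no conditional convergence issue arises before the limit in $h$ is taken.

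The main obstacle is the interchange in the Fubini step, together with keeping the limit in $h$ outside the integral. Indeed, $\lim_h$ cannot be moved inside, because $\frac{1-e^{iht}}{it}$ has no integrable limit. The plan therefore keeps $h$ finite throughout, using Gaussian decay of $\CF{\beta}$ for every finite $h$, and takes the limit only at the very end, where it is justified by monotone convergence of the box probabilities.
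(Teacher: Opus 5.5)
Your argument is correct and is essentially the paper's. The paper convolves $\mathbf{z}_\beta$ with $\mathrm{Unif}([0,h])^{\otimes|\alpha|}$ and applies \Cref{thm:inversion} to the convolved density at $\mathbf{x}_\tau$, which is the same computation as your Fubini step over the box $\prod_{e}[\tau-h,\tau]$; it likewise takes $h\to\infty$ only at the end and applies the finite alternating sum over $\beta$ afterwards. On the sign you left open, your computation is right and no orientation convention rescues the statement: the correct prefactor is $(-1)^{|\alpha|}$ (equivalently $+1$ with $\prod_e \frac{e^{ih\mathbf{t}(e)}-1}{i\mathbf{t}(e)}$), so the stated $-1$ holds only for odd $|\alpha|$, and the paper's ``multiplying by $-1$'' contains the same slip. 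This affects only a global sign depending on $|\alpha|$, and none of the absolute-value bounds the paper later derives from the lemma.
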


\begin{remark}
    Instead of writing the limit over $h$ explicitly, we will mostly write 
    \begin{align*}
        \Expectedsub{\mathbf{x}}{ \SW{K_{1,\alpha}} \mid (\mathbf{x}_u)_{u \in \alpha} } &= \frac{-1}{(2\pi)^{|\alpha|}} \int_{[-\infty, \infty]^{|\alpha|}} e^{-\trp{i\mathbf{t}}{\mathbf{x}}_{\tau}} \left( \sum_{\beta \subseteq \alpha} (-1)^{|\alpha \setminus \beta|}  \hspace{.1cm}\CF{\beta} \right) \prod_{e \in \mathbf{t}} \frac{\onebutnotactuallyone}{i\mathbf{t}(e)} \d \mathbf{t}
    \end{align*}
    where we think of $\onebutnotactuallyone$ as the limit of $1 - e^{ih\mathbf{t}(e)}$ as $h \rightarrow \infty$. The quantity $\onebutnotactuallyone$ usually behaves like one in most of our proofs, but it is important to keep in mind that it actually refers to a limiting object which in particular has the property that $| \prod_{e \in \mathbf{t}} \frac{1 - e^{ih\mathbf{t}(e)}}{i\mathbf{t}(e)} | \le 1$ for all $\mathbf{t}$ and all $h$, which ensures well-behaved integrals. 
\end{remark}

\begin{proof}[Proof of \Cref{lem:swfourier2}]
    We define  $\T \coloneqq [-\infty, \infty]^{|\alpha|}$ and 
    convolve the distribution of $\mathbf{z}_{\beta}$ with the uniform distribution $\mathsf{Unif}([0, h])^{\otimes |\alpha|}$ which has the CF
    \begin{align*}
        \varphi(\mathbf{t}) = h^{-|\alpha|} \prod_{e \in \alpha} \frac{e^{i\mathbf{t}(e)h} - 1}{i \mathbf{t}(e)}.
    \end{align*} Then, applying the inversion theorem \Cref{thm:inversion} yields that for any $\mathbf{x}$,
    \begin{align*}
        \frac{1}{h^{|\alpha|}} \Pr{ \mathlarger{\cap}_{e \in \alpha} \{ \mathbf{x}(e) - h \le \mathbf{z}_\beta(e) \le \mathbf{x}(e) \} } &= \frac{1}{h^{|\alpha|}} \frac{1}{(2\pi)^{|\alpha|}}  \int_{\mathcal{T}}  \CF{\beta} \ e^{-\trp{i\mathbf{t}}{\mathbf{x}}} \prod_{e \in \alpha} \frac{e^{i\mathbf{t}(e)h} - 1}{i \mathbf{t}(e)} \d \mathbf{t}.
    \end{align*}
    Note that the above integral exists since $|\CF{\beta}|$ is integrable due to our assumption that $\boldsymbol{\Sigma}_\beta$ is positive definite. 
    Taking $h \rightarrow \infty$ and setting $\mathbf{x}(e) = \tau$ for all $e \in \alpha$ now yields that 
    \begin{align*}
        \Pr{ \mathlarger\cap_{e \in \alpha} \{ \mathbf{z}_\beta(e)  \le \mathbf{x}(e) \} } = \lim_{h \rightarrow \infty} \frac{1}{(2\pi)^{|\alpha|}} \int_{\mathcal{T}}  \CF{\beta} \ e^{-\trp{i\mathbf{t}}{\mathbf{x}_{\tau}}} \prod_{e \in \alpha} \frac{e^{i\mathbf{t}(e)h} - 1}{i \mathbf{t}(e)} \d \mathbf{ t}.
    \end{align*} Multiplying by $-1$ and applying the alternating sum over $\beta \subseteq \alpha$, we get that 
    \begin{align*}
        \Expectedsub{\mathbf{x}}{ \SW{K_{1,\alpha}} \mid (\mathbf{x}_u)_{u \in \alpha} } = \frac{-1}{(2\pi)^{|\alpha|}} \int_{\mathcal{T}} e^{-\trp{i\mathbf{t}}{\mathbf{x}_\tau}}   \left( \sum_{\beta \subseteq \alpha} (-1)^{|\alpha \setminus \beta|} \CF{\beta} \right) \prod_{e \in \alpha} \frac{\onebutnotactuallyone}{i \mathbf{t}(e)} \d \mathbf{ t},
    \end{align*}
    as desired. Notice that the $1$ cancels out after applying the alternating sum over $\beta$.
\end{proof}
 
The rest of the proof is now concerned with squeezing something useful out of the expression from \Cref{lem:swfourier2}. The main idea is to apply a Taylor series to a part of $\CF{\beta}$ and to exploit cancellations occurring afterwards.

\subsection{Expanding the characteristic function and exploiting cancellations}
It is quite convenient for us that the characteristic functions $\CF{\beta}$ have a simple analytic expression given by 

\begin{align}\label{eq:expansioncf}
    \CF{\beta} = \exp\left(-\frac{1}{2}\mathbf{t}^\top \boldsymbol{\Sigma}_\beta \mathbf{t}\right) &= \exp\left(-\frac{\sighat^2}{2}\trp{\mathbf{t}}{\mathbf{t}} - \frac{1}{2}\mathbf{t}^\top \! (\boldsymbol{\Sigma}_\beta - \sighat^2\mathbf{I}) \ \mathbf{t}\right) \nonumber \\
    &= \exp \left(-\frac{\sighat^2}{2}\trp{\mathbf{t}}{\mathbf{t}}\right) \sum_{k=0}^\infty  \frac{(-1)^k}{2^k k!} (\mathbf{t}^\top \! (\boldsymbol{\Sigma}_\beta - \sighat^2\mathbf{I}) \ \mathbf{t})^k \nonumber \\
    &= \exp\left(-\frac{\sighat^2}{2}\trp{\mathbf{t}}{\mathbf{t}}\right) \sum_{k=0}^\infty  \frac{(-1)^k}{2^k k!} \sum_{r_1, \ldots r_k \in \alpha \times \alpha} \ \ \underbrace{ \prod_{j = 1}^k (\boldsymbol{\Sigma}_\beta - \sighat^2\mathbf{I})_{r_j(1), r_j(2)} \mathbf{t}(r_j(1))\mathbf{t}(r_j(2)) }_{\eqqcolon \phiterm{\beta}}
\end{align} where each $r_j$ is a 2-tuple of indices in $\alpha$. 
The crucial part in our analysis now lies in exploiting cancellations that occur once we apply the alternating sum from \Cref{lem:swfourier2} to $\CF{\beta}$. This will have the effect that all terms corresponding to $k < \lceil |\alpha| /2 \rceil$ are zero.
Concretely, given any $r_1, \ldots, r_k \in \alpha \times \alpha$, we recall that \begin{align*}
    \supp(r_1, \ldots, r_k) \coloneqq \{ e \in \alpha \mid r_j(1) = e \text{ or } r_j(2) = e \text{ for some } j \},
\end{align*} and we consider what happens when applying the alternating sum $\sum_{\beta \subseteq \alpha} (-1)^{|\alpha \setminus \beta|}$ to the innermost sum in \eqref{eq:expansioncf}. We define $\gamma \coloneqq \supp(r_1,\ldots,r_k)$ and  $\overline{\gamma} = \alpha \setminus \gamma$ to re-write 
\begin{align*}
     &\sum_{\beta \subseteq \alpha} (-1)^{|\alpha \setminus \beta|}  \phiterm{\beta} = \sum_{\beta_1 \subseteq \gamma} \ \ \sum_{\beta_2 \subseteq \overline{\gamma} } \ (-1)^{|\gamma \setminus \beta_1| + | \overline{\gamma} \setminus \beta_2|}  \phiterm{\beta_1 \cup \beta_2}.
\end{align*} 
Now, we can observe the following. 
\begin{lemma}
    Given any $r_1, \ldots, r_k \in \alpha \times \alpha$, set $\gamma \coloneqq \supp(r_1, \ldots, r_k)$. Then for any $\beta \subseteq \alpha$ and all $\mathbf{t}$, we have 
    \begin{align*}
         \phiterm{\beta} =  \phiterm{\beta \cap \gamma}.
    \end{align*}
\end{lemma}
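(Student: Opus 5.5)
We prove the identity $\phiterm{\beta} = \phiterm{\beta \cap \gamma}$ factor by factor, directly from the definition of the intermediate covariances $\boldsymbol{\Sigma}_\beta$. Recall that
$$
\phiterm{\beta} = \prod_{j=1}^k (\boldsymbol{\Sigma}_\beta - \sighat^2\mathbf{I})_{r_j(1), r_j(2)}\, \mathbf{t}(r_j(1))\mathbf{t}(r_j(2)).
$$
The factors $\mathbf{t}(r_j(1))\mathbf{t}(r_j(2))$ do not depend on $\beta$ at all. It therefore suffices to show that for every $j \in [k]$,
$$
(\boldsymbol{\Sigma}_\beta - \sighat^2\mathbf{I})_{r_j(1), r_j(2)} = (\boldsymbol{\Sigma}_{\beta\cap\gamma} - \sighat^2\mathbf{I})_{r_j(1), r_j(2)}.
$$

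Fix $j$ and write $a = r_j(1)$, $b = r_j(2)$. By the definition of $\gamma = \supp(r_1,\ldots,r_k)$, both $a$ and $b$ lie in $\gamma$. Hence $a \in \beta$ if and only if $a \in \beta \cap \gamma$, and likewise for $b$. The entry $(\boldsymbol{\Sigma}_\beta)_{a,b}$ is determined only by the indices $a,b$ and by whether $a$ and $b$ belong to $\beta$. There are three cases.
\begin{enumerate}
    \item[(i)] If $a,b \in \beta$, the entry is $\frac1d\langle \mathbf{x}_a,\mathbf{x}_b\rangle$.
    \item[(ii)] If $a=b \notin \beta$, the entry is $\sighat^2$.
    \item[(iii)] Otherwise the entry is $0$.
\end{enumerate}
Since the membership of $a$ and $b$ in $\beta$ agrees with their membership in $\beta\cap\gamma$, the same case applies to both matrices, so $(\boldsymbol{\Sigma}_\beta)_{a,b} = (\boldsymbol{\Sigma}_{\beta\cap\gamma})_{a,b}$. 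Subtracting the identical entry $\sighat^2\mathbf{I}_{a,b}$ preserves the equality. Taking the product over $j$ gives the claim for every $\mathbf{t}$.

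There is no real obstacle here. The only point requiring care is that case (ii) contributes $\sighat^2 - \sighat^2 = 0$ after subtracting $\sighat^2\mathbf{I}$, so diagonal entries outside $\beta$ vanish. The argument never needs this, however: it relies purely on the entries at positions $(a,b)$ with $a,b \in \gamma$ being insensitive to $\beta \setminus \gamma$.

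This lemma is what drives the cancellation stated afterwards. In the split
$$
\sum_{\beta_1\subseteq\gamma}\sum_{\beta_2\subseteq\overline\gamma}(-1)^{|\gamma\setminus\beta_1|+|\overline\gamma\setminus\beta_2|}\phiterm{\beta_1\cup\beta_2},
$$
the term $\phiterm{\beta_1\cup\beta_2}$ equals $\phiterm{\beta_1}$ and so does not depend on $\beta_2$. The inner alternating sum over $\beta_2 \subseteq \overline\gamma$ is therefore zero unless $\overline\gamma = \emptyset$, which yields the coverage constraint $\supp(r_1,\ldots,r_k) = \alpha$.
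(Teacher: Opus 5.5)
Your proof is correct and is essentially the paper's argument: the paper notes that the principal submatrices on $\gamma$ agree, $\boldsymbol{\Sigma}_\beta[\gamma] = \boldsymbol{\Sigma}_{\beta \cap \gamma}[\gamma]$, and your entrywise case check verifies exactly that. Since every index appearing in the product lies in $\gamma$, this suffices.
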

\begin{proof}
    Follows from the fact that $\boldsymbol{\Sigma}_\beta[\gamma] = \boldsymbol{\Sigma}_{\beta \cap \gamma}[\gamma]$.
\end{proof}
\noindent With the above, our sum simplifies to 
\begin{align*}
     \sum_{\beta \subseteq \alpha} (-1)^{|\alpha \setminus \beta|}  \phiterm{\beta} = \sum_{\beta_1 \subseteq \gamma} (-1)^{|\gamma \setminus \beta_1|}  \phiterm{\beta_1} \left( \sum_{\beta_2 \subseteq \overline{\gamma} } \ (-1)^{ | \overline{\gamma} \setminus \beta_2)|} \right).
\end{align*} Now, we observe that the number of positive and negative terms in the inner most sum is exactly the same, except when $\overline{\gamma} = \emptyset$. This is because it is equal the the difference in the number of odd and even sized subsets of $\overline{\gamma}$. In other words, when evaluating $\sum_{\beta \subseteq \alpha} (-1)^{|\alpha \setminus \beta|} \CF{\beta}$ and expanding $\CF{\beta}$ as in \eqref{eq:expansioncf}, then all terms for which $\supp(r_1, \ldots, r_k) \neq \alpha$ are zero.
Moreover, regarding the terms that do not cancel out (i.e. those for which $\supp(r_1, \ldots, r_k) = \alpha$), we can find some further simplifications enabled by the following observation.
\begin{lemma}
    Consider any $r_1, \ldots, r_k \in \alpha \times \alpha$ such that $\supp(r_1, \ldots, r_k) = \alpha$. Then, for any $\beta \subsetneq \alpha$ and all $\mathbf{t}$, we have 
    \begin{align*}
         \phiterm{\beta} = 0.
    \end{align*}
\end{lemma}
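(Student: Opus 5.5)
The claim is that every factor in $\Phi^{(r_1,\ldots,r_k)}_{\beta}(\mathbf{t})$ must be read off from $\boldsymbol{\Sigma}_\beta - \sighat^2\mathbf{I}$, and that some factor vanishes when $\beta \subsetneq \alpha$. Since $\beta \subsetneq \alpha$, I will fix an element $e \in \alpha \setminus \beta$. Because $\supp(r_1,\ldots,r_k) = \alpha$, some tuple $r_j$ has $r_j(1) = e$ or $r_j(2) = e$. The plan is to show that the corresponding matrix entry $(\boldsymbol{\Sigma}_\beta - \sighat^2\mathbf{I})_{r_j(1), r_j(2)}$ is zero. The product $\Phi^{(r_1,\ldots,r_k)}_{\beta}(\mathbf{t})$ contains this entry as a factor, so it then vanishes identically in $\mathbf{t}$.

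To check that the entry is zero, I will use the definition of $\boldsymbol{\Sigma}_\beta$ and split according to whether the tuple is diagonal. By symmetry, assume $r_j(1) = e$ and write $f = r_j(2)$.

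\textbf{Case $f \neq e$.} Since $e \notin \beta$, the pair $(e,f)$ does not have both indices in $\beta$, and it is not diagonal. The definition therefore puts us in the ``otherwise'' branch, so $(\boldsymbol{\Sigma}_\beta)_{e,f} = 0$. The identity contributes nothing off the diagonal, so the entry of $\boldsymbol{\Sigma}_\beta - \sighat^2\mathbf{I}$ is $0$.

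\textbf{Case $f = e$.} Since $e \notin \beta$, the definition gives $(\boldsymbol{\Sigma}_\beta)_{e,e} = \sighat^2$. Subtracting $\sighat^2\mathbf{I}$ then gives $0$.

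In both cases the factor vanishes, which proves the statement. There is no real obstacle here. The only point requiring care is that the diagonal entries outside $\beta$ were set to exactly $\sighat^2$ and not to $1$; this choice is precisely what makes the diagonal case cancel.
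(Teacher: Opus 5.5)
Your proposal is correct and follows the paper's proof exactly: both pick an element of $\alpha \setminus \beta$, use the coverage constraint to find a tuple $r_j$ touching it, and observe that the corresponding entry of $\boldsymbol{\Sigma}_\beta - \sighat^2\mathbf{I}$ vanishes. Your split into the diagonal and off-diagonal cases just spells out the step the paper calls ``easy to see from the definition of $\boldsymbol{\Sigma}_\beta$''.
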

\begin{proof}
    Recall the definition of $\Phi_\beta(r_1, \ldots, r_k, \mathbf{t})$. If $\supp(r_1, \ldots, r_k) = \alpha$, then there is some $r_j$ such that $r_j(1)$ or $r_j(2)$ is not in $\beta$. For such $r_j$, it is easy to see from the definition of $\boldsymbol{\Sigma}_\beta$ that $(\boldsymbol{\Sigma}_\beta - \sighat^2\mathbf{I})_{r_j(1), r_j(2)} = 0$. Hence, the entire product is zero and the lemma follows.
\end{proof}
\noindent Combining all the facts we gathered up to this point, we obtain the following simplification, using in particular that whenever $k < \lceil |\alpha| / 2 \rceil$, then satisfying $\supp(r_1, \ldots, r_k) = \alpha$ is impossible, so the corresponding terms are all zero.

\begin{lemma}\label{lem:aftercancellation}
    Define $\ell \coloneqq  \lceil |\alpha| / 2 \rceil$
    For every $\beta \subseteq \alpha$ and any $\mathbf{t}$, we have
    \begin{align*}
        \sum_{\beta \subseteq \alpha} (-1)^{|\alpha \setminus \beta|}  \hspace{.1cm}\CF{\beta} = e^{-\frac{\sighat^2}{2} \|\mathbf{t}\|^2 } \left( \leadinghat + \resthat \right).
    \end{align*}
    with \begin{align*}
        \leadinghat  \coloneqq  \frac{(-1)^\ell}{2^\ell \ell!}  \sum_{ \substack{ r_1, \ldots r_k \in \alpha \times \alpha \\ \supp(r_1, \ldots, r_k ) = \alpha }} \phiterm{\alpha} \text{ and } \resthat \coloneqq\sum_{\beta \subseteq \alpha} (-1)^{|\alpha \setminus \beta|}\sum_{k=\ell + 1}^\infty \frac{(-1)^k}{2^k k!} (\mathbf{t}^\top \! (\boldsymbol{\Sigma}_\beta - \sighat^2\mathbf{I}) \ \mathbf{t})^k
    \end{align*}
\end{lemma}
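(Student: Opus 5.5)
The identity follows by collecting the expansion \eqref{eq:expansioncf} for every $\beta\subseteq\alpha$ and splitting the series in $k$ at $k=\ell$.

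For each $\beta\subseteq\alpha$, \eqref{eq:expansioncf} gives
$$\CF{\beta}=e^{-\frac{\sighat^2}{2}\|\mathbf{t}\|^2}\sum_{k\ge0}\frac{(-1)^k}{2^kk!}\sum_{r_1,\ldots,r_k\in\alpha\times\alpha}\phiterm{\beta}.$$
This series converges absolutely for every fixed $\mathbf{t}$, since it is the exponential series of the finite quantity $-\frac12\mathbf{t}^\top(\boldsymbol{\Sigma}_\beta-\sighat^2\mathbf{I})\mathbf{t}$. Because $\beta$ ranges over a finite set, the alternating sum $\sum_{\beta\subseteq\alpha}(-1)^{|\alpha\setminus\beta|}$ can be interchanged freely with the sums over $k$ and over $(r_1,\ldots,r_k)$. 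We then have
$$\sum_{\beta\subseteq\alpha}(-1)^{|\alpha\setminus\beta|}\CF{\beta}=e^{-\frac{\sighat^2}{2}\|\mathbf{t}\|^2}\sum_{k\ge0}\frac{(-1)^k}{2^kk!}\sum_{r_1,\ldots,r_k}\ \sum_{\beta\subseteq\alpha}(-1)^{|\alpha\setminus\beta|}\phiterm{\beta}.$$

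Next we sort the terms by $k$.

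\textbf{Terms with $k<\ell$.} Here $k$ tuples cover at most $2k<|\alpha|$ elements, so $\supp(r_1,\ldots,r_k)\neq\alpha$. By the cancellation argument preceding the lemma (factoring $\beta=\beta_1\cup\beta_2$ with $\beta_2\subseteq\overline{\gamma}$, and using $\sum_{\beta_2\subseteq\overline\gamma}(-1)^{|\overline\gamma\setminus\beta_2|}=0$ for $\overline\gamma\neq\emptyset$), each inner alternating sum vanishes.

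\textbf{The term $k=\ell$.} Again every $(r_1,\ldots,r_\ell)$ with $\supp\neq\alpha$ contributes zero. If $\supp=\alpha$, the last lemma gives $\phiterm{\beta}=0$ for every $\beta\subsetneq\alpha$. Only $\beta=\alpha$ survives, with sign $(-1)^0=1$. The $k=\ell$ contribution is therefore exactly
$$\frac{(-1)^\ell}{2^\ell\ell!}\sum_{\supp=\alpha}\Phi^{(r_1,\ldots,r_\ell)}_\alpha(\mathbf{t})=\leadinghat.$$

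\textbf{Terms with $k\ge\ell+1$.} These are left untouched. Recombining the inner sum over tuples into $(\mathbf{t}^\top(\boldsymbol{\Sigma}_\beta-\sighat^2\mathbf{I})\mathbf{t})^k$ gives precisely $\resthat$, with the $\beta$-sum outside as in its definition.

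The identity then holds pointwise in $\mathbf{t}$. The only point needing care is justifying the rearrangements, and it is mild: absolute convergence for fixed $\mathbf{t}$ follows from
$$\sum_k\frac{1}{2^kk!}\big(\|\mathbf{t}\|^2\opnorm{\boldsymbol{\Sigma}_\beta-\sighat^2\mathbf{I}}\big)^k<\infty .$$
Integrability against the Fourier kernel is not needed for this pointwise statement and can be deferred to the later estimate of $\rest$. I would also read the "$k$" in the index range of $\leadinghat$ as $\ell$.
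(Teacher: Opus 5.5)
Your proposal is correct and follows the paper's argument: expand each $\CF{\beta}$ via \eqref{eq:expansioncf}, use the two preceding lemmas to remove all tuples with $\supp(r_1,\ldots,r_k)\neq\alpha$ and all $\beta\subsetneq\alpha$ for covering tuples, note that covering is impossible for $k<\ell$, and leave the $k\ge\ell+1$ tail untouched as $\resthat$. Reading the index $k$ in $\leadinghat$ as $\ell$ is the intended reading. The interchange of the finite $\beta$-sum with the $k$-series needs only that each exponential series converges, since every fixed-$k$ level is a finite sum over tuples.
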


\subsection{Integrating out}
Now, it remains to use the facts established so far together with \Cref{lem:swfourier2} to get back to an expression for $ \Expectedsub{\mathbf{x}}{ \SW{K_{1,\alpha}} \mid (\mathbf{x}_u)_{u \in \alpha} }$. 
While \Cref{lem:spectralbound} yields that all the $\boldsymbol{\Sigma}_\beta$ are positive definite such that \Cref{lem:swfourier2} is applicable, we have to be a bit careful while integrating to avoid some convergence issues related to the infinite sums we wish to consider. To this end, define $\T = [-\infty, \infty]^{|\alpha|}$ and note that by \Cref{lem:swfourier2},
\begin{align*}
    \Expectedsub{\mathbf{x}}{ \SW{K_{1,\alpha}} \mid (\mathbf{x}_u)_{u \in \alpha} } & = \frac{-1}{(2\pi)^{|\alpha|}} \lim_{h\rightarrow \infty} \int_{\T} e^{-\trp{i\mathbf{t}}{\mathbf{x}}_{\tau}} \left( \sum_{\beta \subseteq \alpha} (-1)^{|\alpha \setminus \beta|}  \hspace{.1cm}\CF{\beta} \right) \prod_{e \in \mathbf{t}} \frac{\onebutnotactuallyone}{i\mathbf{t}(e)} \d \mathbf{t}. 
\end{align*}
To get what we want out of the above integral, we split 
\begin{align*}
    \Expectedsub{\mathbf{x}}{ \SW{K_{1,\alpha}} \mid (\mathbf{x}_u)_{u \in \alpha} } & = \underbrace{\frac{-1}{(2\pi)^{|\alpha|}} \lim_{h\rightarrow \infty} \int_{\T} e^{-\trp{i\mathbf{t}}{\mathbf{x}}_{\tau}} e^{-\frac{\sighat^2}{2} \|\mathbf{t}\|^2 } \leadinghat \prod_{e \in \mathbf{t}} \frac{\onebutnotactuallyone}{i\mathbf{t}(e)} \d \mathbf{t}}_{\eqqcolon \I_1} \\
    &\hspace{1.5cm} + \underbrace{ \frac{-1}{(2\pi)^{|\alpha|}} \lim_{h\rightarrow \infty} \int_{\T} e^{-\trp{i\mathbf{t}}{\mathbf{x}}_{\tau}} \left( \left( \sum_{\beta \subseteq \alpha} (-1)^{|\alpha \setminus \beta|}  \hspace{.1cm}\CF{\beta} \right) -  e^{-\frac{\sighat^2}{2}\|\mathbf{t}\|^2} \leadinghat \right) \prod_{e \in \mathbf{t}} \frac{\onebutnotactuallyone}{i\mathbf{t}(e)} \d \mathbf{t}}_{\eqqcolon \I_2}.  
\end{align*}

\noindent The first term evaluates explicitly and is equal to the leading term $\leading$ from \Cref{prop:fourierexpressionthing}.
\begin{lemma}[Leading term]\label{lem:smalltleading}
    \begin{align*}
        \I_1  =\lim_{h\rightarrow \infty} \frac{-1}{(2\pi)^{|\alpha|}}\int_{\T} e^{-\trp{i\mathbf{t}}{\mathbf{x}}_{\tau}} e^{-\frac{\sighat^2}{2} \| \mathbf{t} \|^2 } \  \leadinghat \prod_{e \in \mathbf{t}} \frac{\onebutnotactuallyone}{i\mathbf{t}(e)} \d \mathbf{t} = \leading.
    \end{align*}
\end{lemma}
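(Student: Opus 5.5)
The plan is to expand $\leadinghat$ into its finitely many monomials and integrate each one separately, using that the integral factorizes over the coordinates $e \in \alpha$. For each tuple $r_1,\ldots,r_\ell$ with $\supp(r_1,\ldots,r_\ell)=\alpha$, the term $\phitermell{\alpha}$ equals the constant $\prod_{j}\big(\tfrac1d\langle \mathbf{x}_{r_j(1)},\mathbf{x}_{r_j(2)}\rangle - \sighat^2\mathbf{I}_{r_j(1),r_j(2)}\big)$ times the monomial $\prod_{e\in\alpha}\mathbf{t}(e)^{s_e}$, where every $s_e\ge 1$ by the coverage constraint. Since the sum is finite, linearity of the limit and the integral lets me pull out the constants and the prefactor $\frac{(-1)^\ell}{2^\ell \ell!}$. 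The remaining integrand is the product over $e\in\alpha$ of
$$
e^{-i\mathbf{t}(e)\tau}\, e^{-\frac{\sighat^2}{2}\mathbf{t}(e)^2}\, \mathbf{t}(e)^{s_e}\, \frac{1-e^{ih\mathbf{t}(e)}}{i\mathbf{t}(e)} .
$$
By Fubini, which applies because each factor is absolutely integrable thanks to the Gaussian, the $|\alpha|$-dimensional integral is a product of one-dimensional integrals.

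Each one-dimensional factor is handled in three steps. First, because $s_e\ge 1$, the factor $\mathbf{t}(e)^{s_e}/(i\mathbf{t}(e)) = -i\,(i\mathbf{t}(e))^{s_e-1}$ is a polynomial, so there is no singularity at the origin. Second, the piece multiplied by $e^{ih\mathbf{t}(e)}$ is the Fourier transform of a Schwartz function evaluated at the shifted point $\tau-h$. It therefore tends to $0$ as $h\to\infty$, by Riemann–Lebesgue or directly from the Gaussian decay of the derivatives of the density. Third, what remains is $\frac{1}{2\pi}\int e^{-it\tau}(it)^{s-1}e^{-\sighat^2 t^2/2}\,\d t$. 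By \Cref{thm:inversion}, differentiated under the integral, this equals $\frac{\d^{s-1}}{\d x^{s-1}}$ of the $\mathcal N(0,\sighat^2)$ density evaluated at $x=\tau$. Expressed through the standard Gaussian density, it is $\phi^{(s-1)}$ evaluated at the corresponding point.

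After that, the work is bookkeeping. I need to collect the factors $-i$ (one per edge) and the overall sign $-1$ from \Cref{lem:swfourier2}, and check that they combine with $(-1)^\ell/(2^\ell\ell!)$ to give exactly the sign and normalisation in $\leading$, including the $(2\pi)^{-|\alpha|}$. I expect this sign, scaling and convention matching to be the main obstacle. There are two specific points. The first is whether $\phi^{(s_e-1)}(\tau)$ in the statement refers to the density of variance $\sighat^2$, or to the standard density with a rescaling absorbed. The second is that the sign of $e^{\pm it\tau}$ in the inversion formula determines whether derivatives appear at $\tau$ or at $-\tau$.

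To handle this, I would first verify the one-dimensional identity on the simplest case $\alpha$ a single vertex with $\ell=1$. There $\leading$ should reduce to $-\tfrac12(\tfrac1d\|\mathbf{x}_u\|^2-\sighat^2)\phi'(\tau)$, which matches the variance-perturbation expansion in the proof of \Cref{lem:divergenceofgaussianswithdifferentvariances}. I would then fix the conventions accordingly.
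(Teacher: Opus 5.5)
Your structural argument is the same as the paper's. You expand the finite sum over covering tuples and factor the integral over $e\in\alpha$. The $e^{ih\mathbf t(e)}$ piece is a derivative of the $\mathcal N(0,\sighat^2)$ density at $\tau-h$, so it vanishes as $h\to\infty$, and the remaining factor is a derivative of that density at $\tau$. That part is sound.

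The gap is in the sign bookkeeping you defer, and two concrete claims you make about it are wrong.

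\textbf{The identity.} You write $\mathbf t(e)^{s_e}/(i\mathbf t(e))=-i\,(i\mathbf t(e))^{s_e-1}$, but the correct identity is $\mathbf t(e)^{s_e}/(i\mathbf t(e))=(-i)^{s_e}(i\mathbf t(e))^{s_e-1}$; the two already differ at $s_e=2$. So there is no ``one $-i$ per edge'', which would in fact make the answer non-real for odd $|\alpha|$. The phases multiply to $(-i)^{\sum_e s_e}=(-i)^{2\ell}=(-1)^\ell$, which is what the paper encodes as $\mathbf t(r_j(1))\mathbf t(r_j(2))=-(i\mathbf t(r_j(1)))(i\mathbf t(r_j(2)))$.

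\textbf{The sanity check.} In \Cref{lem:divergenceofgaussianswithdifferentvariances} the first-order term is $-\tfrac12(1-\sigma^2)\phi'(x)$, where $1-\sigma^2$ is reference minus actual variance. Writing $s^2=\tfrac1d\|\mathbf x_u\|^2$, the true single-leaf value is $\Phi_s(\tau)-\Phi_{\sighat}(\tau)\approx+\tfrac12(s^2-\sighat^2)\phi_{\sighat}'(\tau)$. This is the opposite sign to the $\ell=1$ case of the displayed $\leading$, so your check does not confirm the formula; it refutes it.

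\textbf{What the computation actually gives.} Use a correctly normalised inversion, $\mathbb{P}(\mathbf z_\beta\le\mathbf x_\tau)=\lim_{h\to\infty}(2\pi)^{-|\alpha|}\int e^{-i\mathbf t^\top\mathbf x_\tau}\varphi_\beta(\mathbf t)\prod_e\frac{e^{ih\mathbf t(e)}-1}{i\mathbf t(e)}\,\mathrm{d}\mathbf t$. If the factors are written as $1-e^{ih\mathbf t(e)}$, as in \Cref{lem:swfourier2}, the prefactor must be $(-1)^{|\alpha|}$, not $-1$. Combine this with $\frac{1}{2\pi}\int (it)^m e^{-it\tau}e^{-\sighat^2t^2/2}\,\mathrm{d}t=(-1)^m\phi_{\sighat}^{(m)}(\tau)$. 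Each covering tuple then contributes
$\frac{1}{2^\ell\ell!}\prod_j\big(\tfrac1d\langle\mathbf x_{r_j(1)},\mathbf x_{r_j(2)}\rangle-\sighat^2\mathbf I_{r_j(1),r_j(2)}\big)\prod_{e\in\alpha}\phi_{\sighat}^{(s_e-1)}(\tau)$.
The signs work out as follows:
\begin{itemize}
\item the phase $(-1)^\ell$ above cancels the Taylor sign $(-1)^\ell$;
\item the $(-1)^{|\alpha|}$ left from $e^{ih\mathbf t(e)}-1$ after $h\to\infty$ cancels $(-1)^{\sum_e(s_e-1)}=(-1)^{|\alpha|}$ from the derivative identity.
\end{itemize}
The case $|\alpha|=2$ gives $\tfrac1d\langle\mathbf x_1,\mathbf x_2\rangle\phi_{\sighat}(\tau)^2$, which is positive under positive correlation, as it must be.

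So the displayed $\leading$ carries a spurious $(-1)^\ell$ and should use $\phi_{\sighat}$, evaluated at $+\tau$. Your plan to match it ``exactly'' cannot succeed as written. The paper's own proof has the same slips:
\begin{itemize}
\item the $-1$ prefactor;
\item the identity $\frac1{2\pi}\int\cdots=\phi^{(s)}_{\sighat}(\tau)$, which is missing a factor $(-1)^s$;
\item an intermediate prefactor $(-1)^{\ell+1}$ that, combined with $\frac{(-1)^\ell}{2^\ell\ell!}$, does not give the coefficient of $\leading$ it then asserts.
\end{itemize}
The correction does not affect the rest of the paper, which only uses $|\leading|$ and products or second moments of $\leading$ at fixed $|\alpha|$.
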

\begin{proof}
    Since the sum is finite, we can exchange sum and integral to get 
    $$
        \I_1  =\frac{(-1)^\ell}{2^\ell \ell!}  \sum_{ \substack{ r_1, \ldots r_\ell \in \alpha \times \alpha \\ \supp(r_1, \ldots, r_\ell ) = \alpha }}  \lim_{h\rightarrow \infty} \frac{-1}{(2\pi)^{|\alpha|}}\int_{[-\infty, \infty]^{|\alpha|}} e^{-\frac{\sighat^2}{2} \| \mathbf{t} \|^2 } \phitermell{\alpha} \prod_{e \in \mathbf{t}} \frac{\onebutnotactuallyone}{i\mathbf{t}(e)} \d \mathbf{t}.
    $$
    Using the definition of $\phitermell{\alpha}$, the integral now evaluates explicitly as 
    \begin{align*}
        &\frac{-1}{(2\pi)^{|\alpha|}}\int_{[-\infty, \infty]^{|\alpha|}} e^{-\trp{i\mathbf{t}}{\mathbf{x}}_{\tau}}e^{-\frac{\sighat^2}{2} \| \mathbf{t} \|^2 } \phitermell{\alpha} \prod_{e \in \mathbf{t}} \frac{\onebutnotactuallyone}{i\mathbf{t}(e)} \d \mathbf{t} \\
        &\hspace{2cm}= \frac{(-1)^{\ell+1}}{(2\pi)^{|\alpha|}} \Bigg(\prod_{j = 1}^\ell (\boldsymbol{\Sigma}_\alpha - \sighat^2\mathbf{I})_{r_j(1), r_j(2)} \Bigg) \int_{[-\infty, \infty]^{|\alpha|}} e^{-\frac{\sighat^2}{2} \| \mathbf{t} \|^2 } \prod_{e \in \alpha} (i\mathbf{t}(e))^{s_e-1} \left(e^{-i\mathbf{t}(e)\tau} - e^{-i\mathbf{t}(e)(\tau - h)}\right) \d \mathbf{ t}
    \end{align*}
    where we ask the reader to recall the definition of $s_e$ from \Cref{prop:fourierexpressionthing}. Notice further how we used $\mathbf{t}(r_j(1))\mathbf{t}(r_j(2)) = - (i\mathbf{t}(r_j(1)))(i\mathbf{t}(r_j(2)))$. Because $\frac{1}{2\pi}\int_{-\infty}^\infty e^{-\sighat^2 \mathrm{t}^2/2 }  (i\mathrm{t})^{s} e^{-i\mathrm{t}\tau} \d \mathrm{t} = \phi_{\sighat}^{(s)}(\tau)$, the integral now evaluates explicitly and can be expressed in terms of a product over derivatives of the Gaussian density with variance $\sighat$
    \begin{align*}
        &\frac{1}{(2\pi)^{|\alpha|}}\int_{[-\infty, \infty]^{|\alpha|}} e^{-\frac{\sighat^2}{2} \| \mathbf{t} \|^2 } \prod_{e \in \alpha} (i\mathbf{t}(e))^{s_e-1} \left(e^{-i\mathbf{t}(e)\tau} - e^{-i\mathbf{t}(e)(\tau-h)}\right) \d \mathbf{ t} \\
        &\hspace{3cm} = \prod_{e \in \alpha} \frac{1}{2\pi}\int_{-\infty}^\infty e^{-\frac{\sighat^2}{2} \mathrm{t}^2 }  (i\mathrm{t})^{s_e-1} \left(e^{-i\mathrm{t}\tau} - e^{-i\mathrm{t}(\tau - h)}\right) \d \mathrm{t} =  \prod_{e \in \alpha} \left( \phi_{\sighat}^{(s_e - 1)}(\tau) - \phi_{\sighat}^{(s_e - 1)}(\tau - h) \right).
    \end{align*}
    Taking the limit over $h \rightarrow \infty$, the second term (i.e. $\phi_{\sighat}^{(s_e - 1)}(\tau - h)$) in each factor vanishes and we end up with $\leading$, as desired.
\end{proof}

Regarding $\I_2$, we use a different strategy and split the integral into two parts based on $\|\mathbf{t}\|\le T$ and $\| \mathbf{t}\| > T$ where $T$ is some large enough value (as a function of $d, |\alpha|$) to be chosen later. To split the integral, we set 
$$
    f(\mathbf{t}) \coloneqq \left( \sum_{\beta \subseteq \alpha} (-1)^{|\alpha \setminus \beta|}  \hspace{.1cm}\CF{\beta} \right) -  e^{-\frac{\sighat^2}{2}\|\mathbf{t}\|^2} \leadinghat 
$$ and bound
\begin{align*}
    | \I_2 | \le \int_{\| \mathbf{t} \| < T} | f(\mathbf{t}) | \d \mathbf{t} + \int_{\| \mathbf{t} \| \ge T} | f(\mathbf{t}) | \d \mathbf{t}.
\end{align*}
Notice that we can omit the limit over $h$, as $\left|\prod_{e \in \mathbf{t}} \frac{\onebutnotactuallyone}{i\mathbf{t}(e)} \d \mathbf{t}\right| \le 1$ for all $h$, so any bound on the above expression implies the same bound the limiting expression over $h \rightarrow \infty$. We bound the two parts above now separately.

\begin{lemma}[Bound for large $\|\mathbf{t}\|$]\label{lem:larget2}
    $$
        \lim_{T \rightarrow \infty }\int_{\| \mathbf{t} \| \ge T} | f(\mathbf{t}) | \d \mathbf{t} = 0. 
    $$
\end{lemma}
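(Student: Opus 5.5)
The plan is to show that $|f|$ is integrable over all of $\T = [-\infty,\infty]^{|\alpha|}$. The tail integral over $\{\|\mathbf{t}\| \ge T\}$ then tends to zero as $T \to \infty$ by dominated convergence, or equivalently by absolute continuity of the integral. By the triangle inequality,
$$
|f(\mathbf{t})| \le \sum_{\beta \subseteq \alpha} |\CF{\beta}| + e^{-\frac{\sighat^2}{2}\|\mathbf{t}\|^2} |\leadinghat|,
$$
so it suffices to show that each of these finitely many summands is integrable.

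For the first group, $|\CF{\beta}| = \exp(-\frac12 \mathbf{t}^\top \boldsymbol{\Sigma}_\beta \mathbf{t})$. By \Cref{lem:spectralbound}, under $\dassump$ with $\varepsilon$ small enough and on $\Sevent$, we have $\opnorm{\boldsymbol{\Sigma}_\beta - \mathbf{I}} \le \delta(\varepsilon) \le \frac12$. Hence $\lambda_{\min}(\boldsymbol{\Sigma}_\beta) \ge \frac12$, which gives
$$
|\CF{\beta}| \le e^{-\|\mathbf{t}\|^2/4}.
$$
This is a Gaussian and is integrable over $\mathbb{R}^{|\alpha|}$.

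For the second term, $\leadinghat$ is a finite sum of the terms $\phitermell{\alpha}$. Each is a product of $\ell$ bounded matrix entries, namely entries of $\boldsymbol{\Sigma}_\alpha - \sighat^2\mathbf{I}$, which are at most $C\rho/\sqrt{d}$ in absolute value. The remaining factor is a monomial of degree $2\ell$ in $\mathbf{t}$. So
$$
|\leadinghat| \le K(1+\|\mathbf{t}\|)^{2\ell}
$$
for a constant $K$ depending on $\alpha$ and $\xr$ but not on $\mathbf{t}$. Multiplied by $e^{-\sighat^2\|\mathbf{t}\|^2/2}$, with $\sighat^2 \ge \frac12$ by \Cref{lem:sigbound}, this is integrable.

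Combining, $|f(\mathbf{t})| \le g(\mathbf{t}) \coloneqq 2^{|\alpha|} e^{-\|\mathbf{t}\|^2/4} + K(1+\|\mathbf{t}\|)^{2\ell} e^{-\|\mathbf{t}\|^2/4}$, and $g \in L^1$. Therefore
$$
\int_{\|\mathbf{t}\|\ge T} |f| \le \int_{\|\mathbf{t}\|\ge T} g \to 0
$$
as $T \to \infty$.

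There is no real obstacle in this argument. The only point needing care is that the bound uses the truncated expression $f$ rather than the infinite series $\resthat$, which may not be absolutely convergent after integration. Working directly with $f$, a finite combination of Gaussian characteristic functions and a polynomial times a Gaussian, avoids that issue. As noted in the text, the factor $\prod_e \onebutnotactuallyone/(i\mathbf{t}(e))$ has already been bounded by $1$ in absolute value, so it plays no role here.
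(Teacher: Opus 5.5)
Your proof is correct and is essentially the paper's argument: you split $|f|$ by the triangle inequality, dominate each $|\CF{\beta}|$ by $e^{-\|\mathbf{t}\|^2/4}$ via \Cref{lem:spectralbound}, and treat $e^{-\sighat^2\|\mathbf{t}\|^2/2}|\leadinghat|$ as a polynomial times a Gaussian, with your explicit integrable majorant making the paper's ``the full integral is finite, so the tail vanishes'' step precise. One aside: your closing remark repeats the paper's claim that $\bigl|\prod_e (1-e^{ih\mathbf{t}(e)})/(i\mathbf{t}(e))\bigr|\le 1$, which is false (each factor has modulus $2|\sin(h\mathbf{t}(e)/2)|/|\mathbf{t}(e)|$, which tends to $h$ as $\mathbf{t}(e)\to 0$), but since $f$ does not contain this factor, it does not affect your proof of this lemma.
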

\begin{proof}
    Bound
    \begin{align*}
        \int_{\| \mathbf{t} \| \ge T} | f(\mathbf{t}) | \d \mathbf{t} \le \int_{\| \mathbf{t} \| \ge T} e^{-\frac{\sighat^2}{2}\|\mathbf{t}\|^2}  \left| \leadinghat \right| \d \mathbf{t} + \sum_{\beta \subseteq \alpha} \int_{\| \mathbf{t} \| \ge T} \left| \CF{\beta} \right| \d \mathbf{t}.
    \end{align*}
    To see that the limit over $T \rightarrow \infty$ for the first term is $0$, it suffices to note that $\leadinghat$ is a polynomial in $\mathbf{t}$, so the integral over all of $\T$ would be finite and thus  tends to zero once we restrict $\| \mathbf{t} \| \ge T$ and send $T \rightarrow \infty$. For the second term, we use \Cref{lem:spectralbound} to get $ \opnorm{\boldsymbol{\Sigma}_\beta - \mathbf{I} } \le \delta(\varepsilon)$ and thus $
        \mathbf{t}^\top \boldsymbol{\Sigma}_\beta \mathbf{t} = \|\mathbf{t}\|^2 + \mathbf{t}^\top \left( \boldsymbol{\Sigma}_\beta - \mathbf{I} \right) \mathbf{t} \ge (1 - \delta(\varepsilon))\|\mathbf{t}\|^2.
    $ Hence for $\varepsilon$ small enough, we get \begin{align*}
        |\CF{\beta}| &= e^{-\frac{1}{2} \mathbf{t}^\top \boldsymbol{\Sigma}_\beta \mathbf{t}} \le e^{-\frac{1}{4} \| \mathbf{t}\|^2} \text{ and thus } \lim_{T \rightarrow \infty} \sum_{\beta \subseteq \alpha} \int_{\| \mathbf{t} \| \ge T} \left| \CF{\beta} \right| \d \mathbf{t} = 0. \qedhere
    \end{align*}
\end{proof}

\noindent For $\|\mathbf{t}\| \le T$, we use the fact that $\resthat$ is equal to a Taylor expansion of $f(\mathbf{t})$, which is uniformly convergent for all $\mathbf{t} \le T$. Once we restrict $\|\mathbf{t}\| \le T$, it is therefore not hard to argue that we can exchange the sum and the integral. Each integral in the resulting sum can then be bounded explicitly such that the entire sum is convergent and small enough. We capture this in the following lemma.

\begin{lemma}[Bound for small $\|\mathbf{t}\|$]\label{lem:smallt2}
    There is an absolute constant $C > 0$ such that for every $T \in \mathbb{R}_{\ge 0}$,
    \begin{align*}
        \int_{\| \mathbf{t} \| < T} | f(\mathbf{t}) | \d \mathbf{t} = \int_{\| \mathbf{t} \| < T}  e^{-\frac{\sighat^2}{2}\|\mathbf{t}\|^2} \left| \resthat \right| \d \mathbf{t} \le \Bigg( \frac{C|\alpha| \rho}{\sqrt{d}} \Bigg)^{\ell + 1}.
    \end{align*}
\end{lemma}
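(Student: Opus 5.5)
For $\|\mathbf{t}\|<T$ I would first justify the term-by-term expansion and then bound the remainder series directly. Write $\mathbf{A}_\beta \coloneqq \boldsymbol{\Sigma}_\beta - \sighat^2\mathbf{I}$. By \Cref{lem:aftercancellation}, on this ball we have $f(\mathbf{t}) = e^{-\frac{\sighat^2}{2}\|\mathbf{t}\|^2}\resthat$, where
\[
\resthat=\sum_{\beta\subseteq\alpha}(-1)^{|\alpha\setminus\beta|}\sum_{k\ge \ell+1}\frac{(-1)^k}{2^kk!}(\mathbf{t}^\top\mathbf{A}_\beta\mathbf{t})^k .
\]
On a compact set the exponential series converges uniformly, which gives the first equality of the lemma. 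It also allows us to exchange the sum and the integral.

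The key step is to keep the cancellations when bounding $\resthat$, rather than bounding each $\beta$-term separately. For fixed $k$, the cancellation lemmas give
\[
\sum_\beta(-1)^{|\alpha\setminus\beta|}(\mathbf{t}^\top\mathbf{A}_\beta\mathbf{t})^k=\sum_{\supp(r_1,\dots,r_k)=\alpha}\phiterm{\alpha}.
\]
This is a polynomial in $\mathbf{t}$ in which every variable $\mathbf{t}(e)$ appears with degree at least one. I would bound its absolute value in a way that exploits this coverage constraint. Each $\phiterm{\alpha}$ is bounded by $(C\rho/\sqrt d)^k\prod_e|\mathbf{t}(e)|^{s_e}$, using $\|\mathbf{A}_\alpha\|_\infty\le C\rho/\sqrt d$ from $\Seventshort$ and \Cref{lem:sigbound}. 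Integrating against $e^{-\frac{\sighat^2}{2}\|\mathbf{t}\|^2}$, with the extra factor $\prod_e 1/|\mathbf{t}(e)|$ coming from \Cref{lem:swfourier2}, factorizes over coordinates. The result is $\prod_e \Gamma(s_e/2)$-type moments, i.e.\ $\prod_e (Cs_e)^{s_e/2}$ by \Cref{lem:stirling}. Since $\sum_e s_e=2k$, this is at most $(Ck)^{k}$ up to $C^{|\alpha|}$ factors. Note that $|\alpha|\le 2k$.

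The number of tuples is at most $|\alpha|^{2k}$. The $k$-th term is therefore at most
\[
\frac{1}{2^kk!}|\alpha|^{2k}(C\rho/\sqrt d)^k(Ck)^k\le (C'|\alpha|^2\rho/\sqrt d)^k .
\]
This loses a factor of $|\alpha|$ per step, so I would refine the count. Use the operator norm bound $\opnorm{\mathbf{A}_\beta}\le C|\alpha|\rho/\sqrt d$ from \Cref{lem:spectralbound}. Write the $k$-th coefficient as $\sum_{\gamma}$ over all $\beta$ of an inclusion–exclusion on the full quadratic form, giving $|\mathbf{t}^\top\mathbf{A}_\beta\mathbf{t}|^k\le (C|\alpha|\rho/\sqrt d)^k\|\mathbf{t}\|^{2k}$. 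The Gaussian moment $\int e^{-\frac{\sighat^2}{2}\|\mathbf{t}\|^2}\|\mathbf{t}\|^{2k}/\prod|\mathbf{t}(e)|$ is of order $(C(k+|\alpha|))^{k}$ times Gaussian normalization. Dividing by $k!$ gives $(C|\alpha|\rho/\sqrt d)^k$ per $\beta$. The $2^{|\alpha|}$ choices of $\beta$ are absorbed because $k\ge \ell+1\ge |\alpha|/2$, turning $2^{|\alpha|}$ into $C^k$. The singularity $1/|\mathbf{t}(e)|$ is harmless because $\resthat$ vanishes on each coordinate hyperplane by the coverage constraint. Concretely, I would use the coverage form for the polynomial factor and the operator norm only to bound the magnitude of the coefficients.

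Finally, I would sum the geometric series over $k\ge\ell+1$. It converges because $|\alpha|^2\rho^2\le\varepsilon d$, and it yields $(C|\alpha|\rho/\sqrt d)^{\ell+1}$. The main obstacle is this middle step: combining the coverage structure, which is needed to cancel the $1/\mathbf{t}(e)$ singularities and the $2^{|\alpha|}$ alternating sum, with the operator-norm bound, which is needed to get $|\alpha|$ rather than $|\alpha|^2$ per power of $1/\sqrt d$. If the direct combination fails, I would expand via $\mathbf{A}_\beta=P_\beta\mathbf{A}_\alpha P_\beta$, where $P_\beta$ is the coordinate projection onto $\beta$. I would then write the alternating sum as a Möbius-type derivative in the coordinates of $\mathbf{t}$, and bound it via Cauchy integral estimates on a polydisc of radius of order $\sqrt{d}/(|\alpha|\rho)$.
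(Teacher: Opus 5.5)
\textbf{The gap.} The lemma bounds $\int_{\|\mathbf{t}\|<T}|f(\mathbf{t})|\,\d\mathbf{t}$ with $f(\mathbf{t})=e^{-\frac{\sighat^2}{2}\|\mathbf{t}\|^2}\resthat$. No weight $\prod_e|\mathbf{t}(e)|^{-1}$ appears in the statement, and bringing that weight in is exactly what breaks your argument.

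In your operator-norm step you bound each $\beta$ separately by $\|\mathbf{t}\|^{2k}$. You then claim that $\int e^{-\frac{\sighat^2}{2}\|\mathbf{t}\|^2}\|\mathbf{t}\|^{2k}\prod_e|\mathbf{t}(e)|^{-1}\,\d\mathbf{t}$ is of order $(C(k+|\alpha|))^k$. This integral is in fact infinite: $\|\mathbf{t}\|^{2k}$ does not vanish on the hyperplanes $\{\mathbf{t}(e)=0\}$, and $\int_{-1}^{1}|s|^{-1}\,\d s=\infty$.

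The vanishing of $\resthat$ on those hyperplanes comes from the alternating sum over $\beta$. The per-$\beta$ triangle inequality throws that structure away. So ``the singularity is harmless by coverage'' cannot be combined with the operator-norm bound in the way you describe. Your coverage-form bound does tolerate the weight, but it only gives $(C|\alpha|^2\rho/\sqrt{d})^k$, which $|\alpha|^2\rho^2\le\varepsilon d$ does not make summable, as you noted yourself. The Cauchy-estimate fallback is not carried out. As written, the proposal does not close.

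\textbf{The fix.} Drop the weight, and your operator-norm paragraph becomes exactly the paper's proof:
\begin{itemize}
\item Pointwise, $|\resthat|\le\sum_{\beta\subseteq\alpha}\sum_{k\ge\ell+1}\frac{1}{2^kk!}\|\boldsymbol{\Sigma}_\beta-\sighat^2\mathbf{I}\|_{\mathrm{op}}^k\,\|\mathbf{t}\|^{2k}$, and exchanging sum and integral is justified by Tonelli.
\item \Cref{lem:spectralbound} gives $\|\boldsymbol{\Sigma}_\beta-\sighat^2\mathbf{I}\|_{\mathrm{op}}\le C|\alpha|\rho/\sqrt{d}$.
\item Polar coordinates give $\int e^{-\frac{\sighat^2}{2}\|\mathbf{t}\|^2}\|\mathbf{t}\|^{2k}\,\d\mathbf{t}\le C^{k+|\alpha|}\,\mathrm{Vol}(\mathbb{S}^{|\alpha|-1})\,\Gamma(k+|\alpha|/2)$.
\item Using $\mathrm{Vol}(\mathbb{S}^{|\alpha|-1})\le (C/|\alpha|)^{|\alpha|/2}$, Stirling, and $k\ge|\alpha|/2$, the $k$-th term is at most $2^{|\alpha|}(C|\alpha|\rho/\sqrt{d})^k(Ck/|\alpha|)^{|\alpha|/2}$.
\item Consecutive ratios are at most $C\sqrt{\varepsilon}\,(1+1/k)^{|\alpha|/2}\le Ce\sqrt{\varepsilon}\le 1/2$, so the series is dominated by its first term.
\item The factors $2^{|\alpha|}$ and $C^{|\alpha|}$ are absorbed into $C^{\ell+1}$, since $|\alpha|\le 2\ell$.
\end{itemize}
No cancellation across $\beta$ is needed for $k\ge\ell+1$. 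The coverage constraint matters only upstream, to kill the terms with $k<\ell$ and isolate $\leadinghat$.

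Your concern about the weight is legitimate one step later. When this lemma is used to bound $|\I_2|$, the paper discards $\prod_e(1-e^{ih\mathbf{t}(e)})/(i\mathbf{t}(e))$ by asserting that its modulus is at most $1$. That fails near the coordinate hyperplanes, since each factor has modulus $2|\sin(h\mathbf{t}(e)/2)|/|\mathbf{t}(e)|$. However, that issue lies outside the present statement.
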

\begin{proof}
    We apply Cauchy-Schwarz within the definition of $\resthat$ together with the spectral norm bound from \Cref{lem:spectralbound} to get  
    \begin{align*}
         \int_{\| \mathbf{t} \| < T}  e^{-\frac{\sighat^2}{2}\|\mathbf{t}\|^2} \left| \resthat \right| \d \mathbf{t} &\le  \sum_{\beta \subseteq \alpha} \sum_{k = \ell + 1}^\infty \frac{1}{2^k k!} \int_{[-\infty, \infty]^{|\alpha|}} e^{-\frac{\sighat^2}{2}\|\mathbf{t}\|^2} \|\mathbf{t}\|^{2k} \opnorm{\boldsymbol{\Sigma}_\beta - \sighat^2\mathbf{I}}^k \d \mathbf{t} \\
         &\le  2^{|\alpha|} \sum_{k = \ell + 1}^\infty \Bigg( \frac{C|\alpha| \rho }{k\sqrt{d}} \Bigg)^k  \int_{[-\infty, \infty]^{|\alpha|}} e^{-\frac{\sighat^2}{2}\|\mathbf{t}\|^{2}} \|\mathbf{t}\|^{2k} \d \mathbf{t}
    \end{align*}
    where in the last step, we used Stirling's approximation to take $k!$ into the exponential term. The integral above can now be bounded by exploiting the spherical symmetry, i.e., by rewriting  
    \begin{align*}
         &\int_{[-\infty, \infty]^{|\alpha|}} e^{-\frac{\sighat^2}{2}\|\mathbf{t}\|^{2}} \|\mathbf{t}\|^{2k} \d \mathbf{t} = \text{Vol}(\mathbb{S}^{|\alpha|-1}) \int_\infty^\infty e^{-\frac{\sighat^2}{2}t^2} t^{2k + |\alpha| - 1} \d t \\
         &\hspace{3.2cm}\le C \ \text{Vol}(\mathbb{S}^{|\alpha|-1}) \int_\infty^\infty e^{-s} s^{\frac{2k + |\alpha|}{2} - 1} \d s = C \ \text{Vol}(\mathbb{S}^{|\alpha|-1}) \ \Gamma\left( \frac{2k + |\alpha|}{2} \right).
    \end{align*} where $C> 0$ is a constant and $\text{Vol}(\mathbb{S}^{|\alpha|-1})$ is the volume of the $|\alpha|$-dimensional unit sphere, which can be bounded as 
    $
        \text{Vol}(\mathbb{S}^{|\alpha|-1}) \le ( C/|\alpha|)^{|\alpha|/2}
    $. Plugging all this back into our sum over $k$, we get that 
    \begin{align*}
        \int_{\| \mathbf{t} \| < T}  e^{-\frac{\sighat^2}{2}\|\mathbf{t}\|^2} \left| \resthat \right| \d \mathbf{t} &\le 2^{|\alpha|} \sum_{k=\ell + 1}^\infty \Bigg( \frac{C|\alpha| \rho }{k\sqrt{d}} \Bigg)^k \left( \frac{C}{|\alpha|}\right)^{\frac{|\alpha|}{2}} \left( Ck \right)^{\frac{2k +|\alpha|}{2}}\\ 
        &\le 2^{|\alpha|} \sum_{k=\ell + 1}^\infty \Bigg( \frac{C^2|\alpha| \rho }{\sqrt{d}} \Bigg)^{k}  \left( \frac{C^2k}{|\alpha|}\right)^{\frac{|\alpha|}{2}}.
    \end{align*}
    Now, by our assumption that $\dassump$ we get that the base of the first factor in the sum is at most $C^2\sqrt{\varepsilon}$. Hence, the ratio of two consecutive terms in the above sum is at most 
    $$
        C^2\sqrt{\varepsilon} \ \left( \frac{k+1}{k} \right)^{\frac{|\alpha|}{2}} = C^2\sqrt{\varepsilon} \ \left( 1 + \frac{1}{k} \right)^{\frac{|\alpha|}{2}} \le \frac{1}{2}
    $$ for sufficiently small $\varepsilon$, where we used that $k \ge \ell \ge  |\alpha|/2$, so the second factor is at most a constant. Therefore, the sum is geometric and dominated by its first term, which provides our explicit upper bound as stated in the lemma.
\end{proof}

Finally, we can stack everything toghether into a proof of \Cref{prop:fourierexpressionthing}.

\begin{proof}[Proof of \Cref{prop:fourierexpressionthing}]
    Using \Cref{lem:swfourier2}, we get $\Expectedsub{\mathbf{x}}{ \SW{K_{1,\alpha}} \mid (\mathbf{x}_u)_{u \in \alpha} } = \I_1 + \I_2$ with $\I_1, \I_2$ as defined at the beginning of this section. Positive definiteness of $\boldsymbol{\Sigma}_\beta$ follows by the spectral bound from \Cref{lem:spectralbound} for sufficiently small $\varepsilon$. By \Cref{lem:smalltleading}, we now have $\I_1 = \leading$, which can be bounded in absolute value by noting that the sum appearing in $\leading$ has at most $|\alpha|^{2\ell}$ terms, while each summand can be bounded by $C\rho /\sqrt{d}$ due to the conditioning on $\Seventshort$. Hence, using that $\ell! \ge (\frac{\ell}{2e})^\ell$ and $\ell \ge |\alpha|/2$, we get that $$
        |\leading| \le \left( \frac{C|\alpha|\rho}{\sqrt{d}} \right)^{\ell}.
    $$    
    Now, it only remains to argue that $$
        | \I_2 | \le \int_{\| \mathbf{t} \| < T} | f(\mathbf{t}) | \d \mathbf{t} + \int_{\| \mathbf{t} \| \ge T} | f(\mathbf{t}) | \d \mathbf{t} \le \Bigg( \frac{C|\alpha| \rho}{\sqrt{d}} \Bigg)^{\ell + 1}.
    $$ 
    To this end, we choose $T$ large enough such that the integral over $\|\mathbf{t}\| \ge T$ is negligibly small compared to the above bound we wish to obtain. This is possible by \Cref{lem:larget2}. Once we fix any such $T$ and consider the the integral over $\|\mathbf{t}\| \le T$, we get the desired bound from \Cref{lem:smallt2}. 
\end{proof}

\bibliography{literature}

\section{Deferred Proofs}\label{sec:deferredproofs}

\begin{proof}[Proof of \Cref{clm:stars}]
        Recall that $\sighat$ is defined such that $\Pr{Z \le \tau} = \Prnop{\frac{1}{\sqrt{d}} \langle \mathbf{x}_i, \mathbf{x}_j \rangle \le \tau }$ with $Z \sim \mathcal{N}(0, \sighat^2)$.
        Given a fixed $\mathbf{y} \in \mathbb{R}^d$ that represents the latent vector associated to the center of our star, we further get that for any fixed $\mathbf{y}$ and over the randomness of $\mathbf{x} \sim \mathcal{N}(0, \mathbf{I}_d)$, we get $\frac{1}{\sqrt{d}} \langle \mathbf{y}, \mathbf{x} \rangle \sim\mathcal{N}(0, s(\mathbf{y})^2 )$ over $\mathbf{x} \sim \mathcal{N}(0, \mathbf{I}_d)$, where $s(\mathbf{y}) \coloneqq \sqrt{\frac{1}{d} \langle \mathbf{y}, \mathbf{y} \rangle}$. Hence, by \Cref{lem:divergenceofgaussianswithdifferentvariances}, we get that whenever $|s(\mathbf{y})^2 - 1| \le \delta$ for some small constant $\delta$, then since $\Phi_{\sighat}(\tau)) = p$,
        $$
            \left| \Expected{ \SW{K_{1, \ell}} \mid \mathbf{y} }\right| = \big| \Phi_{s(\mathbf{y})}(\tau)  - \Phi_{\sighat}(\tau))\big|^\ell \le C^\ell| s(\mathbf{y})^2 - \sighat^2 |^\ell.
        $$
        From a Bernstein bound, we get that there is a constant $c > 0$ such that 
        $
            \Prnop{ |s(\mathbf{y})^2 - 1| \ge t } \le 2 \exp(-cd t^2 ).
        $ Because $|\sighat^2 - 1| \le C/\sqrt{d}$ by \Cref{lem:sigbound}, we hence get
        \begin{align*}
            \Pr{ |s(\mathbf{y})^2 - \sighat^2| \ge t } &\le  \Pr{ |s(\mathbf{y})^2 - 1| \ge t - \tfrac{C}{\sqrt{d}} } \le 2 \exp \left(-cd \big(t - C/\sqrt{d}\big)^2 \  \right) = 2\exp \left(-c \big(t\sqrt{d} - C\big)^2   \right).
        \end{align*}
        With this, integration by tails yields  
        \begin{align*}
            \left| \Expected{ \SW{K_{1, \ell}}} \right| &\le \left| \Expected{ \SW{K_{1, \ell}} \mid |s(\mathbf{y})^2 - 1| \le 
            \delta}\right| + \Pr{|s(\mathbf{y})^2 - 1| \ge \delta } \\&\le \frac{1}{\Pr{|s(\mathbf{y})^2 - 1| \le 
            \delta}} \int_{0}^{\delta} \Pr{ (C| s(\mathbf{y})^2 - \sighat^2 |)^\ell  > t } \d t  + \exp( - c \delta^2 d ) \\
            &\le 2 \int_{0}^{\infty} 2 \exp\left(-c \left(\frac{t^{1/\ell}\sqrt{d}}{C} - C\right)^2 \right) \d t + \exp( - c \delta^2 d ).
        \end{align*}
        Now, since for $t \ge (2C^2/\sqrt{d})^{\ell} \eqqcolon a$, we have $(t^{1/\ell}\sqrt{d}/C) - C \ge \frac{1}{2} (t^{1/\ell}\sqrt{d}/C)$, we get 
        \begin{align*}
            \int_{0}^{\infty} 2 \exp\left(-c \left(\frac{t^{1/\ell}\sqrt{d}}{C} - C\right)^2 \right) \d t &\le 2a + \int_{0}^{\infty} 2 \exp\left(- \frac{c}{4 C^2} t^{2/\ell}d \right) \d t\\
            &\le 2a + 2 \left( \frac{4C^2}{c}\frac{1}{d} \right)^{\ell/2} \int_{0}^{\infty} s^{\frac{\ell-2}{2}} e^{-s} \d s = 2a + 2 \left( \frac{4C^2}{c}\frac{1}{d} \right)^{\ell/2} \Gamma\left( \frac{\ell}{2} \right).
        \end{align*} Hence, in total there is a constant $C' \ge 0$ such that 
        \begin{align*}
             \left| \Expected{ \SW{K_{1, \ell}}} \right| \le \left( \frac{C' \ell}{d} \right)^{\ell/2} + e^{-d/C'}.
        \end{align*} We now claim that there is a constant $C > 0$ such that the above is at most $(C\ell / d)^{\ell/2}$ for all $\ell$. To this end, it suffices to show that for all $\ell \ge 1$,
        \begin{align*}
            e^{-d/C'} \le \left( \frac{C \ell}{d} \right)^{\ell/2} \Leftrightarrow \exp \left( \log\left( \frac{d}{C\ell} \right)\frac{\ell}{2} - \frac{d}{C'} \right) \le 1 \Leftrightarrow \log\left( \frac{d}{C\ell} \right)\frac{\ell}{2} \le \frac{d}{C'} \Leftrightarrow \log\left( \frac{d}{C\ell} \right)\frac{C\ell}{2d} \le 1,
        \end{align*}
        which is true for all $\ell$ once $C$ is large enough.
    \end{proof}

\begin{proof}[Proof of \Cref{clm:variance}]
    Assume for now that $k$ is even and consider $$
    f_{\alpha}(\xrnum{1}, \xrnum{2}) = m \left( \frac{ 2 q^{2} }{(p(1-p))}\right)^{k} \sum_{\alpha \subseteq R, |\alpha| = k} \leadingnum{1}\leadingnum{2}$$. Since $\Expected{ f_{\alpha}(\xrnum{1}) }= 0$ by definition of $\leading$, we get that 
    \begin{align*}
        \sigma^2_{f_{\alpha}}(\xrnum{2})) = \Expectedsubb{\xrnum{1}}{ f_{\alpha}(\xrnum{1}, \xrnum{2})^2 } =  m^2 \left( \frac{ 2 q^{2} }{(p(1-p))}\right)^{2k} \sum_{\substack{ \alpha_1, \alpha_2 \subseteq R\\ |\alpha_1| = |\alpha_2| = k}} \Expectedsubb{\xrnum{1}}{ \leadingnumalpha{1}{\alpha_1} \leadingnumalpha{1}{\alpha_2} } \leadingnumalpha{2}{\alpha_1} \leadingnumalpha{2}{\alpha_2}.
    \end{align*} 
    Recalling the definition of $\leading$, it is easy to see that the expectation in the above sum is $0$ whenever $\alpha_1 \neq \alpha_2$. This is because when expanding $\leadingnum{1}, \leadingnum{2}$, all the $r_j$ in every term must correspond to disjoint pairs of elements in $\alpha_1$ and $\alpha_2$, respectively, because otherwise (as $k$ is even and there are $\ell = k/2$ tuples $r_j$) we could not satisfy the coverage constraint $\supp(r_1, \ldots, r_\ell) = \alpha$. Hence any latent vector $\mathbf{x}_u$ for $u \in \alpha_1 \triangle \alpha_2$\footnote{$\triangle$ denotes the symmetric difference} appears only in one inner product corresponding to exactly one $r_j$, and sets the entire  expression  to zero after taking the expectation.
    Therefore,
    \begin{align*}
        \sigma^2_{f_{\alpha}}(\xrnum{2})) = m^2 \left( \frac{ 2 q^{2} }{(p(1-p))}\right)^{2k} \sum_{\substack{ \alpha \subseteq R\\ |\alpha| = k}} \Expectedsubb{\xrnum{1}}{ \leadingnumalpha{1}{\alpha}^2 } \leadingnumalpha{2}{\alpha}^2.
    \end{align*}
    Moreover,  
    \begin{align}\label{eq:variancelambda}
        &\Expectedsubb{\xrnum{1}}{ \leadingnumalpha{1}{\alpha}^2 } = \frac{ 1 }{2^{2\ell} (\ell!)^2 } \sum_{ \substack{ r_1, \ldots r_{\ell} \in \alpha\times \alpha \\ \supp(r_1, \ldots, r_\ell) = \alpha }} \sum_{ \substack{ r_1', \ldots r_{\ell}' \in \alpha\times \alpha \\ \supp(r_1', \ldots, r_\ell') = \alpha }} \ \Bigg( \prod_{e \in \alpha}\phi^{(s_e -1)}(\tau) \Bigg) \Bigg( \prod_{e \in \alpha}\phi^{(s_e' -1)}(\tau) \Bigg) \nonumber \\
        & \hspace{4cm} \times \Expectedsubb{\xr}{ \prod_{j = 1}^\ell \Big( \frac{1}{d} \big\langle \mathbf{x}_{r_j(1)} ,\mathbf{x}_{r_j(2)} \big\rangle - \sighat^2 \mathbf{I}_{r_j(1),r_j(2)}\Big) \Big( \frac{1}{d} \big\langle \mathbf{x}_{r_j'(1)} ,\mathbf{x}_{r_j'(2)} \big\rangle - \sighat^2 \mathbf{I}_{r_j'(1),r_j'(2)}\Big) }.
    \end{align}
    Regarding the expectation, given any fixed $r_1, \ldots, r_\ell$ and $r_1', \ldots, r_\ell'$, we wish to integrate by tails and we use the fact that the integrand is a product of $2\ell = k$ random variables $X_1, \ldots, X_k$ where each $X_i$ has the form 
    $$
        X_i = \tfrac{1}{d} \big\langle \mathbf{x}_{\zeta(1)} ,\mathbf{x}_{\zeta(2)} \big\rangle - \sighat^2 \mathbf{I}_{\zeta(1),\zeta(2)}
    $$ where $\zeta$ is one of the $r_1, \ldots, r_\ell, r_1', \ldots, r_\ell'$. Now, we bound 
    \begin{align*}
        \Expected{{\prod}_{j=1}^k X_j} \le \Expected{{\prod}_{j=1}^k |X_j| } \le \Expectednop{Y^k } \text{ for } Y \coloneqq \max_{j \in [k]} |X_j|.
    \end{align*} 
    Now, it is not hard to see that by standard Chernoff/Bernstein bounds, we have 
    $$
        \Pr{ |X_i| \ge t } \le 2 \exp\left( -cd \big( t - C/\sqrt{d} \big)^2 \right)
    $$ where $c > 0$ is a constant\footnote{Note that we use $t - C/\sqrt{d}$ instead of just $t$ in the exponent since the individual $X_i$ are only zero in expectation if $\zeta(1) \neq \zeta(2)$. Otherwise (if $\zeta(1) = \zeta(2)$), the expectation is at most $C/\sqrt{d}$ in absolute value, as follows from \Cref{lem:sigbound}. This allows us to use $t - C/\sqrt{d}$ in the exponent. }. Using a union bound, this implies 
    \begin{align*}
        \Pr{ Y \ge t } \le 2k \exp( -cd t^2 ).
    \end{align*} Now, integrating by tails and using that $t - C/\sqrt{d} \ge t/2 $ for $t \ge 2C/\sqrt{d}$, we get that 
    \begin{align*}
        \Expectednop{Y^k } = \int_{0}^\infty  \Pr{Y \ge t^{1/k}} \d t&\le \left(\frac{2C}{\sqrt{d}}\right)^{k} + 2k \int_{0}^\infty  \exp( -cd \big( t/2 \big)^{2/k} ) \d t \\ 
        & = \left(\frac{2C}{\sqrt{d}}\right)^{k} + 2k^2 \left( \frac{1}{cd} \right)^{\frac{k}{2}} \int_{0}^\infty  \exp( -s ) s^{\frac{k-2}{2}} \d t
        \le \left(\frac{2C}{\sqrt{d}}\right)^{k} + 2k^2 \left( \frac{1}{cd} \right)^{\frac{k}{2}} \Gamma\left( \frac{k}{2} \right) \le k^2 \left( \frac{C k}{\sqrt{d}} \right)^k,
    \end{align*} for some constant $C > 0$. 

    This yields a bound on the expectation in \eqref{eq:variancelambda}. 
    To further simplify the terms in front of the expectation, we note that $s_e = s_s' = 1$ for all $\alpha$ since every $e \in \alpha$ is covered exactly once (because $k $ is even). Moreover, each sum has at most $k!$ terms. Using Stirling's approximation for all the factorials, we then get that there is a constant $C> 0$ such that 
    \begin{align*}
        &\Expectedsubb{\xrnum{1}}{ \leadingnumalpha{1}{\alpha}^2 } \le \left( Ck \right)^k k^2 \left( \frac{C k}{\sqrt{d}} \right)^k.
    \end{align*}
    Combining this bound with the bounds on $\leadingnum{2}$ from \Cref{prop:fourierexpressionthing}, we get the total variance bound 
    \begin{align*}
        \sigma^2_{f_{\alpha}}(\xrnum{2})) \le m^2  q^{4k} \binom{n}{k} \ \left( Ck \right)^k k^2 \left( \frac{C k}{\sqrt{d}} \right)^k  \Bigg( \frac{C k \log(n)^{\frac{1}{2}}}{\sqrt{d}} \Bigg)^{k} \le m^2 n^k q^{4k} k^2 \Bigg( \frac{C' k \log(n)^{\frac{1}{2}}}{\sqrt{d}} \Bigg)^{2k},
    \end{align*} as desired.

    It only remains to handle the case of $k$ being odd. To this end, recall the expansion of the variance used at the beginning, i.e., 
    \begin{align*}
        \sigma^2_{f_{\alpha}}(\xrnum{2})) =  m^2 \left( \frac{ 2 q^{2} }{(p(1-p))}\right)^{2k} \sum_{\substack{ \alpha_1, \alpha_2 \subseteq R\\ |\alpha_1| = |\alpha_2| = k}} \Expectedsubb{\xrnum{1}}{ \leadingnumalpha{1}{\alpha_1} \leadingnumalpha{1}{\alpha_2} } \leadingnumalpha{2}{\alpha_1} \leadingnumalpha{2}{\alpha_2}.
    \end{align*}
    The main difference to the case where $k$ is even is that now, not all terms for which $\alpha_1 \neq \alpha_2$ are zero. This is because now, we have $\lceil k/2 \rceil > k/2$ tuples $r_j$ at disposition when expanding the above expectation. Hence, in case $|\alpha_1 \triangle \alpha_2| = 2$ with elements $u_1 \in \alpha_1\setminus\alpha_2$ and $u_2 \in \alpha_2\setminus\alpha_1$, we can set some $r_j = (u_1, u_1)$ and some $r_j' = (u_2, u_2)$ while using the remaining $r_j$ and $r_j'$ to cover the remaining (evenly many) elements in $\alpha_1 \cap \alpha_2$. In this case, the expectation would not be zero. 

    However, we can still guarantee that the expectation is zero whenever $|\alpha_1 \triangle \alpha_2| \ge 4$, since this property guarantees that for at least one of the $u \in \alpha_1 \triangle \alpha_2$, the latent vector $\mathbf{x}_u$ only appears in once in the inner products, otherwise the coverage constraint $\supp(r_1, \ldots, r_\ell)$ would be impossible to satisfy. Hence, in this case, we can bound 
    \begin{align*}
        \sigma^2_{f_{\alpha}}(\xrnum{2})) = m^2 \left( \frac{ 2 q^{2} }{(p(1-p))}\right)^{2k} \sum_{\substack{ \alpha_1, \alpha_2 \subseteq R\\ |\alpha_1| = |\alpha_2| = k \\ |\alpha_1 \triangle \alpha_2| \le 2}} \Expectedsubb{\xrnum{1}}{ \leadingnumalpha{1}{\alpha_1}\leadingnumalpha{1}{\alpha_2} } \leadingnumalpha{2}{\alpha_1}\leadingnumalpha{2}{\alpha_2}.
    \end{align*}
    Now, each of the expectations appearing above, and the terms $\leadingnumalpha{2}{\alpha_1}\leadingnumalpha{2}{\alpha_2}$ can be bounded as before. This yields 
    \begin{align*}
        \sigma^2_{f_{\alpha}}(\xrnum{2})) \le m^2  q^{4k} n \binom{n}{k} \ \left( Ck \right)^{2\ell} k^2 \left( \frac{C k}{\sqrt{d}} \right)^{2\ell} \Bigg( \frac{C k \log(n)^{\frac{1}{2}}}{\sqrt{d}} \Bigg)^{2\ell} \le m^2 n^{k+1} q^{4k}  k^3\Bigg( \frac{C' k \log(n)^{\frac{1}{2}}}{\sqrt{d}} \Bigg)^{4\ell},
    \end{align*} as desired (note that $k + 1 = 2\ell$ for odd $k$, and that the extra factor of $nk$ above accounts for the number of possible $\alpha_2$ given a fixed $\alpha_1$). \qedhere

\end{proof}

\section{Algorithmic upper bounds }\label{sec:upperbounds}

\newcommand{\Varsub}[2]{ \underset{#1}{ \mathbb{V}\text{ar} } \left[ #2 \right] }
It remains to argue about lower bounds on $\dtv{\cdot}{\cdot}$ by giving efficient tests.
\begin{theorem}[Signed four-cycles]
    Consider any fixed $p \in (0,1)$. Then, counting signed four-cycles distinguishes $\matnoiseedge$ from $\gaussmat$ whenever $\log(n)^3 \ll d \ll \boundlower$. The same holds for distinguishing $\matnoiseknown$ from $\gaussmat$ if $\ \log(n)^3 \ll d \ll \boundknownlower$ and if we constrain the signed four-cycles to be within the mask.
\end{theorem}
\begin{proof}
    We argue via a simple application of Chebyshev's inequality. To this end, denote by $$
        C_4(M) = \sum_{i,j \in [n], k, \ell \in [m]} (M_{i, k} - p)(M_{j, k} - p)(M_{i, \ell} - p)(M_{j, \ell} - p)
    $$ the signed-four cycle count of a matrix $M$. To bound the expectation of $C_4(\matnoiseedge)$ from below, we apply \Cref{prop:fourierexpressionthing} and condition on the event that the right sided latent vectors are in $\Seventshort$ for $\rho = C \log(n)^{1/2}$ such that the error arising from conditioning is at most $n^{-10}$. Then, applying \Cref{prop:fourierexpressionthing} and averaging over the leading term $\leading$, we get that there is a constant $C > 0$ such that for all $d \gg \log(n)^3$,
    \begin{align*}
        \Expected{C_4(\matnoiseedge)} = q^4 \Expected{\SW{C_4}} \ge \frac{C q^4 }{d}.
    \end{align*}
    
    Similarly, we can estimate the variance of $C_4(M)$ for $M \sim \matnoiseedge$. To this end, expand 
    $$
        \Varsub{M \sim \matnoiseedge}{ C_4(M) } = \sum_{\alpha_1, \alpha_2} \Expected{ \prod_{e \in \alpha_1} (M_e - p)  \prod_{e \in \alpha_2} (M_e - p) } - \Expectedsub{M \sim \matnoiseedge}{ C_4(M) }^2, 
    $$ the sum goes over all $\alpha_1, \alpha_2 \in K_{n, m}$ isomorphic to a four-cycle $C_4$. Conditioning on the appropriate vertices on the left or the right and applying \Cref{prop:fourierexpressionthing}, we can consider every possible overlap of $\alpha_1, \alpha_2$, and explicitly bound the contribution of the corresponding terms to the variance. To this end, we will condition either the right or left sided latent vectors to be in $\Seventshort$ for $\rho = C \log(n)^{1/2}$ such that the error arising from conditioning is at most $n^{-10}$, which is negligible in all terms. Then, we can apply \Cref{prop:fourierexpressionthing} and exploit the symmetries arising from the leading terms $\leading$ like we did in \Cref{sec:ponehalfcase} to zero-out some of the leading terms. 
    \begin{enumerate}
        \item If $|V(\alpha_1) \cap V(\alpha_2)| = 0$, then the contribution to the variance is zero.
        \item If $|V(\alpha_1) \cap V(\alpha_2)| = 1$, then the contribution is at most $
        n^3m^4 q^8 \left( C\log(n)/d \right)^2$.
        \item If $|V(\alpha_1) \cap V(\alpha_2)| = 2$, then the contribution is at most $
        n^3m^3 q^6 \left( C\log(n)/d \right)^{2}$.
        \item If $|V(\alpha_1) \cap V(\alpha_2)| = 3$, then the contribution is at most $
        n^2m^3 q^5 C\log(n)/d$.
        \item If $|V(\alpha_1) \cap V(\alpha_2)| = 4$, then the contribution is at most $
        n^2m^2$.
    \end{enumerate}
    Moreover, it is not hard to see that $\Varsub{M \sim \matnoiseedge}{ C_4(M) } = n^2m^2$.
    Combining all this, we get that, whenever $d \ll \boundlower$, then 
    $$
        n^2m^2\Expected{C_4(\matnoiseedge)} \gg  \sqrt{\max \left\{ \Varsub{M \sim \matnoiseedge}{ C_4(M) } , \Varsub{M \sim \gaussmat}{ C_4(M) }  \right\}},
    $$
    as desired. Regarding $\matnoiseknown$, the analysis is similar with the difference that the variance is lower in terms of its dependence on $q$. In particular, the factor of $q^5$ for $|V(\alpha_1) \cap V(\alpha_2)| = 3$ turns into $q^7$ and in the $|V(\alpha_1) \cap V(\alpha_2)| = 4$, the contribution is at most $n^2q^2$ for both $\matnoiseknown$ and $\gaussmat$. These changes then yield the lemma using otherwise the same arguments as above. 
\end{proof}

For the $p \neq \frac{1}{2}$ case, the following shows that signed wedges provide an efficient test for $d \ll \boundwedgeslower$ and $d \ll \boundwedgesknownlower$, respectively.
\begin{theorem}[Signed wedges]
    Consider any fixed $p \in (0,1), p \neq \frac{1}{2}$. Then, counting signed wedges distinguishes $\matnoiseedge$ from $\gaussmat$ whenever $\log(n)^3 \ll d \ll \boundwedgeslower$. The same holds for distinguishing $\matnoiseknown$ from $\gaussmat$ if $\ \log(n)^3 \ll d \ll \boundwedgesknownlower$ and if we constrain the signed four-cycles to be within the mask.
\end{theorem}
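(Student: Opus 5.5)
We will use a Chebyshev argument for the statistic $P_2(M)=\sum_{i\in[n]}\sum_{k<\ell}(M_{i,k}-p)(M_{i,\ell}-p)$, where $i$ indexes the $n$ rows and $k<\ell$ range over pairs of the $m$ columns. In the known-mask case we restrict the sum to pairs with $\mathbf{M}_{i,k}=\mathbf{M}_{i,\ell}=1$. We do not apply \Cref{prop:fourierexpressionthing} here. Instead we use the explicit one-dimensional computation already present in the proof of \Cref{clm:stars}.

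\textbf{Expectation.} Under $\gaussmat$ the mean is $0$. Under $\matnoiseedge$, each wedge with centre $i$ (the row vertex) survives re-randomisation with probability $q^2$. Conditionally on $\mathbf{x}_i=\mathbf{y}$, its two edges are independent, since $\langle \mathbf{y},\mathbf{x}_k\rangle$ and $\langle \mathbf{y},\mathbf{x}_\ell\rangle$ are independent. Hence
\[
\Expected{(M_{i,k}-p)(M_{i,\ell}-p)} = q^2\,\Expectednop{(\Phi_{s(\mathbf{y})}(\tau)-p)^2},
\]
where $s(\mathbf{y})^2=\frac1d\|\mathbf{y}\|^2$ and $\Phi_{\sighat}(\tau)=p$. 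For $p\neq\frac12$, $\tau$ is bounded away from $0$. By \Cref{lem:divergenceofgaussianswithdifferentvariances} and \Cref{lem:sigbound}, $|\Phi_{s(\mathbf{y})}(\tau)-p|\ge c|s(\mathbf{y})^2-\sighat^2|$ whenever $|s(\mathbf{y})^2-1|$ is small. The quantity $s(\mathbf{y})^2-1$ is a normalised $\chi^2_d$ fluctuation of order $1/\sqrt d$, so with constant probability $|s(\mathbf{y})^2-\sighat^2|\ge c'/\sqrt d$. This gives $\Expected{P_2}\ge c\,nm^2q^2/d$ in the unknown-mask case. In the known-mask case, conditioning on $\mathbf{M}$ removes the factor $q^2$ and leaves $\approx nm^2q^2$ masked wedges, so the mean is $\approx c\,nm^2q^2/d$ with high probability over $\mathbf{M}$.

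\textbf{Variance.} We expand $\Var(P_2)$ as a sum over pairs of wedges $(\alpha_1,\alpha_2)$ and classify them by overlap.

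\emph{Disjoint pairs, or pairs sharing only a column vertex $k$.} Here we condition on all row vectors. Since the row vectors are independent, the two signed weights are independent unless the wedges share $i$. If they share only a column vertex, we instead condition on the column vectors. Then, using $\Expected{\SW{K_{1,1}}\mid \mathbf{x}_k}=0$ exactly (the marginal is exactly $\Bern{p}$ given $\mathbf{x}_k$ by rotation invariance), the covariance vanishes. So pairs sharing only column vertices contribute at most through higher-order dependencies, which we bound by $0$.

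\emph{Same centre $i$, disjoint leaves.} These contribute $\Expectednop{(\Phi_{s}-p)^4}q^4-(\cdot)^2\lesssim nm^4q^4/d^2$.

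\emph{Same centre, one shared leaf.} These contribute $\lesssim nm^3q^3/d$. This uses that $\Expected{(M_{i,k}-p)^2(M_{i,\ell}-p)(M_{i,\ell'}-p)}\lesssim q^3/d$.

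\emph{Identical wedges.} These contribute $\approx nm^2$.

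There are further terms where the centre of one wedge is a leaf-neighbour of the other, for example wedge $(i;k,\ell)$ and wedge $(j;k,\ell')$. These share the column $k$ but have different centres. Conditional on columns, the edges at different rows are independent, and each has conditional mean $p$ given $\mathbf{x}_k$ alone. Expanding shows that the covariance reduces to a signed weight of a path with a leaf. Bounding it via \Cref{cor:sw}-type estimates gives a total of order $n^2m^3q^4\cdot\mathrm{polylog}/d^{2}$. We expect this to be the main obstacle: these cross terms need the $\log(n)^3\ll d$ assumption and careful use of conditioning on one side.

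\textbf{Conclusion.} Chebyshev succeeds if $(nm^2q^2/d)^2$ dominates each variance term. Comparing against the identical-wedge term $nm^2$ requires $d\ll \sqrt n\, m q^2=\boundwedgeslower$. The same-centre terms then give, for example, $nm^3q^3/d\ll n^2m^4q^4/d^2$, which is equivalent to $d\ll nmq$. This follows from the previous condition, since $\sqrt n\, m q^2\le nmq$. The null variance is also $\approx nm^2$.

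For the known-mask version, the mean is $nm^2q^2/d$ and the variance is $\approx nm^2q^2$, so the condition becomes $d\ll \sqrt n\, m q=\boundwedgesknownlower$. Finally, we check concentration of the number of masked wedges over $\mathbf{M}$ to complete the known-mask case.
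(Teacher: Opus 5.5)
There is a genuine gap in your variance bound. Your Chebyshev strategy is sound. So is your mean computation $\Expected{(M_{i,k}-p)(M_{i,\ell}-p)}=q^2\,\Expectednop{g(\mathbf{x}_i)^2}\gtrsim q^2/d$, with $g(\mathbf{y}):=\Phi_{s(\mathbf{y})}(\tau)-p$. The identical-wedge, same-centre/disjoint-leaves and null-variance terms are also fine.

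The gap is the class of pairs with distinct centres $i\neq j$ sharing a column $k$, i.e.\ $(i;k,\ell)$ and $(j;k,\ell')$. There are order $n^2m^3$ such pairs.

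\begin{itemize}
\item You first dismiss this class using ``$\Expected{\SW{K_{1,1}}\mid\mathbf{x}_k}=0$ exactly''. This is false for $p\neq\frac12$. Given $\mathbf{x}_k$, $d^{-1/2}\langle\mathbf{x}_i,\mathbf{x}_k\rangle\sim\mathcal{N}(0,s(\mathbf{x}_k)^2)$. Rotation invariance removes the dependence on the direction of $\mathbf{x}_k$ but not on its length, so the conditional mean of the centred edge is $g(\mathbf{x}_k)\neq0$. This is exactly the effect your mean computation relies on.
\item The same false identity silently discards the pairs $(i;k,\ell),(j;k,\ell)$, which form a four-cycle. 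Their covariance is of order $q^4/d$. This is harmless, since \Cref{cor:sw} bounds it by $O(q^4\log d/d)$ and $d\log d\ll m^2$, but it must be counted.
\item When you return to the shared-column class, you bound it ``via \Cref{cor:sw}-type estimates'' by $n^2m^3q^4\,\mathrm{polylog}/d^2$. But \Cref{cor:sw} applied to a four-edge path only gives $O(\log d/d)$, i.e.\ a total of $n^2m^3q^4\log d/d$. This is $o\big((nm^2q^2/d)^2\big)$ only if $d\log d\ll m$. That fails inside the allowed range, e.g.\ for $q$ constant, $n=m$ and $n\ll d\ll n^{3/2}$.
\end{itemize}
So the variance bound for the class you yourself flag as the main obstacle is not established.

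A short repair is to condition on the two centres. The columns $k,\ell,\ell'$ are then independent, so the covariance of such a pair is $q^4\big(\Expectednop{g(\mathbf{x}_i)g(\mathbf{x}_j)h(\mathbf{x}_i,\mathbf{x}_j)}-\Expectednop{g(\mathbf{x}_i)^2}^2\big)$. Here $h(\mathbf{x}_i,\mathbf{x}_j)$ is the conditional expected signed weight of the two edges at $k$. By H\"older with exponents $(4,4,2)$, $|\Expectednop{g(\mathbf{x}_i)g(\mathbf{x}_j)h}|\le\Expectednop{g^4}^{1/2}\Expectednop{h^2}^{1/2}$.
\begin{itemize}
\item $\Expectednop{g^4}$ is the expected signed weight of $K_{1,4}$, which is $O(d^{-2})$ by \Cref{clm:stars}.
\item $\Expectednop{h^2}$ is the expected signed weight of a four-cycle, which is $O(\log d/d)$ by \Cref{cor:sw}.
\end{itemize}
Each covariance is therefore $O(q^4\sqrt{\log d}\,d^{-3/2})$. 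The total $n^2m^3q^4\sqrt{\log d}\,d^{-3/2}$ is $o\big((nm^2q^2/d)^2\big)$ because $d\ll\sqrt n\,mq^2\le m^{3/2}$. The same bound works for the known-mask count.

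Your claimed $d^{-2}$ also holds. The leading term of $h$ is a multiple of $\langle\mathbf{x}_i,\mathbf{x}_j\rangle/d$, which is odd under $\mathbf{x}_j\mapsto-\mathbf{x}_j$, while $g$ depends only on norms. So the leading term integrates to zero; this is the symmetry-based cancellation the paper invokes.

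Finally, a minor slip: in the unknown-mask case the same-centre/one-shared-leaf term is of order $nm^3q^2/d$, not $nm^3q^3/d$. This is because $\Expectednop{(M_{i,k}-p)^2\mid\mathbf{x}_i}=p(1-p)+q(1-2p)g(\mathbf{x}_i)$ whether or not the entry is re-randomised; $q^3$ is the known-mask count. The resulting requirement $d\ll nmq^2$ is still implied by $d\ll\sqrt n\,mq^2$, so your conclusion survives.
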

\begin{proof}
    Denote by $P_2(M) = \sum_{i, j \in [m], \ell \in [n]} (M_{\ell, i} - p)(M_{\ell, j} - p)$ the signed-wedge count of a matrix $M$. To bound the expectation of $P_2(\matnoiseedge)$ from below, we apply \Cref{prop:fourierexpressionthing} while conditioning latent vector of center vertex $\ell$ to be in $\Seventshort$ for $\rho = C \log(n)^{1/2}$ such that the error arising from conditioning is at most $n^{-10}$. Then, applying \Cref{prop:fourierexpressionthing} and averaging over the leading term $\leading$, we get that there is a constant $C > 0$ such that for all $d \gg \log(n)^3$,
    \begin{align*}
        \Expected{C_4(\matnoiseedge)} = q^2 \Expected{\SW{P_4}} \ge \frac{C q^2 }{d}.
    \end{align*}
    Computing the variance as before, we get the following contributions for each overlap $|V(\alpha_1) \cap V(\alpha_2)|$.
    \begin{enumerate}
        \item If $|V(\alpha_1) \cap V(\alpha_2)| = 0$, then the contribution to the variance is zero.
        \item If $|V(\alpha_1) \cap V(\alpha_2)| = 1$, then the contribution is at most $
        nm^4 q^4 \left( C\log(n)/d \right)^2$.
        \item If $|V(\alpha_1) \cap V(\alpha_2)| = 2$, then the contribution is at most $
        nm^3 q^2 \log(n) /d$.
        \item If $|V(\alpha_1) \cap V(\alpha_2)| = 3$, then the contribution is at most $
        nm^2$.
    \end{enumerate}
    Moreover, it is not hard to see that $\Varsub{M \sim \matnoiseedge}{ C_4(M) } \le nm^2$.
    Combining all this, we get that, whenever $\log(n)^3 \ll d \ll \boundwedgeslower$, then 
    $$
        nm^2\Expected{P_2(\matnoiseedge)} \gg \sqrt{\max \left\{ \Varsub{M \sim \matnoiseedge}{ P_2(M) } , \Varsub{M \sim \gaussmat}{ P_2(M) }  \right\}},
    $$
    as desired. Regarding $\matnoiseknown$, the analysis is similar with the difference that the variance is lower in terms of its dependence on $q$. In particular, the factor of $q^2$ for $|V(\alpha_1) \cap V(\alpha_2)| = 2$ turns into $q^3$, and in case $|V(\alpha_1) \cap V(\alpha_2)| = 3$, the contribution is at most $nm^2q^2$ for both $\matnoiseknown$ and $\gaussmat$. These changes then yield the lemma using otherwise the same arguments as above. 
\end{proof}

\end{document}